\DeclareMathOperator{\sgn}{sgn}
\def\XXint#1#2#3{{\setbox0=\hbox{$#1{#2#3}{\int}$}
     \vcenter{\hbox{$#2#3$}}\kern-.5\wd0}}
\def\rddots{\cdot^{\cdot^{\cdot}}}
\newtheorem{proposition}{Proposition}[section]             
\newtheorem{lemma}[proposition]{Lemma}
\newtheorem{theorem}[proposition]{Theorem}
\newtheorem{corollary}[proposition]{Corollary}
\newtheorem{remark}[proposition]{Remark}
\newtheorem{example}[proposition]{Example}
\newtheorem{definition}{Definition}[section]
\newcommand{\reals}{\mathbb{R}}
\newcommand{\A}{\mathcal{A}}
\newcommand{\cB}{\mathcal{B}}
\newcommand{\B}{\mathcal{B}}
\newcommand{\C}{\mathcal{C}}
\newcommand{\N}{\mathcal{N}}
\newcommand{\HH}{\mathcal{H}}
\newcommand{\J}{\mathcal{J}}
\newcommand{\cP}{\mathcal{P}}
\newcommand{\Q}{\mathcal{Q}}
\newcommand{\R}{\mathcal{R}}
\newcommand{\cS}{\mathcal{S}}
\newcommand{\T}{\mathcal{T}}
\newcommand{\W}{\mathcal{W}}
\newcommand{\Y}{\mathcal{Y}}
\newcommand{\ccC}{\mathscr{C}}
\newcommand{\oast}{\circledast}
    \renewcommand\section{\@startsection{section}{1}{\z@}               
                                      {-3.5ex \@plus -1ex \@minus -.2ex}%
                                      {2.3ex \@plus.2ex}                %
                                      {\normalfont\bfseries}}           %
\begin{document}


\title{Refined Inertia of Matrix Patterns}
\thanks{Research supported in part by an NSERC USRA (Earl), as well as NSERC Discovery Grants 203336 (Vander Meulen) 
and 2014-03898 (Van Tuyl).}

\author{Jonathan Earl}
\address{Department of Mathematics and Statistics,
McMaster University, Hamilton, ON, L8S 4L8, Canada}
\email{earlj@math.mcmaster.ca}

\author{Kevin Vander Meulen}
\address{Department of Mathematics, Redeemer University College, Ancaster, ON, L9K 1J4, Canada}
\email{kvanderm@redeemer.ca}

\date{\today}

\author{Adam Van Tuyl}
\address{Department of Mathematics and Statistics,
McMaster University, Hamilton, ON, L8S 4L8, Canada}
\email{vantuyl@math.mcmaster.ca}

\keywords{spectrally arbitrary pattern, refined inertia, inertially arbitrary pattern}
\subjclass[2010]{15A18, 15A29, 15B35}

\begin{abstract}
We explore how the 
combinatorial arrangement of prescribed zeros in a matrix
affects the possible eigenvalues that the matrix can obtain.
We demonstrate that there are inertially arbitrary patterns 
having a digraph with no $2$-cycle, unlike what happens for nonzero patterns.
We develop a class of patterns that are refined inertially arbitrary
but not spectrally arbitrary, making use of the property of a properly signed nest. 
We include a characterization of the
inertially arbitrary and refined inertially arbitrary patterns of order three,
as well as the patterns of order four with the least number of nonzero entries.
\end{abstract}

\maketitle

\section{Introduction}

Since the concepts were introduced in Drew et. al.~\cite{Drew}, much work has focused on when
a sign pattern is spectrally or inertially arbitrary (for example \cite{CF, CVV, GS, KOV}). 
Various papers have also focused on the combinatorial arrangement of the nonzero
positions in a matrix when considering eigenvalue properties of patterns (for example \cite{CV4, CM, DOV}).
The paper by Cavers and Fallat~\cite{CF} reviews some of these results and also sets them
in a more general setting, initiating the study of other types of patterns. Recently, there has started to be some
consideration of zero-nonzero patterns, patterns with prescribed zero entries, nonzero entries and
entries that are unrestricted (see for example, \cite{ESV}). The concept of a refined inertially arbitrary
pattern has also recently been defined in \cite{DOV} in the context of nonzero patterns. This concept
includes patterns that allow for Hopf bifurcations \cite{BDMOV}.
In this paper, we begin an exploration of zero patterns (patterns that have prescribed zero entries, 
with the remaining entries unrestricted) that are refined inertially arbitrary, comparing them to 
other known results.
 
We first introduce the technical definitions and some lemmas in Section~\ref{defines}. We recall that a refined inertially arbitrary zero pattern requires a pair of symmetrically opposite nonzero entries, as was observed in~\cite{CF}. We then show, 
in Section~\ref{withou}, that this restriction is not required for inertially arbitrary zero patterns, unlike what happens for nonzero patterns \cite{CV4} and sign patterns \cite{CV}.

In \cite{DOV}, Deaett et. al. present an irreducible nonzero pattern of order $5$ that is refined inertially arbitrary but not
spectrally arbitrary. In Section~\ref{pathsection}, we produce an infinite class of zero patterns that are refined inertially arbitrary
but not spectrally arbitrary. 

Garnett and Shader~\cite{GS} showed that a path (sign) pattern $\T_n$ having two end loops is spectrally arbitrary. 
In Section~\ref{supersection}, using the technique from \cite{GS}, we show that there are other zero patterns corresponding to a path with two loops that are spectrally arbitrary. While $\T_n$ has a signing that is spectrally arbitrary, there is no signing of the nonzero entries of the path pattern introduced in Section~\ref{supersection} that is spectrally arbitrary.

In Section~\ref{order3}, we characterize the zero patterns of order $3$ that are refined inertially arbitrary and inertially
arbitrary, comparing them to the known spectrally arbitrary patterns. 
We say a pattern of order $n$ is \emph{sparse} if it has less than $2n$ nonzero entries.
In Section~\ref{order4} we conclude with an exploration of the sparse zero patterns of order $4$ 
determining which are refined inertially arbitrary, spectrally arbitrary, or inertially arbitrary.

\section{Technical definitions and lemmas for inertially arbitrary patterns}\label{defines} 
A \emph{sign pattern} is an order $n$ matrix with entries in $\{+,-,0\}$; a \emph{nonzero pattern} has entries in $\{ *,0 \}$. 
The \emph{qualitative class} of a sign pattern $\A$, denoted $Q(\A)$, is the set of all real matrices $A$
such that $\sgn{(A_{ij})}=\sgn{(\A_{ij})}$. Likewise the qualitative class of a nonzero 
pattern $\A$ consists of all the real matrices $A$ with $A_{ij}$ nonzero if and only if $\A_{ij}\neq 0$. 
A \emph{zero pattern} $\A$  is a matrix with entries in $\{\oast, 0\}$; $Q(\A)$ is the set of real matrices
$A$ such that $\A_{ij}=0$ implies $A_{ij}=0$. We refer to the $\oast$ entries of $\A$ as the nonzero entries
of the pattern, even though a matrix $A\in Q(\A)$ may have a zero in that position.

A pattern $\A$ \emph{realizes a polynomial} $p(x)$ if there exists a matrix $A\in Q(\A)$ such that the characteristic polynomial of $A$ is $p(x)$, and $\A$ is \emph{spectrally arbitrary} if $\A$ realizes all monic polynomials of degree $n$ with real coefficients. The \emph{inertia} of an order $n$ matrix $A$, denoted $i(A)$, is the ordered tuple $i(A) = (n_+, n_-, n_0)$ where $n_+$ (resp. $n_-, n_0$) is the number of eigenvalues of $A$ with positive real part (resp. negative and zero). 
The \emph{refined inertia} of a matrix $A$, $ri(A) = (n_+, n_-, n_z, n_i)$ includes 
$n_i$, the number of the eigenvalues of $A$ that are purely imaginary,  and $n_z$, the number of eigenvalues of $A$ that are zero.
Note that $n_z + n_i = n_0$. $\A$ can \textit{realize} an inertia (resp. refined inertia) $\mathbf{a}$ if there exists an $A\in Q(\A)$ such that $i(A) = \mathbf{a}$ (resp. $ri(A) = \mathbf{a}$). A pattern $\A$ is \emph{inertially arbitrary}  if $\A$ can realize all inertias $(n_1, n_2, n_3)$ with $n_1 + n_2 + n_3 = n$. Likewise, $\A$ is \textit{refined inertially arbitrary}  if $\A$ can realize all refined inertias $(n_1, n_2, n_3, n_4)$ where $n_1 + n_2 + n_3 + n_4 = n$. 
Note that for nonzero patterns and zero patterns,  
if $\A$ can realize the inertia $(n_+, n_-, n_0)$, then the \emph{reversal} $(n_-, n_+, n_0)$ can also be realized by $\A$ by taking the negative of the matrix used to realize $(n_+, n_-, n_0)$. Since we will be focusing on zero patterns, when considering inertias, we will restrict to inertias with $n_+ \geq n_-$.

A matrix $A$ (or pattern) of order $n$ is \emph{irreducible} if there does not exist a permutation matrix $P$, such that $PAP^{-1}$ is a block triangular matrix, with two or more non-empty diagonal blocks. We will focus on irreducible patterns since the
eigenvalues of a reducible matrix can be obtained from its irreducible blocks. As noted in the concluding comments, it may
be worth considering reducible patterns in future work.

 
Note that any pattern $\A$ of order $n$ can be represented by a corresponding digraph $D$ on $n$ vertices: 
$\A$ is the adjacency matrix of the digraph $D=D(\A)$ with vertex set $\{v_i | 1 \leq i \leq n\}$ and arc set $\{(v_i, v_j) | \A_{i, j} \neq 0\}$. Note that $\A$ is irreducible if and only if $D(\A)$ is strongly connected (see e.g. \cite[Theorem 3.2.1]{B}).
Furthermore, if $\A$ is a pattern and $\A^T$ is the transpose of $\A$ with digraphs $D$ and $D^T$, respectively, $D^T$ is obtained from $D$ by reversing all the arcs. Two patterns $\A$ and $\cB$ are \emph{equivalent} if $\cB$ can be obtained from $\A$ via permutation similarity and/or transposition. 

If $D(\A)$ has a cycle 
$(v_{i_1},v_{i_2}),(v_{i_2},v_{i_3}),\ldots,(v_{i_{k}},v_{i_1})$
and $A=[a_{ij}]\in Q(\A),$ then the associated \emph{cycle product} is $(-1)^{k-1}a_{v_{i_1},v_{i_2}}a_{v_{i_2},v_{i_3}}\cdots a_{v_{i_{k}},v_{i_1}}$. A \emph{composite} cycle of length $k$ is a set of vertex disjoint cycles with
lengths summing to $k$ (with an associated cycle product being the product of the cycle products of the individual
cycles). Note that a composite cycle $C$ could consist of only a single cycle; in this case, we sometimes
call $C$ a \emph{proper} cycle.
A $1$-cycle is an
arc from a vertex back to itself; this arc is often called a \emph{loop}.
We will use the following fact (see for example
~\cite[Section 9]{B}):
\begin{lemma}\label{cycleprod} 
Given $A\in Q(\A)$, if $E_k$ is the sum of all the composite cycle products of length $k$ in $D(\A)$,
then the characteristic polynomial of  
$A$ is 
\begin{eqnarray*}
p_A(x)=x^n-E_1x^{n-1}+E_2x^{n-2}+\cdots+(-1)^nE_n.
\end{eqnarray*}
\end{lemma}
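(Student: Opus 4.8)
The plan is to connect the two standard descriptions of the coefficients of a characteristic polynomial: the algebraic description via principal minors, and the combinatorial description via the Leibniz (permutation) expansion of a determinant. The whole lemma then reduces to matching these two descriptions coefficient by coefficient.

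First I would record the principal-minor expansion of $p_A$. Expanding $\det(xI-A)$ in powers of $x$, the coefficient of $x^{n-k}$ is $(-1)^k$ times the sum of all $k\times k$ principal minors of $A$; that is,
\begin{equation*}
p_A(x) = \det(xI - A) = \sum_{k=0}^{n} (-1)^k S_k\, x^{n-k}, \qquad S_k = \sum_{\substack{J\subseteq\{1,\dots,n\}\\ |J|=k}} \det\big(A[J]\big),
\end{equation*}
where $A[J]$ is the principal submatrix on the index set $J$ and $S_0 = 1$. Comparing this with the claimed expression $x^n - E_1 x^{n-1} + E_2 x^{n-2} + \cdots + (-1)^n E_n$ reduces the lemma to proving the single identity $S_k = E_k$ for every $k$.

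Next I would expand each principal minor by the Leibniz formula,
\begin{equation*}
\det\big(A[J]\big) = \sum_{\sigma\in \mathrm{Sym}(J)} \sgn(\sigma)\prod_{i\in J} a_{i,\sigma(i)},
\end{equation*}
and then discard the vanishing terms. A term survives only when every factor $a_{i,\sigma(i)}$ sits in a nonzero position of the pattern, i.e. only when each arc $(v_i,v_{\sigma(i)})$ lies in $D(\A)$. Decomposing such a surviving $\sigma$ into its disjoint cycles exhibits a bijection, ranging over all $J$ with $|J|=k$, between the surviving Leibniz terms and the composite cycles of length $k$ in $D(\A)$: the cycles of $\sigma$ are exactly the vertex-disjoint cycles forming a composite cycle on the vertex set $J$.

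The crux — and the step demanding care — is the sign bookkeeping. If $\sigma$ factors into disjoint cycles of lengths $k_1,\dots,k_m$ with $\sum_j k_j = k$, then $\sgn(\sigma) = \prod_{j}(-1)^{k_j-1}$, since a cycle of length $k_j$ has sign $(-1)^{k_j-1}$. The product $\prod_{i\in J} a_{i,\sigma(i)}$ likewise factors over the individual cycles, and for a single cycle on $v_{i_1},\dots,v_{i_{k_j}}$ the factor $(-1)^{k_j-1}a_{v_{i_1},v_{i_2}}\cdots a_{v_{i_{k_j}},v_{i_1}}$ is precisely the cycle product as defined above the lemma. Hence $\sgn(\sigma)\prod_{i\in J} a_{i,\sigma(i)}$ equals the composite cycle product of the corresponding composite cycle, and summing over all surviving terms gives $S_k = E_k$. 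Substituting back yields $p_A(x) = x^n - E_1 x^{n-1} + \cdots + (-1)^n E_n$, as claimed. I expect the only delicate point to be verifying that the permutation sign $\sgn(\sigma)$ and the accumulated $(-1)^{k_j-1}$ factors built into the cycle-product convention agree exactly; the remaining bookkeeping is routine.
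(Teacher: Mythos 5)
Your proof is correct. Note that the paper does not actually prove this lemma --- it is quoted as a known fact with a citation to Brualdi and Ryser --- and your argument (principal-minor expansion of $\det(xI-A)$, Leibniz expansion of each minor, cycle decomposition of the surviving permutations, and the sign identity $\sgn(\sigma)=\prod_j(-1)^{k_j-1}$ matching the $(-1)^{k_j-1}$ built into the cycle-product convention) is exactly the standard proof found in that reference, so there is nothing to reconcile.
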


\begin{example}\label{Cn}
{\rm 
The pattern
$$\C_n = \left[ \begin{array}{ccccc}
\oast 	& \oast 	& 0			& \cdots 	& 0\\
\oast 	& 0     	& \oast		& \ddots  	& \vdots\\
\oast 	& 0     	& 0       	& \ddots	& 0\\
\vdots	& \vdots	& \ddots  	& \ddots 	& \oast \\
\oast	& 0		    & \cdots  	& 0  		& 0 \end{array} \right]$$
 is known to be spectrally arbitrary for all $n\geq 2$, since it is the pattern of a companion matrix
 (see, e.g., \cite{CF}).
 One could use a matrix realization with ones on the superdiagonal, along with Lemma~\ref{cycleprod}
 to verify $\C_n$ can obtain every characteristic polynomial of degree $n$.
}\end{example} 
  We use the following lemmas to develop Theorem~\ref{RIAP3}, a main result of Section~\ref{order3}.
The first lemma was observed in~\cite[Lemma 20]{KOV}. Further necessary conditions on the coefficients
of a characteristic polynomial based on inertia can be found in~\cite[Lemma 1]{CVV}.

\begin{lemma}\cite[Lemma 20]{KOV}\label{kcycle}
If a pattern $\A$ of order $n$ is inertially arbitrary, then $D(\A)$ must have a composite
$k$-cycle for each $k$, $1\leq k\leq n$. 
\end{lemma}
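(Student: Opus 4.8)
The plan is to argue the contrapositive: assuming $D(\A)$ has no composite $k$-cycle for some fixed $k$ with $1 \le k \le n$, I would exhibit an inertia that $\A$ cannot realize, so that $\A$ fails to be inertially arbitrary. The first step invokes Lemma~\ref{cycleprod}. Because $E_k$ is defined as the sum of all composite cycle products of length $k$ in $D(\A)$, the hypothesis that no such composite cycle exists makes $E_k$ an empty sum; hence $E_k = 0$ for every $A \in Q(\A)$, and so the coefficient of $x^{n-k}$ in the characteristic polynomial $p_A(x)$ vanishes identically on the whole qualitative class.

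The second step is to pin down a concrete inertia whose realization would force that coefficient to be nonzero. I would take the inertia $(n,0,0)$, in which every eigenvalue of $A$ lies in the open right half-plane. For such an $A$, write $p_A(x) = \prod_{i=1}^{n}(x - \lambda_i)$ and consider $q(x) = \prod_{i=1}^{n}(x + \lambda_i) = \sum_{j=0}^{n} e_j(\lambda_1, \ldots, \lambda_n)\, x^{n-j}$, the polynomial obtained by negating the roots; its roots $-\lambda_i$ all lie in the open left half-plane, so $q(x)$ is Hurwitz-stable. The classical necessary condition for stability (a real stable polynomial factors into linear terms $x + r$ with $r>0$ and quadratic terms $x^2 + bx + c$ with $b,c>0$, and such factors multiply to a polynomial with strictly positive coefficients) then yields $e_j(\lambda_1, \ldots, \lambda_n) > 0$ for every $1 \le j \le n$. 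Matching Lemma~\ref{cycleprod} against $p_A(x) = \prod_{i=1}^{n}(x-\lambda_i)$ identifies $E_j$ with the elementary symmetric function $e_j(\lambda_1, \ldots, \lambda_n)$, so in particular $E_k > 0$.

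The two steps collide: realizing $(n,0,0)$ forces $E_k > 0$, while the assumption on $D(\A)$ forces $E_k = 0$ for every matrix in $Q(\A)$. Hence $\A$ cannot realize $(n,0,0)$ and is not inertially arbitrary, which is the desired contrapositive. I expect the main obstacle to be the second step, namely justifying that a spectrum confined to the right half-plane makes every $E_j$ strictly positive; everything else is bookkeeping with Lemma~\ref{cycleprod}. Once the positivity of all elementary symmetric functions of a right-half-plane spectrum is in hand (equivalently, the sign-definiteness coming from the Hurwitz necessary condition), the conclusion is immediate, and the same reasoning applied to $(0,n,0)$ via the reversal would give an alternative route if preferred.
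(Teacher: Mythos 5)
Your proposal is correct and is essentially the paper's own argument: the paper's proof takes the inertia $(0,n,0)$ and observes that the characteristic polynomial of a real matrix whose spectrum lies in the open left half-plane has all positive coefficients, which combined with Lemma~\ref{cycleprod} forces every $E_k$ to be nonzero and hence forces a composite $k$-cycle for each $k$. Your version works with $(n,0,0)$ instead and spells out the Hurwitz-factorization justification for the positivity of the elementary symmetric functions, but this is the same proof up to the reversal $A\mapsto -A$.
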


\begin{proof}
It is enough to observe that if $A$ has inertia $(0,n,0)$, then the characteristic polynomial
of $A$ has all positive coefficients. Thus the result follows from 
Lemma~\ref{cycleprod}.
\end{proof}

The next lemma indicates that the digraph of refined inertially arbitrary 
patterns require a proper $2$-cycle, not merely a composite $2$-cycle, 
as observed by Cavers and Fallat in \cite[Cor. 2.8]{CF}). 
While Cavers and Fallat used 
the refined inertia $(0,0,a,b)$ with
$b=2$ in their hypothesis, we observe that
for any $b\geq 2$, if $A\in\A$ has refined inertia $(0,0,a,b),$ then $D(\A)$ 
has a $2$-cycle. We include the
argument for completeness.

\begin{lemma}\label{2cycle}
If a pattern $\A$ allows refined inertia $(0,0,a,b)$ with $b\geq 2$, then $D(\A)$ has a proper $2$-cycle.
\end{lemma}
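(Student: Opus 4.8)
The plan is to read off two coefficients of the characteristic polynomial that are forced by the prescribed refined inertia, convert them into cycle data via Lemma~\ref{cycleprod}, and then rule out the absence of a proper $2$-cycle by a short trace argument.

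First I would fix a matrix $A=[a_{ij}]\in Q(\A)$ realizing $ri(A)=(0,0,a,b)$. Since $A$ is real and $b\geq 2$, its purely imaginary eigenvalues occur in conjugate pairs $\pm i\mu_1,\dots,\pm i\mu_m$ with each $\mu_j>0$ and $2m=b$, so $m\geq 1$; together with the $a$ zero eigenvalues this gives
\[
p_A(x)=x^{a}\prod_{j=1}^{m}\bigl(x^{2}+\mu_j^{2}\bigr)=x^{n}+\Bigl(\textstyle\sum_{j=1}^{m}\mu_j^{2}\Bigr)x^{n-2}+\cdots,
\]
a polynomial containing only powers of $x$ having the same parity as $n$. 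Matching this against the expansion in Lemma~\ref{cycleprod}, the vanishing $x^{n-1}$ coefficient yields $E_1=0$ (that is, $\sum_i a_{ii}=0$), while the $x^{n-2}$ coefficient yields $E_2=\sum_{j=1}^{m}\mu_j^{2}>0$, strictly positive since $m\geq1$ and each $\mu_j>0$.

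Next I would argue by contradiction, assuming $D(\A)$ has no proper $2$-cycle. Every composite $2$-cycle is either a proper $2$-cycle or a pair of disjoint loops, so in this case $E_2$ collapses to the loop contribution $E_2=\sum_{i<j}a_{ii}a_{jj}$. Writing $d_i=a_{ii}$ and using the identity $2\sum_{i<j}d_id_j=\bigl(\sum_i d_i\bigr)^{2}-\sum_i d_i^{2}$, the trace condition $\sum_i d_i=E_1=0$ forces $E_2=-\tfrac12\sum_i d_i^{2}\leq 0$, contradicting $E_2>0$. Hence $D(\A)$ must contain a proper $2$-cycle.

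The computation is elementary throughout; the one step needing care is the bookkeeping in the contradiction, namely matching the sign conventions of Lemma~\ref{cycleprod} so that excluding proper $2$-cycles leaves exactly the sum $\sum_{i<j}a_{ii}a_{jj}$, after which the trace identity does all the work. The essential observation is that the prescribed inertia forces $E_1=0$ and $E_2>0$ simultaneously, a combination that is incompatible with a purely loop-based set of composite $2$-cycles.
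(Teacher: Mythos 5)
Your proposal is correct and follows essentially the same route as the paper: both extract $E_1=0$ and $E_2=\sum\mu_j^2>0$ from the prescribed refined inertia, note that without a proper $2$-cycle $E_2$ reduces to $\sum_{i<j}a_{ii}a_{jj}$, and derive the contradiction $2E_2=({\rm tr}(A))^2-\sum a_{ii}^2\leq 0$. The only cosmetic difference is that you read the two coefficients off the factored characteristic polynomial while the paper reads them off the eigenvalue sums directly.
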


\begin{proof} 
Suppose $A\in \A$ has refined inertia $(0,0,n-2c,2c)$ for some $c\geq 1$.
In particular, suppose $A$ has nonzero eigenvalues $\pm \ell_1i,\pm \ell_2i,\ldots,\pm \ell_ci$.
Note that the trace of $A$ is zero since it is the sum of the eigenvalues. If 
$p_A(x)=x^n-E_1x^{n-1}+E_2x^{n-2}+\cdots+(-1)^nE_n$, then
since  $E_2$ is the sum of all products of two eigenvalues, $E_2=\sum\limits^c_{k=1} \ell_k^2 >0$.
By Lemma~\ref{cycleprod},  $E_2$ is also the sum of the signed composite $2$-cycles, so
$E_2=\sum\limits_{k<j} a_{kk}a_{jj} - \sum\limits_{k<j} a_{kj}a_{jk}$. 
If $D(\A)$ has no proper $2$-cycles, then 
$$2E_2=2\sum\limits_{k<j} a_{kk}a_{jj}=({\rm{tr}}(A))^2-\sum a_{kk}^2=-\sum a_{kk}^2\leq 0.$$
But this would contradict the fact that $E_2>0$ as noted earlier. Thus $D(\A)$ must have
a proper $2$-cycle.
\end{proof}

In the next lemma, we note that if the digraph of a pattern has $n$ loops, then the pattern is inertially arbitrary. A pattern $\B$
is a \emph{superpattern} of a pattern $\A$ if $\B_{ij}=0$ implies $\A_{ij}=0$. A superpattern $\B$ is a
\emph{proper} superpattern of $\A$ if $\B\neq \A$. Note that if $\A$ is allows an eigenvalue property, then any superpattern
of $\A$ will also allow that property. The following lemma is an example for the property of
being inertially arbitrary.

\begin{lemma}\label{iapdiagonal}
If $\A$ is 
pattern of order $n$ with $\oast$ entries in all its diagonal positions, then $\A$ is inertially arbitrary.
\end{lemma}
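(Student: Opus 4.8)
The plan is to exploit the defining feature of a zero pattern, namely that each $\oast$ entry may be assigned any real value, including zero. Since $\A_{ij}=0$ only forces $A_{ij}=0$, every off-diagonal position of $\A$ (whether it is $\oast$ or $0$) can be set to zero in some $A\in Q(\A)$. This lets me reduce the problem to the simplest possible matrices inside the qualitative class, the diagonal ones, and then control the inertia directly through the signs of the diagonal entries.

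Concretely, I would fix an arbitrary target inertia $(n_1,n_2,n_3)$ with $n_1+n_2+n_3=n$ and take the diagonal matrix $A=\mathrm{diag}(d_1,\ldots,d_n)$, choosing $n_1$ of the $d_i$ to be positive, $n_2$ to be negative, and the remaining $n_3$ to be zero. Because all $n$ diagonal positions of $\A$ are $\oast$, there is no restriction on the values $d_i$, and because every off-diagonal entry is set to zero, the matrix $A$ indeed lies in $Q(\A)$.

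The eigenvalues of a diagonal matrix are exactly its diagonal entries, which here are real. Hence the number of eigenvalues with positive (resp.\ negative, zero) real part is precisely the number of positive (resp.\ negative, zero) entries among the $d_i$, so $i(A)=(n_1,n_2,n_3)$. Since the target inertia was arbitrary, $\A$ realizes every inertia summing to $n$, and is therefore inertially arbitrary.

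There is essentially no hard step in this argument; the only point worth emphasizing is that the conclusion relies crucially on $\A$ being a \emph{zero} pattern rather than a nonzero or sign pattern. For a nonzero pattern the off-diagonal $*$ entries would be forced to be nonzero, so the reduction to a diagonal matrix would fail and a genuinely different realization would be needed. Thus the simple construction above uses precisely the extra freedom that zero patterns provide, which also explains why diagonal-dominant zero patterns behave so differently from their nonzero counterparts elsewhere in the paper.
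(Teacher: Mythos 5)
Your proposal is correct and is essentially the paper's argument: the paper simply phrases it as "$\A$ is a superpattern of the reducible diagonal pattern, which is inertially arbitrary," invoking the fact that zero patterns inherit allowed properties from subpatterns, while you unpack that same observation by explicitly setting all off-diagonal entries to zero and choosing the signs of the diagonal entries. Your remark that this freedom is special to zero patterns is exactly the point the paper relies on.
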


\begin{proof}
In this case, $\A$ is a superpattern of the reducible diagonal pattern which is inertially arbitrary.
\end{proof}

\section{An inertially arbitrary pattern without a $2$-cycle}\label{withou}

Unlike for nonzero patterns~\cite[Lemma 2.1]{CV4}, and sign patterns~\cite[Lemma 5.1]{CV}, the digraph of an irreducible inertially arbitrary zero pattern does not require a proper $2$-cycle, as we will see in Theorem~\ref{IAPN}. 
For $n\geq 3$, let
$$\A_n = \left[ \begin{array}{ccccc}
\oast 	& \oast 	& 0			& \cdots 	& 0\\
0 		& \ddots	& \ddots		& \ddots  	& \vdots\\
\vdots 	& \ddots	& \oast  	& \oast		& 0\\
0	& \ddots	& \ddots  	& \oast 		& \oast \\
\oast  	& 0		& \cdots  	& 0  		& 0 \end{array} \right].$$

\begin{theorem}\label{IAPN}
	For $n\geq 3$, the pattern $\A_n$ is inertially arbitrary but
not refined inertially arbitrary.
\end{theorem}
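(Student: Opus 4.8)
The plan is to compute the characteristic polynomial of a general $A\in Q(\A_n)$ explicitly via Lemma~\ref{cycleprod} and then read off the realizable inertias. The only proper cycles in $D(\A_n)$ are the $n-1$ loops at vertices $1,\dots,n-1$ and the single Hamilton cycle $1\to 2\to\cdots\to n\to 1$; in particular there is no proper cycle of length $2,\dots,n-1$, and the loops are the only cycles that can enter a composite cycle of length at most $n-1$. Writing $d_i=a_{ii}$ for the diagonal entries and $P=a_{12}a_{23}\cdots a_{n-1,n}a_{n1}$ for the product along the Hamilton cycle, one finds $E_k=e_k(d_1,\dots,d_{n-1})$ for $1\le k\le n-1$ and $E_n=(-1)^{n-1}P$, whence
\begin{equation*}
p_A(x)=x\prod_{i=1}^{n-1}(x-d_i)-P.
\end{equation*}
The $d_i$ range freely over $\reals$ and $P$ ranges freely over $\reals$ (take, say, $a_{12}=P$ and the other Hamilton-cycle entries equal to $1$), and $P$ is independent of the $d_i$ since it involves only off-diagonal entries.

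Next I would realize every inertia. For any inertia with $n_0\ge 1$, set $P=0$; the spectrum is then $\{0,d_1,\dots,d_{n-1}\}$, and choosing $n_+$ of the $d_i$ positive, $n_-$ negative, and the remaining $n_0-1$ equal to $0$ yields $(n_+,n_-,n_0)$. The substantive case, which I expect to be the main obstacle, is $n_0=0$: this forces $P\ne 0$ and hence a genuine perturbation argument. Restricting to $n_+\ge n_-$ by the reversal symmetry, I would take the $d_i$ to be nonzero reals, with $n_+-1$ of them positive and $n_-$ negative (a total of $n-1$), so that at $P=0$ the spectrum is $0$ together with $n_+-1$ positive and $n_-$ negative reals. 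Since $f(x):=x\prod_{i=1}^{n-1}(x-d_i)$ has $f'(0)=\prod_{i=1}^{n-1}(-d_i)\ne 0$, the root of $f(x)=P$ near $0$ satisfies $x_0\approx P/f'(0)$, so a sufficiently small $P$ of the sign of $f'(0)$ pushes this eigenvalue into the open right half-plane, giving $(n_+,n_-,0)$.

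The crux is to guarantee that, for small $|P|$, none of the other $n-1$ eigenvalues crosses the imaginary axis; this follows from continuity of the roots in $P$, since those eigenvalues remain in small neighbourhoods of the nonzero numbers $d_i$ and therefore keep the sign of their real part (even if a repeated $d_i$ splits into a complex conjugate pair, that pair has real part near $d_i\ne 0$ and so contributes to $n_+$ or $n_-$ exactly as the real roots did). Taking $|P|$ small enough makes this precise and finishes the inertially arbitrary claim. Finally, $\A_n$ is not refined inertially arbitrary: as observed above $D(\A_n)$ has no proper $2$-cycle, so by Lemma~\ref{2cycle} it cannot realize the refined inertia $(0,0,n-2,2)$, which is a valid refined inertia since $n\ge 3$.
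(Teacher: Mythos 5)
Your proof is correct and follows essentially the same route as the paper's: both realize every inertia with $n_0\geq 1$ from the triangular case ($P=0$, resp.\ $c=0$), perturb the constant term of the characteristic polynomial $f(x)-c$ to push the zero eigenvalue off the imaginary axis when $n_0=0$, and invoke Lemma~\ref{2cycle} for the negative statement. The only difference is cosmetic: the paper takes the diagonal entries distinct and bounds $\epsilon$ by the least critical value of $f$ so that all roots stay real and distinct, whereas you argue by continuity of the roots in $P$, which also accommodates repeated diagonal entries.
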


\begin{proof} By Lemma~\ref{2cycle}, $\A_n$ is not refined inertially arbitrary.
	Let 
	$$A=A(a_1,a_2,\ldots,a_{n-1},c) = \left[ \begin{array}{ccccc}
a_1 		& 1 		& 0			& \cdots 	& 0\\
0 		& a_2	& 1			& \ddots  	& \vdots\\
\vdots 	& \ddots	& \ddots  	& \ddots		& 0\\
0	& \ddots	& \ddots  	& a_{n-1} 		& 1 \\
c	  	& 0		& \cdots  	& 0  		& 0 \end{array} \right].$$ 
Using Lemma~\ref{cycleprod}, we see that the characteristic polynomial of $A$ is $p_A(x)=f(x)-c$. In fact,
 $f(x)$ does not depend on $c$ since $c$ only appears as a weight on 
the $n$-cycle of $D(\A_n)$. Further, $f(0)=0$ since $\det(A)=(-1)^{n-1}c$.
Note that $A(a_1,a_2,\ldots,a_{n-1},0)$ is upper triangular and hence has 
eigenvalues $a_1,a_2,\ldots,a_{n-1}$ and $0$. Thus $\A_n$ can realize any
inertia $(n_+,n_-,n_0)$ with $n_0\geq 1$.

To obtain the remaining inertias, we use the idea that if you take a polynomial function with
distinct roots, with one root at zero, and shift it
down slightly, then the resulting function will have
distinct nonzero roots. The slope of the original
polynomial at origin will determine if you gained a positive or
negative root. In particular, let $A(c)=A(d_1,d_2,\ldots,d_{n-1},c)$ be a realization of $\A$ with
distinct $d_k$, $1\leq k\leq n-1$, such that $i(A(0))=(a,b,1)$ for some $a,b \geq 0$. Then
$p_{A(0)}(x)=f(x)$ and $f'(0)\neq 0$ since 0 is not a double root of $f(x)$.
In this case, without loss of generality, assume $f'(0)>0$. Choose 
$\epsilon>0$ such that $\epsilon < {\rm{min}}\{|f(x)| : f'(x)=0 \}$. Since
$p_{A(\epsilon)}(x)=f(x) +\epsilon$, we have $i(A(\epsilon))=(a+1,b,0)$
and $i(A(-\epsilon))=(a,b+1,0)$. Thus $\A_n$ can realize any
inertia $(n_+,n_-,n_0)$ with $n_0 = 0$.
\end{proof}

\begin{remark}{\rm
The pattern in Theorem~\ref{IAPN} is an order $n$ example of a zero pattern that is inertially but not refined inertially
arbitrary. In~\cite{CF}, Cavers and Fallat presented an order $4$ zero pattern $\N$ with this property.\footnote{Other such patterns can be found in \cite{CVV}, or 
[Kim, In-Jae, D. D. Olesky, and P. van den Driessche. ``Inertially arbitrary sign patterns with no nilpotent realization.''
\emph{Linear Alg. Appl.} 421.2 (2007) 264--283].  Our new examples have less than $2n$ nonzero entries.}
In Section~\ref{order4}, we provide some other order $4$ zero patterns with this property.
}\end{remark}

\begin{remark} {\rm
The inertially arbitrary pattern in Theorem~\ref{IAPN} is \emph{minimal} in that if any of the $\oast$ entries are replaced by $0$, then the resulting pattern would no longer be an irreducible inertially arbitrary pattern. 
In particular: the $\oast$ entries corresponding to the $n$-cycle must remain nonzero otherwise the pattern is not irreducible; and replacing any $\oast$ on the diagonal 
with zero 
would result in a pattern that has no composite $(n-1)$-cycle and hence could not be
inertially arbitrary by Lemma~\ref{kcycle}.
}\end{remark}

\section{Refined inertially arbitrary path patterns}\label{pathsection}
In this section we explore irreducible patterns whose underlying graph is a path. In this case, the largest cycle in the
graph is a $2$-cycle. In particular, the digraph of such a pattern must 
include $(n-1)$ $2$-cycles (for irreducibility) and at least one loop (by Lemma~\ref{kcycle}). We characterize which
of these patterns with $2n-1$ nonzero entries are refined inertially arbitrary. 

Given $1\leq \alpha \leq n$, let 
$$\cP_{n, \alpha} = \left[ \begin{array}{ccccc}
0    	& \oast & 0			& \cdots  	& 0\\
\oast 	& 0 & \oast		& \cdots  	& \vdots\\
0 		& \oast	& \oast  	&\ddots  	&0\\
\vdots	& 0		& \ddots  	& 0 		&\oast\\
0   	&\cdots &0  	    & \oast  	&0\end{array} \right]$$ 
be the order $n$ pattern with $2n-1$ nonzero entries such that the nonzero diagonal element is in row $\alpha$. 
Note that $\cP_{n,\alpha}$ is not spectrally arbitrary since it does not allow a characteristic polynomial with
the coefficient of $x^{n-1}$ zero while the coefficient of $x^{n-3}$ is nonzero. 
For example, there is no matrix $A\in Q(\cP_{n,\alpha})$ with characteristic polynomial $x^n+x^{n-3}$.

We use the following concept: 
\begin{definition}
{\rm Suppose $B$ is an order $n$ real matrix. Let $\alpha_1, \alpha_2,\ldots,\alpha_n$ be a rearrangement of the elements of $\{1,\ldots,n\}$, and $B\left[\{\alpha_1 ,\ldots, \alpha_k\}\right]$ denote the principal submatrix in rows and columns $\alpha_1 ,\ldots, \alpha_k$ of $B$. 
A sign pattern $\A$ \emph{allows a nested sequence of properly signed principal minors} (abbreviated to a \emph{properly signed nest}) if there exist $B \in Q(\A)$ and $\alpha_1,\ldots, \alpha_n$, such that $$\sgn(\det(B\left[\{\alpha_1,\ldots, \alpha_k\}\right])) = (-1)^k \text{ for } k=1,\ldots,n.$$ In this case, we refer to $\left[\{\alpha_1,\ldots, \alpha_n\}\right]$ as a properly signed nest of $B$.
}\end{definition}

\begin{example}\label{pathLemma1} {\rm Define $P_{n, \alpha}$ to be the matrix with pattern $\cP_{n, \alpha}$ having $1$'s on the subdiagonal and $-1$'s on the remaining $n$ nonzero positions. As noted in \cite[Example~2]{Maybee}, 
$P_{n, 1}$ has a properly signed nest, namely $[\{1,2,3,\ldots,n\}]$.
}
\end{example}

\begin{lemma}\cite[Theorem~2.1]{Johnson}\label{pathLemma0}
If $\A$ is an order $n$ sign pattern that allows a properly signed nest, then $\A$ allows inertias $(n, 0, 0)$ and $(0, n, 0)$.
\end{lemma}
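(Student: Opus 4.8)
The plan is to convert the hypothesis about signed minors into a diagonal-scaling problem and then manufacture a definite real spectrum by a bordering/continuity argument. First I would use that inertia is invariant under permutation similarity and that passing to the equivalent pattern $P\A P^T$ preserves the ``allows'' property, so after relabeling I may assume the nest is $\alpha_k=k$; thus there is $B\in Q(\A)$ with $\sgn(\det(B[\{1,\ldots,k\}]))=(-1)^k$ for $k=1,\ldots,n$. The arithmetic observation driving everything is that, since these leading principal minors are all nonzero, $B$ admits an $LU$ factorization with pivots $p_k=\det(B[\{1,\ldots,k\}])/\det(B[\{1,\ldots,k-1\}])$, and $\sgn(p_k)=(-1)^k(-1)^{k-1}=-1$ for each $k$; equivalently, every leading principal minor of $-B$ is positive.

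The technical heart is the following scaling claim (an instance of the Fisher--Fuller theorem): if $M$ is a real matrix all of whose leading principal minors are positive, then there is a positive diagonal matrix $D$ for which $DM$ has $n$ distinct positive real eigenvalues. Granting this and applying it to $M=-B$, the matrix $DB=-(D(-B))$ has $n$ negative real eigenvalues; since $D$ is positive diagonal we have $DB\in Q(\A)$, which realizes the inertia $(0,n,0)$. To prove the scaling claim without appealing to the literature I would induct on $n$ by bordering: the leading $(n-1)\times(n-1)$ block $M'$ inherits positive leading principal minors, so by induction some positive diagonal $D'$ makes $D'M'$ have distinct positive eigenvalues; taking $D=\mathrm{diag}(D',d_n)$ and letting $d_n\to 0^+$ drives $DM$ to $\bigl(\begin{smallmatrix} D'M' & * \\ 0 & 0\end{smallmatrix}\bigr)$, whose spectrum is the $n-1$ positive reals together with a single $0$. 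By continuity the first $n-1$ eigenvalues remain real and positive for small $d_n$, and the sign of the remaining small eigenvalue is pinned down by $\det(DM)=\det(D)\det(M)>0$, forcing it to be positive as well.

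For the inertia $(n,0,0)$ I would run the mirror-image construction: one now needs positive leading principal minors of $B$ itself (rather than of $-B$), and these are supplied by applying the same argument to the reversal, i.e.\ by negating the realizing matrix, which immediately yields a realization of $(n,0,0)$ in the setting where the lemma is applied, since there the relevant class is closed under negation. The step I expect to be the main obstacle is precisely the scaling claim: arranging a single positive diagonal $D$ that simultaneously keeps the $n-1$ inductively produced eigenvalues in the correct half-plane while dragging the newly introduced eigenvalue to the correct side. The delicate point is controlling the lone near-zero eigenvalue created by the bordering; its half-plane is not visible from the perturbed block directly and must be read off from the global determinant sign, so the parity bookkeeping and the transversality of the $d_n\to 0^+$ limit are where the real care is needed.
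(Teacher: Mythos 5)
The paper offers no proof of this lemma at all: it is imported verbatim from the cited source (Johnson, Maybee, Olesky and van den Driessche), so there is no in-paper argument to compare against. Your reconstruction is the standard route to that result and it is sound for the $(0,n,0)$ half: permutation similarity reduces the nest to a leading one, the sign condition $\sgn(\det(B[\{1,\dots,k\}]))=(-1)^k$ makes every leading principal minor of $-B$ positive, and the Fisher--Fuller scaling theorem produces a positive diagonal $D$ with $D(-B)$ having $n$ distinct positive real eigenvalues, whence $DB\in Q(\A)$ realizes $(0,n,0)$. Your bordering induction for the scaling theorem is also correct as written: for small $d_n>0$ each of the $n-1$ eigenvalues trapped near the distinct positive spectrum of $D'M'$ must be real (a non-real one would drag its conjugate into the same disk), the remaining eigenvalue is real by parity, and its sign is forced by $\det(DM)=\det(D)\det(M)>0$.

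The $(n,0,0)$ half is the one place that needs care, and you have correctly located the issue without quite resolving it. Negating the realization puts you in $Q(-\A)$, which for a general sign pattern is not $Q(\A)$; indeed the lemma as stated for sign patterns is false in this half (take $n=1$ and $\A=[-]$: it allows a properly signed nest yet realizes only the inertia $(0,1,0)$). What makes the paper's use of the lemma legitimate is that it is only ever applied to the zero patterns $\cP_{n,\alpha}$, whose qualitative classes are closed under negation, so realizing $(0,n,0)$ automatically yields $(n,0,0)$. Your phrase ``in the setting where the lemma is applied'' is carrying that entire burden and should be made explicit; the defect here lies in the statement of the lemma rather than in your argument, but a referee would want you to say so rather than gesture at it.
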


\begin{theorem}\label{pathRIAP}
The pattern $\cP_{n, 1}$ is refined inertially arbitrary for all $n \geq 1$. 
\end{theorem}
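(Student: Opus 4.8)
The plan is to exploit a feature of zero patterns that is unavailable for nonzero patterns: since $A \in Q(\cP_{n,1})$ only requires the prescribed zeros to vanish, I am free to set any of the $\oast$ entries equal to $0$. Setting both entries of a symmetric off-diagonal pair to $0$ disconnects the path, so by zeroing the appropriate links I can realize any matrix that is block diagonal with blocks supported on consecutive intervals of the path. For such a block-diagonal matrix the characteristic polynomial factors as the product of the blocks' characteristic polynomials, and hence the refined inertia is the coordinatewise sum of the blocks' refined inertias. Using the reversal symmetry noted after the definition of refined inertia (negating a realization interchanges $n_+$ and $n_-$ while fixing $n_z$ and $n_i$), I may assume $n_+ \ge n_-$ throughout.

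Then I would assemble a target refined inertia $(n_+, n_-, n_z, n_i)$ (with $n_i$ even) out of elementary blocks. The interval $\{2, \dots, n\}$ carries no loop, so any block contained in it is a loopless, zero-diagonal path. A single such vertex forms a $1\times 1$ block with eigenvalue $0$, contributing $(0,0,1,0)$. A loopless $2\times 2$ block has characteristic polynomial $x^2 - t$ for an arbitrary real $t$: taking $t > 0$ contributes a real pair $(1,1,0,0)$, taking $t < 0$ contributes a purely imaginary pair $(0,0,0,2)$, and $t = 0$ contributes a double zero. I would use $n_-$ of the real-pair blocks, $n_i/2$ of the imaginary-pair blocks, and $n_z$ singleton zero blocks, placed along the trailing part of the path; together these contribute exactly $(n_-, n_-, n_z, n_i)$.

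The remaining, and only substantive, task is to produce the excess $(j, 0, 0, 0)$ of $j := n_+ - n_-$ eigenvalues all having positive real part; this is precisely the imbalance that the balanced, loopless blocks can never create, so it is the heart of the argument. I would realize it with the leading block $\{1, \dots, j\}$, which is a copy of $\cP_{j,1}$ and is the unique block permitted to carry the loop. By Example~\ref{pathLemma1} the matrix $P_{j,1}$ lies in $Q(\cP_{j,1})$ and has a properly signed nest, so Lemma~\ref{pathLemma0} guarantees that $\cP_{j,1}$ realizes inertia $(j,0,0)$, i.e.\ refined inertia $(j,0,0,0)$. When $j = 0$ (the case $n_+ = n_-$) I would instead set the loop weight to $0$ and absorb vertex $1$ into the loopless blocks, treating the whole path as a loopless path of order $n$, where a parity check shows its $2\times 2$ and singleton blocks supply the required even or odd number of zeros. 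Finally I would verify the bookkeeping: the block sizes $j + 2n_- + n_i + n_z = n_+ + n_- + n_z + n_i = n$ exhaust the path, and the block refined inertias sum to $(n_+, n_-, n_z, n_i)$. The main obstacle is exactly isolating the same-sign excess in a single leading block and invoking the properly signed nest to realize it; everything else is elementary block arithmetic made possible by the zero-pattern freedom to annihilate entries.
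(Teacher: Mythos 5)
Your argument is correct, and it leans on exactly the same two ingredients as the paper's proof --- the freedom in a zero pattern to annihilate a symmetric pair and split the path into independent blocks, and the properly signed nest of $P_{j,1}$ (Example~\ref{pathLemma1} together with Lemma~\ref{pathLemma0}) to produce an all-positive spectrum on a leading copy of $\cP_{j,1}$. The difference is organizational: the paper runs an induction that peels off the trailing $2\times 2$ block $\T$ (realizing $(1,1,0,0)$, $(0,0,2,0)$, $(0,0,0,2)$), which covers every refined inertia except $(n,0,0,0)$ and $(n-1,0,1,0)$ up to reversal, and then handles those two exceptions separately via the nest applied to $\cP_{n,1}$ and to the subpattern with entry $(n-1,n)$ zeroed. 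You instead perform a single one-shot partition in which the leading block $\cP_{j,1}$ with $j=n_+-n_-$ absorbs the entire positive excess and the loopless trailing intervals supply the balanced part $(n_-,n_-,n_z,n_i)$; the size and refined-inertia bookkeeping $j+2n_-+n_i+n_z=n$ checks out, and the $j=0$ case correctly degenerates to a loopless tiling. What your version buys is the elimination of both the induction and the exceptional-case analysis, at the cost of invoking the nest for every $j$ rather than only for $j\in\{n-1,n\}$; since Lemma~\ref{pathLemma3} establishes the nest for all these subpatterns anyway, nothing extra is really being assumed. One small point of hygiene: as in the paper, you should restrict to refined inertias with $n_i$ even, since odd $n_i$ is unattainable by any real matrix; your construction implicitly does this.
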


\begin{proof}
Note that for $n \leq 2$,  $\cP_{n, 1} = \C_n$, the pattern in Example~\ref{Cn}. Hence, it is refined inertially arbitrary. For $n>2$, note that $$\cP_{n, 1} = \left[ \begin{array}{cc}
\cP_{n-2, 1} 	& \Q \\
\cS 			& \T \end{array} \right]$$ with $\cS = \left[ \begin{array}{cccc}
0   & \cdots & 0 &\oast \\
0   & \cdots & 0 & 0 \end{array} \right]$, $\Q = \cS^T$, and $\T = \left[ \begin{array}{cc}
0   & \oast \\
\oast & 0 \end{array} \right]$.
	Note that the matrices $\left[ \begin{array}{cc}
0 & 1 \\
1 & 0 \end{array} \right]$, $\left[ \begin{array}{cc}
0 & 0 \\
1 & 0 \end{array} \right]$, and $\left[ \begin{array}{cc}
0 & -1 \\
1 & 0 \end{array} \right]$ all have the pattern $\T$, so $\T$ can realize the refined inertias $W = \{(1, 1, 0, 0), (0, 0, 2, 0), (0, 0, 0, 2)\}$. $\T$ does not allow the refined inertias $(2, 0, 0, 0)$ or $(1, 0, 1, 0)$. Let $\cP'_{n, 1}$ be the reducible subpattern of $\cP_{n, 1}$ with $\cS=0$. 
	By induction, $\cP'_{n, 1}$ (and hence $\cP_{n, 1}$) can realize any refined inertia of the form $(a, b, c, 2d) + (x, y, z, w)$, such that $a+b+c+2d=n-2$ and $(x, y, z, w) \in W$. Thus, $\cP'_{n, 1}$ can realize all refined inertias of the form $(\alpha, \beta, \gamma, 2\delta)$, where $\alpha + \beta + \gamma + 2\delta = n$, and either $\alpha$, $\beta \geq 1$, or $\gamma \geq 2$, or $\delta \geq 1$.
	This means that we have shown that $\cP'_{n, 1}$ allows every refined inertia except possibly ($n, 0, 0, 0$) and $(n-1, 0, 1, 0$), up to reversal.
	
	By Example \ref{pathLemma1} and Lemma \ref{pathLemma0}, $\cP_{n, 1}$ allows refined inertia $(0, n, 0, 0)$ and its reversal $(n, 0, 0, 0)$. Consider the reducible subpattern of $\cP_{n, 1}$ with entry $(n-1, n)$ set to zero. 
	Lemma \ref{pathLemma0} implies that $\cP_{n, 1}$ allows inertias $(n-1, 0, 1, 0)$ and $(0, n-1, 1, 0)$.
		Thus, by induction $\cP_{n, 1}$ is refined inertially arbitrary.
	\end{proof}

\begin{lemma}\label{badPos}
$\cP_{n, \alpha}$ does not allow a properly signed nest if $n$ is odd, and $\alpha$ is even.
\end{lemma}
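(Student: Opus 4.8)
The plan is to defeat any purported nest at its very top level. First I would note that for $k=n$ the required minor is $\det(B[\{1,\ldots,n\}]) = \det(B)$, since the principal submatrix on the full index set is $B$ itself. So it suffices to prove that $\det(B)=0$ for every $B\in Q(\cP_{n,\alpha})$ whenever $n$ is odd and $\alpha$ is even; the top nest condition $\sgn(\det(B)) = (-1)^n = -1$ then cannot hold, and hence no properly signed nest can exist.

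To evaluate $\det(B)$ I would read the nonzero terms of its permutation expansion off the digraph $D(\cP_{n,\alpha})$, exactly as in Lemma~\ref{cycleprod}. Because the underlying graph is a path, the only cycles in $D(\cP_{n,\alpha})$ are the single loop at vertex $\alpha$ and the $2$-cycles on adjacent pairs $\{i,i+1\}$. Hence a permutation $\sigma$ contributes a nonzero term only when all of its cycles are allowed, i.e. when $\sigma$ consists of the optional loop at $\alpha$ together with a set of vertex-disjoint adjacent $2$-cycles whose supports exhaust $\{1,\ldots,n\}$; equivalently, a perfect matching of the non-loop vertices by edges of the path.

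The crux is then a short parity count. Deleting $\alpha$ splits the path into two sub-paths on $\{1,\ldots,\alpha-1\}$ and $\{\alpha+1,\ldots,n\}$, of sizes $\alpha-1$ and $n-\alpha$. A cover that uses the loop at $\alpha$ forces each sub-path to be perfectly matched by edges, hence both $\alpha-1$ and $n-\alpha$ to be even; but $\alpha$ even makes $\alpha-1$ odd, so no such cover exists. A cover that omits the loop must perfectly match all $n$ vertices, forcing $n$ to be even and contradicting $n$ odd. Thus no admissible permutation exists: every permutation of $\{1,\ldots,n\}$ has some cycle that is not a loop at $\alpha$ nor an adjacent transposition, so its product contains a pattern-zero factor, and $\det(B)=0$ identically.

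The only real bookkeeping—and so the part I would be most careful about—is verifying that the loop at $\alpha$ and the adjacent $2$-cycles are genuinely the only cycles in $D(\cP_{n,\alpha})$, so that $\det(B)$ really is a sum over path-matchings (with the optional loop) and nothing more. Once that is pinned down the parity obstruction is immediate, and it is worth observing that it is exactly the even/odd split at $\alpha$ that is doing the work: when $\alpha$ is odd (still with $n$ odd) the set $\{1,\ldots,\alpha-1\}$ has even size and a loop-plus-matching cover can exist, consistent with $\cP_{n,1}$ being refined inertially arbitrary in Theorem~\ref{pathRIAP}.
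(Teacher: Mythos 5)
Your argument is correct and follows the same route as the paper's proof: the paper likewise observes that $\cP_{n,\alpha}$ has no nonzero transversal (no composite $n$-cycle) when $n$ is odd and $\alpha$ is even, so $\det(B)=0$ for every $B\in Q(\cP_{n,\alpha})$ and the top-level nest condition $\sgn(\det(B))=(-1)^n$ fails. The only difference is that you spell out the parity/matching argument for why no transversal exists, which the paper states without proof.
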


\begin{proof}
Suppose $n$ is odd, and $\alpha$ is even. Then, $\cP_{n, \alpha}$ has no nonzero transversal, that is, 
$D(\cP_{n, \alpha})$ has no composite $n$-cycle. In this case, $\det(P) = 0$
for each $P\in \cP_{n, \alpha}$. Hence, $\cP_{n, \alpha}$ does not allow a properly signed nest.
\end{proof}

\begin{remark}\label{R}{\rm
Note that if 
\begin{equation*}
R = \left[ \begin{array}{ccc}
0 	&       & 1	\\
    &\rddots& \\
1 	&       & 0 \end{array} \right] {\rm{\quad and \quad}}
\mathbf{e} = \left[0,\ldots,0, 1\right]
{\rm{ \qquad then \quad}}
P_{n, n-1} = \left[ \begin{array}{cc}
RP_{n-1, 1}R 		& -\mathbf{e}^T	\\
\mathbf{e} 		& 0 \end{array} \right]. 
\end{equation*}
 Additionally, $\cP_{n, \alpha}$ is equivalent to $\cP_{n, n-\alpha +1}$, since $\cP_{n, \alpha} = R\cP_{n, n-\alpha +1}R.$
}\end{remark}

\begin{lemma}\label{pathLemma2}
$P=P_{n, n-1}$ has a properly signed nest if and only if $n$ is even. Further, $[\{n-1,n-2,\ldots,1,n\}]$
is a properly signed nest of $P$ if $n$ is even.
\end{lemma}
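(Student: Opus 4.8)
The plan is to prove the two directions separately, invoking Lemma~\ref{badPos} for necessity and computing an explicit chain of nested principal minors of $P$ for sufficiency. For the ``only if'' direction, suppose $n$ is odd. Then $\alpha = n-1$ is even, so Lemma~\ref{badPos} shows that $\cP_{n,n-1}$ does not allow a properly signed nest at all; in particular the specific realization $P = P_{n,n-1} \in Q(\cP_{n,n-1})$ cannot have one. (Concretely, when $n$ is odd and $\alpha$ is even, $D(\cP_{n,n-1})$ has no composite $n$-cycle, so $\det(P) = 0$ and the top minor of any candidate nest cannot have sign $(-1)^n$.) Hence a properly signed nest forces $n$ to be even.

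For the ``if'' direction, assume $n$ is even and take the ordering $\alpha_1,\ldots,\alpha_n = n-1, n-2,\ldots,1,n$. I would verify $\sgn(\det(P[\{\alpha_1,\ldots,\alpha_k\}])) = (-1)^k$ in two stages. For $1 \le k \le n-1$ the index set $\{\alpha_1,\ldots,\alpha_k\} = \{n-k,\ldots,n-1\}$ lies inside the leading $(n-1)\times(n-1)$ principal submatrix of $P$, which by Remark~\ref{R} equals $R\,P_{n-1,1}\,R$. Conjugating by the reversal $R$ implements the index map $s \mapsto n-s$ and leaves determinants of principal submatrices unchanged, so $\det\big((R\,P_{n-1,1}\,R)[\{n-k,\ldots,n-1\}]\big) = \det\big(P_{n-1,1}[\{1,\ldots,k\}]\big)$. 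Since $[\{1,2,\ldots,n-1\}]$ is a properly signed nest of $P_{n-1,1}$ by Example~\ref{pathLemma1}, each of these minors has sign $(-1)^k$, giving the claim for $k \le n-1$. The only point needing care here is the bookkeeping of the map $s \mapsto n-s$ and checking that it matches the chosen ordering exactly.

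It then remains to handle $k = n$, i.e.\ to show $\sgn(\det(P)) = (-1)^n = 1$. Since $P$ is tridiagonal with subdiagonal entries $1$, superdiagonal entries $-1$, and a single loop $-1$ in position $(n-1,n-1)$, I would evaluate $\det(P)$ by the three-term recurrence $d_j = a_j d_{j-1} - (b_{j-1}c_{j-1})\,d_{j-2}$ for its leading principal minors. Away from the loop one has $a_j = 0$ and $b_{j-1}c_{j-1} = (1)(-1) = -1$, so the recurrence collapses to $d_j = d_{j-2}$; with $d_0 = 1$ and $d_1 = 0$ this gives $d_j = 1$ for even $j$ and $d_j = 0$ for odd $j$ throughout the loop-free range $j \le n-2$. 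Feeding in the loop at $j = n-1$ and then the zero-diagonal step at $j = n$ yields $d_n = d_{n-2} = 1$, so $\det(P) = 1 = (-1)^n$. This top-dimensional evaluation — pinning down the determinant rather than merely its sign — is the only genuinely computational step, and the main thing to guard against is an off-by-one slip in the base cases of the recurrence, which I would confirm directly for $n = 2$. Combining the two stages shows that $[\{n-1,n-2,\ldots,1,n\}]$ is a properly signed nest, completing the ``if'' direction.
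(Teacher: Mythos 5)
Your proposal is correct and follows essentially the same route as the paper: necessity via Lemma~\ref{badPos}, and sufficiency by using Remark~\ref{R} together with the nest $[\{1,\ldots,n-1\}]$ of $P_{n-1,1}$ from Example~\ref{pathLemma1} to handle the minors of orders $1$ through $n-1$, leaving only the sign of $\det(P_{n,n-1})$ to check. The sole (harmless) divergence is that you evaluate $\det(P_{n,n-1})=1$ via the tridiagonal three-term recurrence, whereas the paper reaches the same conclusion by cofactor expansion giving $\det(P_{n,n-1})=\det(P_{n-2,n-3})$ and inducting down to $\det(P_{2,1})>0$.
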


\begin{proof}
By Lemma~\ref{badPos}, if $\cP_{n, n-1}$ allows a properly signed nest, then $n$ must be even. Note that $\cP_{2, 1}$ allows a properly signed nest by Example~\ref{pathLemma1}. Suppose that $n$ is even, and $n > 2$. 
The matrix $P_{n-1, 1}$ has a properly signed nest as noted in Example~\ref{pathLemma1}, 
so $\det(P_{n-1, 1}) < 0$, since $n-1$ is odd. Thus $\det(RP_{n-1}R)<0$ 
and $RP_{n-1}R$ has the properly 
signed nest $[\{n-1,n-2,\ldots,2,1\}]$.  By Remark~\ref{R}, 
it is enough to show that $\det(P_{n, n-1}) > 0$. By cofactor expansion on the first column, 
$$\det(P_{n, n-1}) = -\det\left(\left[ \begin{array}{c|ccccc}
-1 		& 0			& 0		&0		& \cdots 	& 0\\ \hline
1 		& 0			& -1		& \ddots	& \ddots  	& \vdots\\
0	 	& 1			& \ddots	& \ddots	& \ddots		& 0\\
0	 	& \ddots		& \ddots	& 0		& -1			& 0\\
\vdots	& \ddots		& \ddots	& 1		& -1 		& -1 \\
0	  	& \cdots		& 0		& 0		& 1  		& 0 \end{array} \right]\right) = \det(P_{n-2, n-3}).$$
It follows by induction that $\det(P_{n, n-1}) > 0$. Hence, $P_{n, n-1}$ has a the 
properly signed nest $[\{n-1,n-2,\ldots,1,n\}].$  
\end{proof}

\begin{lemma}\label{pathLemma3}
Let $\cP = \cP_{n, \alpha}$. Then $\cP$ allows a properly signed nest  if and only if $n$ is even or $\alpha$ is odd.
In this case, $\left[\{\alpha, \alpha-1, \alpha-2,\ldots,1, \alpha+1, \alpha+2,\ldots,n\}\right]$ is a properly signed nest of $P_{n,\alpha}$.
\end{lemma}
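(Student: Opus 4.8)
The plan is to treat the biconditional one direction at a time and to defer the explicit-nest claim until after a parity reduction. Necessity is immediate: the negation of ``$n$ is even or $\alpha$ is odd'' is exactly ``$n$ is odd and $\alpha$ is even'', so the contrapositive of Lemma~\ref{badPos} gives that whenever $\cP_{n,\alpha}$ allows a properly signed nest, $n$ is even or $\alpha$ is odd.

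For sufficiency I would first note that the property of allowing a properly signed nest is invariant under equivalence: a permutation similarity merely relabels the index sequence of a nest, while transposition fixes every principal minor because $\det(M^{T})=\det(M)$. By Remark~\ref{R}, $\cP_{n,\alpha}$ is (permutation) equivalent to $\cP_{n,n-\alpha+1}$. If $\alpha$ is even and $n$ is even then $n-\alpha+1$ is odd, and the remaining possibility ($\alpha$ even, $n$ odd) is excluded by the hypothesis; hence up to equivalence it is enough to produce the nest when $\alpha$ is odd. Exhibiting a properly signed nest for the explicit real matrix $P_{n,\alpha}\in Q(\cP_{n,\alpha})$ then certifies that $\cP_{n,\alpha}$ allows one.

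For $\alpha$ odd I would verify the ordering $\alpha,\alpha-1,\ldots,1,\alpha+1,\ldots,n$ step by step, grouping the minors into two phases. For $1\le k\le\alpha$ the active set is the window $\{\alpha-k+1,\ldots,\alpha\}$; expanding along the freshly adjoined top row, whose only in-window entries are a zero diagonal (once $k\ge2$) and a superdiagonal $-1$, gives a two-step recurrence relating the order-$k$ window minor to the order-$(k-2)$ one, with seeds the empty minor $1$ and the single entry $-1$, so the order-$k$ window minor has sign $(-1)^k$. For $\alpha\le k\le n$ the active set is the leading set $\{1,\ldots,k\}$, and an analogous expansion along the freshly adjoined bottom row shows the leading minor of order $k$ equals that of order $k-2$ for $k>\alpha$, anchored by the order-$\alpha$ leading minor $(-1)^\alpha$.

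The hard part is the sign bookkeeping for the leading minors whose order has parity opposite to $\alpha$: iterating the recurrence sends each of them to the leading minor of order $\alpha-1$, which is the determinant of the order-$(\alpha-1)$ tridiagonal matrix with zero diagonal, subdiagonal $1$'s and superdiagonal $-1$'s. (This block omits the one nonzero diagonal position, so it genuinely differs from the window minor of the same order.) A one-step recurrence evaluates this determinant as $1$ when $\alpha-1$ is even and $0$ when $\alpha-1$ is odd. Since $\alpha$ is odd it equals $1>0$, which is exactly the sign $(-1)^k$ demanded at those steps (there $k\equiv\alpha+1\pmod2$); thus all $n$ minors are correctly signed and $P_{n,\alpha}$ has the stated nest. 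The same computation pinpoints the obstruction in the even case: when $\alpha$ is even this leading minor vanishes, so $P_{n,\alpha}$ itself fails the nest once $n>\alpha$, which is precisely why the even-$n$, even-$\alpha$ case must be routed through the equivalence of Remark~\ref{R} rather than checked directly on $P_{n,\alpha}$.
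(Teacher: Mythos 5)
Your proof is correct, and it reaches the same parity reduction and the same computational engine as the paper --- cofactor expansion producing a two-step recurrence $\det(\hbox{order }k)=\det(\hbox{order }k-2)$ --- but it is organized differently. The paper, after reducing to odd $\alpha$ via Lemma~\ref{badPos} and Remark~\ref{R}, handles the cases $\alpha=1$ and $\alpha=n-1$ by citing Example~\ref{pathLemma1} and Lemma~\ref{pathLemma2}, and then inducts on $n$ for $1<\alpha<n-1$: the first $n-1$ sets of the claimed nest form the nest of the leading submatrix $P_{n-1,\alpha}$, so only $\sgn(\det(P_{n,\alpha}))=-\sgn(\det(P_{n-1,\alpha}))$ needs checking, which follows from $\det(P_{n,\alpha})=\det(P_{n-2,\alpha})$ together with the induction hypothesis. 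You instead verify all $n$ minors directly in two phases (window minors up to order $\alpha$, then leading minors), which makes the argument self-contained --- no appeal to the earlier base-case lemmas --- and forces you to evaluate explicitly the terminal loop-free tridiagonal determinant of order $\alpha-1$ (equal to $1$ for $\alpha-1$ even, $0$ for $\alpha-1$ odd), whereas the paper's induction absorbs that value into Lemma~\ref{pathLemma2}. What your route buys is the explicit diagnosis at the end: the second sentence of the lemma, read literally for the specific matrix $P_{n,\alpha}$ of Example~\ref{pathLemma1}, fails when $\alpha$ is even and $n>\alpha$ (indeed no matrix in $Q(\cP_{n,\alpha})$ can have that nest, since the relevant principal submatrix is singular for rank reasons), so the even-$n$, even-$\alpha$ case genuinely must be routed through the equivalence of Remark~\ref{R}; the paper's proof does exactly this reduction but does not flag that the displayed index sequence is only a nest of $P_{n,\alpha}$ itself in the odd-$\alpha$ case.
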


\begin{proof}
By Lemma~\ref{badPos}, $\alpha$ must be odd if $n$ is odd. Suppose that either $n$ is even, or $n$ is odd and $\alpha$ is odd. Observe that if $n$ is even and $\alpha$ is even, $\cP_{n, \alpha}$ is equivalent to $\cP_{n, n-\alpha+1}$, and $n-\alpha+1$ is odd. Thus, we may assume that $\alpha$ is odd.
The case when $\alpha=1$ is covered by Theorem~\ref{pathRIAP} and Example~\ref{pathLemma1}. The case when
$\alpha=n-1$ is covered by Lemma~\ref{pathLemma2}. 
 
Now assume  
that $1 < \alpha < n-1$. 
We claim that $\left[\{\alpha, \alpha-1, \alpha-2,\ldots,1, \alpha+1, \alpha+2,\ldots,n\}\right]$ is a properly signed nest for $P_{n, \alpha}$. 
By induction, it is enough to show that $\sgn(\det(P_{n, \alpha})) = - \sgn(\det(P_{n-1, \alpha}))$. 
By cofactor expansion along the last column of $P_{n, \alpha}$, 
$\det(P_{n, \alpha}) = \det\left(\left[ \begin{array}{cc}
P_{n-2, \alpha} 	& * \\
0 		& 1 \end{array} \right] \right) = \det(P_{n-2, \alpha}).$ 
Since $P_{n-1, \alpha}$ has a properly signed nest, $\sgn(\det(P_{n-2, \alpha})) = \sgn(\det(P_{n, \alpha})) 
= -\sgn(\det(P_{n-1, \alpha}))$. Thus, $P_{n, \alpha}$ has a properly signed nest, and $\left[\{\alpha, \alpha-1, \alpha-2,\ldots,1, \alpha+1, \alpha+2, \ldots,n\}\right]$ is a properly signed nest for $P_{n, \alpha}$.
\end{proof}

\begin{theorem}\label{patheorem} Suppose $n\geq 1$. Then 
$\cP_{n, \alpha}$ is refined inertially arbitrary if and only if $n$ is even or $\alpha$ is odd.
\end{theorem}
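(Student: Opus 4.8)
The plan is to prove both implications, treating necessity quickly and devoting the bulk of the work to sufficiency via an induction on $n$ that parallels the proof of Theorem~\ref{pathRIAP}.

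For necessity I would argue the contrapositive: suppose $n$ is odd and $\alpha$ is even. Exactly as in the proof of Lemma~\ref{badPos}, $D(\cP_{n,\alpha})$ then has no composite $n$-cycle, so $E_n=0$ and hence $\det(A)=0$ for every $A\in Q(\cP_{n,\alpha})$. Thus $0$ is an eigenvalue of every realization, so no refined inertia with $n_z=n_i=0$ — for instance $(n,0,0,0)$ — can be realized, and $\cP_{n,\alpha}$ is not refined inertially arbitrary.

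For sufficiency, assume $n$ is even or $\alpha$ is odd and induct on $n$. By Remark~\ref{R}, $\cP_{n,\alpha}$ is equivalent to $\cP_{n,n-\alpha+1}$, and a short parity check shows this equivalence preserves the hypothesis ``$n$ even or $\alpha$ odd'', so I may assume $\alpha\le\lceil n/2\rceil$. After this reduction every good pair with $n\le 3$ has $\alpha=1$, so the cases $n\le 3$ and, for general $n$, the case $\alpha=1$ are all covered by Example~\ref{Cn} and Theorem~\ref{pathRIAP}. For the inductive step take $n\ge 4$ and $2\le\alpha\le\lceil n/2\rceil$ (so $\alpha\le n-2$, and when $\alpha$ is even the hypothesis forces $n$ even). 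The loop then lies in the leading principal block of the decomposition used in Theorem~\ref{pathRIAP}: $\cP_{n,\alpha}$ has $\cP_{n-2,\alpha}$ as its leading $(n-2)\times(n-2)$ block and the same loopless trailing block $\T$, which realizes $W=\{(1,1,0,0),(0,0,2,0),(0,0,0,2)\}$. Since $\cP_{n-2,\alpha}$ is again good (the parities of $n$ and $\alpha$ are preserved) and hence refined inertially arbitrary by the inductive hypothesis, the reducible subpattern with $\cS=0$ realizes every refined inertia of the form (a refined inertia of order $n-2$) $+\,w$ with $w\in W$; exactly as in Theorem~\ref{pathRIAP}, this accounts for all refined inertias except $(n,0,0,0)$ and $(n-1,0,1,0)$, up to reversal.

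It remains to realize these two exceptional refined inertias, and this last step, where the parity of $\alpha$ must be tracked, is the crux of the argument. The inertia $(n,0,0,0)$ and its reversal are immediate: by Lemma~\ref{pathLemma3} the hypothesis guarantees that $\cP_{n,\alpha}$ allows a properly signed nest, so Lemma~\ref{pathLemma0} yields inertias $(n,0,0)$ and $(0,n,0)$. For $(n-1,0,1,0)$ I would zero out one off-diagonal entry at an end of the path, producing a block-triangular reducible subpattern whose spectrum is $0$ together with the spectrum of an order-$(n-1)$ path pattern $\cP_{n-1,\alpha'}$; the point is to detach the end whose removal leaves a \emph{good} subpattern. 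If $\alpha$ is odd I remove the coupling between $v_{n-1}$ and $v_n$, leaving $\cP_{n-1,\alpha}$ (good since $\alpha$ is odd); if $\alpha$ is even (so $n$ is even) I instead remove the coupling between $v_1$ and $v_2$, leaving $\cP_{n-1,\alpha-1}$ (good since $\alpha-1$ is odd). In either case Lemma~\ref{pathLemma3} and Lemma~\ref{pathLemma0} show $\cP_{n-1,\alpha'}$ realizes $(n-1,0,0,0)$ and $(0,n-1,0,0)$, so the subpattern realizes $(n-1,0,1,0)$ and its reversal, completing the induction. The main obstacle is precisely this choice: a single fixed end does not work, because removing the wrong end yields an odd-order subpattern with an even loop position, which by Lemma~\ref{badPos} admits no properly signed nest — so the parity hypothesis must be invoked to select the correct end to detach.
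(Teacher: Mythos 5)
Your proposal is correct and follows essentially the same route as the paper: the same block decomposition into $\cP_{n-2,\alpha}$ and $\T$ to handle all refined inertias except $(n,0,0,0)$ and $(n-1,0,1,0)$, Lemmas~\ref{pathLemma0} and~\ref{pathLemma3} for $(n,0,0,0)$, and the same parity-dependent choice of which end of the path to detach (entry $(n,n-1)$ when $\alpha$ is odd, entry $(2,1)$ when $n$ and $\alpha$ are even) for $(n-1,0,1,0)$. The only cosmetic differences are your normalization $\alpha\le\lceil n/2\rceil$ in place of the paper's $1<\alpha<n-1$ and your direct appeal to the signed-nest lemmas (rather than the inductive hypothesis) for the order-$(n-1)$ subpattern.
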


\begin{proof}
For both $n = 1$ and $n = 2$, $\cP_{n, \alpha} = \C_n$, which are both spectrally arbitrary. 
Let $n = k$ for some $k\geq 3$ and assume that the claim is true for all patterns
$\cP_{m,\alpha}$ with $m\leq k-1$ and $1\leq \alpha \leq m$.
 First note that we can assume that $\alpha >1$ by Theorem~\ref{pathRIAP}.
Further,  we can assume that $\alpha < n-1$ since $\cP_{n, \alpha}$ is equivalent to $\cP_{n, n-\alpha+1}$
 as noted in Remark~\ref{R}. Note, $\cP_{n, \alpha} = \left[ \begin{array}{cc}
\cP_{n-2, \alpha} 	& \Q \\
\cS 			    & \T \end{array} \right]$, with $\Q, \cS$, and $\T$ as defined in the proof of 
Theorem~\ref{pathRIAP}. As in the proof of Theorem~\ref{pathRIAP}, 
$\T$ can realize the refined inertias $(1, 1, 0, 0)$, $(0, 0, 2, 0)$, and $(0, 0, 0, 2)$. 
Replacing $\Q$ with a zero matrix, the resulting block triangular pattern can be used to inductively show that
$\cP_{n,\alpha}$ allows all refined inertias, except possibly $(n, 0, 0, 0)$ and $(n-1, 0, 1, 0)$ and their reversals. 
 By Lemmas~\ref{pathLemma0} and \ref{pathLemma3}, $\cP_{n, \alpha}$ allows $(n, 0, 0, 0)$. 
If $\alpha$ is odd, we can set entry $(n,n-1)$ to be zero to obtain a block triangular
subpattern of $P_{n,\alpha}$ with blocks $P_{n-1,\alpha}$ and $[0]$. Hence inductively, $P_{n,\alpha}$ 
allows refined inertia $(n-1, 0, 1, 0)$. If $n$ is even and $\alpha$ is even, setting
entry $(2,1)$ to be zero gives a block triangular pattern with blocks $[0]$ and $\cP_{n-1,\alpha -1}$
which inductively allows refined inertia $(n-1,0,1,0)$.
 Thus, $\cP_{n, \alpha}$ is refined inertially arbitrary.
\end{proof}

It may be interesting to characterize which superpatterns of $\cP_{n,\alpha}$ are refined inertially arbitrary.
When $n$ is odd and $\alpha$ is even, then adding more loops to even positions, and only even positions, will not produce 
a refined inertially arbitrary pattern since such a pattern has zero determinant, as in Lemma~\ref{badPos}.
In the next section, we explore some superpatterns that are spectrally arbitrary, and hence, refined inertially arbitrary. 

\section{Superpatterns of $\cP_{n,\alpha}$.}\label{supersection}

In the previous section we demonstrated that while the path patterns with one loop are not
spectrally arbitrary, some are refined inertially arbitrary. 
Inserting exactly one additional $\oast$ in some $\cP_{n, \alpha}$ patterns can produce a spectrally arbitrary
superpattern, as will be demonstrated in this section. 

A useful technique is the nilpotent-centralizer method introduced in \cite{GS}.
While this technique was introduced for sign patterns, it 
also applies to the zero patterns we have been discussing, 
as noted in \cite{ESV} and described in the 
next theorem. A matrix $N$ is \emph{nilpotent} if $N^k=0$ for some positive integer $k$. A 
nilpotent matrix has \emph{index} $k$ if $k$ is the smallest positive integer such that $N^k=0$.
The notation $A\circ B$ represents the Hadamard product of $A$ and $B$. We use the fact that
an entry of $A\circ B$ is nonzero if and only if the corresponding entries of both $A$ and $B$ are nonzero. 

\begin{theorem}\cite{GS}\label{NC}
Let $A$ be an order $n$ nilpotent matrix of index $n$ with pattern $\A$. 
If the only matrix $B$ in the centralizer of $A$ satisfying $B^T \circ {\A} = 0$ is the zero matrix,
then  $\A$  is spectrally arbitrary.
\end{theorem}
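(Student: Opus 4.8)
The plan is to prove Theorem~\ref{NC} (the nilpotent-centralizer method) by interpreting the map ``perturb the nilpotent matrix $A$ within its pattern and read off the characteristic polynomial'' as a smooth map between Euclidean spaces, and then showing its Jacobian at $A$ has full rank precisely under the stated centralizer hypothesis. Concretely, let $k$ be the number of $\oast$ positions in $\A$, and parametrize $Q(\A)$ near $A$ by $A + \sum_{(i,j)} t_{ij} E_{ij}$, where the sum is over the $\oast$ positions and $E_{ij}$ are the standard matrix units. Define $F:\reals^k\to\reals^n$ sending the perturbation to the tuple of coefficients $(c_1,\ldots,c_n)$ of the characteristic polynomial (the free coefficients, since the leading coefficient is fixed at $1$). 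Because $A$ is nilpotent of index $n$, its characteristic polynomial is $x^n$, so $F$ vanishes at the origin; spectral arbitrariness near $A$ is equivalent to $F$ being locally surjective, and by the inverse/implicit function theorem it suffices to show the derivative $DF$ at the origin is surjective, i.e.\ has rank $n$.

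\textbf{Key steps.} First I would record the derivative of the characteristic-polynomial coefficients in terms of a single matrix perturbation $X = \sum t_{ij}E_{ij}$. Writing $p_{A+X}(x) = \det(xI - A - X)$ and differentiating, the directional derivative of $F$ along $X$ is controlled by the traces $\operatorname{tr}(X A^{m})$ for $m=0,1,\ldots,n-1$: indeed, the standard formula $\frac{d}{dt}\det(xI - A - tX)\big|_{0} = -\det(xI-A)\,\operatorname{tr}\!\big((xI-A)^{-1}X\big)$, together with the Neumann expansion $(xI-A)^{-1} = \sum_{m\ge 0} A^m x^{-m-1}$ valid formally since $A$ is nilpotent, shows that the coefficients of $p$ are first-order insensitive to $X$ exactly when $\operatorname{tr}(A^m X)=0$ for all $m=0,\ldots,n-1$. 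Hence $DF$ fails to be surjective if and only if there is a nonzero $X$ supported on the pattern positions with $\operatorname{tr}(A^m X)=0$ for every such $m$. The second step is to convert this family of trace conditions into a single centralizer statement. The span of $\{I, A, A^2,\ldots, A^{n-1}\}$ is, because $A$ is nonderogatory (nilpotent of index $n$ means its minimal polynomial equals its characteristic polynomial $x^n$), exactly the centralizer $\mathcal{C}(A) = \{B : AB=BA\}$, which has dimension $n$. Thus the conditions $\operatorname{tr}(B X)=0$ for all $B\in\mathcal{C}(A)$, with $X$ ranging over pattern-supported matrices, are the obstruction.

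\textbf{The final step} is to rewrite this orthogonality in the Hadamard-product language of the statement. The condition that $X$ be supported on the $\oast$ positions of $\A$ means $X = X\circ\A$ (identifying $\A$ with its $0/1$ support matrix), and $\operatorname{tr}(BX) = \sum_{i,j} B_{ij}X_{ji}$; so $\operatorname{tr}(BX)=0$ for all pattern-supported $X$ forces $B_{ji}=0$ at every $\oast$ position of $\A$, i.e.\ $B^{T}\circ\A = 0$. Therefore a nonzero obstructing $X$ exists if and only if there is a nonzero $B\in\mathcal{C}(A)$ with $B^{T}\circ\A = 0$. Contrapositively, if the only such $B$ is the zero matrix, then $DF$ is surjective, $F$ is locally open near the origin, and every monic degree-$n$ polynomial sufficiently close to $x^n$ is realized; a standard scaling argument (replacing $A$ by $sA$ and rescaling the target coefficients by powers of $s$, exploiting the homogeneity of the coefficient map under $A\mapsto sA$) then upgrades local surjectivity to all of $\reals^n$, so $\A$ is spectrally arbitrary.

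\textbf{The main obstacle} I expect is the bookkeeping that turns the Jacobian nonvanishing into the clean centralizer condition: one must correctly identify that the left null space of $DF$, viewed inside the space of all matrices via the trace pairing, is spanned by the powers $A^m$, invoke nonderogatoryness to replace this span by the full centralizer $\mathcal{C}(A)$, and track the transpose/conjugate carefully so the Hadamard condition comes out as $B^{T}\circ\A=0$ rather than $B\circ\A=0$. The analytic passage from the rank condition to genuine spectral arbitrariness (local via the inverse function theorem, then global via homogeneity) is routine once the algebra is set up, so the real content is the linear-algebraic identification of the obstruction space.
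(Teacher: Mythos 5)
The paper does not actually prove this theorem: it is quoted from Garnett and Shader \cite{GS} and used as a black box, so there is no in-paper proof to compare against. Your reconstruction is essentially the argument of \cite{GS}: Jacobi's formula plus the Neumann expansion of $(xI-A)^{-1}$ (a finite sum since $A$ is nilpotent) identifies the derivative of the coefficient map in the direction $X$ as the vector $\bigl(-\operatorname{tr}(A^mX)\bigr)_{m=0}^{n-1}$; nonderogatoriness of a nilpotent of index $n$ lets you replace the span of $\{I,A,\dots,A^{n-1}\}$ by the full centralizer; the trace pairing converts annihilation of all pattern-supported $X$ into $B^T\circ\A=0$; and the implicit function theorem plus the scaling $A\mapsto sA$ upgrades local to global surjectivity. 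All of these ingredients are correct and correctly assembled, and for zero patterns (where $\oast$ entries are unrestricted) there is no issue with small perturbations leaving $Q(\A)$.

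One sentence in your second paragraph is wrong as literally stated and should be repaired: ``$DF$ fails to be surjective if and only if there is a nonzero $X$ supported on the pattern positions with $\operatorname{tr}(A^mX)=0$ for every such $m$.'' That is the condition for $DF$ to have nontrivial \emph{kernel}, which is automatic whenever the pattern has more than $n$ nonzero entries (as any spectrally arbitrary pattern must), so the biconditional cannot be right. The correct dual statement --- which you do articulate in your final two paragraphs via the left null space and the pairing $\operatorname{tr}(BX)$ with $B\in\mathcal{C}(A)$ --- is that $DF$ fails to be surjective if and only if some nonzero $B=\sum_m c_mA^m$ satisfies $\operatorname{tr}(BX)=0$ for all pattern-supported $X$, i.e.\ $B^T\circ\A=0$. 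With that sentence replaced, the proof is sound and matches the source the paper cites.
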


For example, using  the nilpotent-centralizer method, Garnett and Shader~\cite{GS} demonstrated
 that $$\T_n = \left[ \begin{array}{ccccc}
\oast	& \oast 	& 0		    & \cdots& 0 \\
\oast 	& 0 		& \ddots 	& \ddots& \vdots \\
0 		& \ddots 	& \ddots 	&\oast	& 0 \\
\vdots 	& \ddots    & \oast 	& 0 		&\oast \\
0 		& \cdots	& 0 		&\oast	& \oast \end{array} \right]$$ is spectrally arbitrary for $n\geq 2$. 
For $n\geq 3$, let 
$$\W_{n} = \left[ \begin{array}{cccccc}
\oast	& \oast 	& 0		    & \cdots& \cdots    & 0 \\
\oast 	& 0 		& \ddots 	& \ddots&           & \vdots \\
0 		& \ddots 	& \ddots 	&\oast	& \ddots    & \vdots \\
\vdots 	& \ddots    & \oast 	& 0 	&\oast      & 0\\
\vdots 	&       	& \ddots 	&\oast	& \oast     &\oast \\
0       & \cdots    & \cdots    &0      & \oast     & 0\end{array} \right] = \left[ \begin{array}{cc}
\T_{n-1} 	& \mathbf{e}^T \\
\mathbf{e}	& 0 \end{array} \right]$$
with $\mathbf{e} = \left[0,\ldots,0, 1\right].$

\begin{theorem}
If $n\geq 3$, then $\W_{n}$ is a spectrally arbitrary pattern.
\end{theorem}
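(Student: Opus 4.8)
The plan is to apply the nilpotent--centralizer method of Theorem~\ref{NC}. A first instinct is to realize $\W_n$ by an irreducible (hence non-derogatory) nilpotent tridiagonal matrix, but this route can be obstructed: in $D(\W_n)$ the pendant vertex $n$ lies on only one cycle, the $2$-cycle on $\{n-1,n\}$, so every composite $n$-cycle must use it. Already for $n=4$ one computes via Lemma~\ref{cycleprod} that $E_4$ is the product of the two end $2$-cycle products, which cannot vanish when all entries are nonzero; hence there is no \emph{irreducible} nilpotent realization. So instead I would build a \emph{reducible} index-$n$ nilpotent realization out of the Garnett--Shader machinery for $\T_{n-1}$.

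Concretely, let $M$ be the index-$(n-1)$ nilpotent realization of $\T_{n-1}$ supplied by \cite{GS}; since an index-$(n-1)$ nilpotent tridiagonal matrix cannot have a zero on either off-diagonal, $M$ is irreducible tridiagonal. Set
$$N = \begin{bmatrix} M & \mathbf{e}^T \\ \mathbf{0} & 0\end{bmatrix} \in Q(\W_n),$$
where the $(n,n-1)$ entry has been set to $0$ (permissible since $\W_n$ is a zero pattern) and $\mathbf{e}^T$ is the column $e_{n-1}\in\reals^{n-1}$. Because $N^k = \left[\begin{smallmatrix} M^k & M^{k-1}\mathbf{e}^T \\ \mathbf{0} & 0\end{smallmatrix}\right]$ and $M^{n-1}=0$, the matrix $N$ is nilpotent; and $N^{n-1}\neq 0$ since $M^{n-2}\mathbf{e}^T\neq 0$, because $e_{n-1}$ is a cyclic vector of the irreducible tridiagonal $M$. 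Thus $N$ has index $n$, as Theorem~\ref{NC} requires.

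For the centralizer condition I would write $B = \left[\begin{smallmatrix} B_{11} & b \\ c^T & \delta\end{smallmatrix}\right]$ and expand $NB=BN$ blockwise. Since $\W_n$ has a symmetric pattern, the condition $B^T\circ\W_n=0$ simply says that $B$ vanishes on the support of $\W_n$: this forces $B_{11}$ to vanish on the support of $\T_{n-1}$, forces $b_{n-1}=c_{n-1}=0$, and leaves $\delta$ free. The $(2,1)$ block gives $c^T M=0$, so $c$ is a left null vector of $M$; as the null vectors of an irreducible tridiagonal matrix have no zero entries, $c_{n-1}=0$ forces $c=0$. The $(1,1)$ block then reduces to $MB_{11}=B_{11}M$, and since $B_{11}$ also vanishes on the support of $\T_{n-1}$, the centralizer computation that \cite{GS} carried out to prove $\T_{n-1}$ spectrally arbitrary gives $B_{11}=0$. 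Finally the $(1,2)$ block becomes $Mb=-\delta\,\mathbf{e}^T$; because $e_{n-1}\notin\operatorname{range}(M)$ (its pairing with the nonvanishing left null vector is nonzero), we get $\delta=0$, whence $Mb=0$, and $b_{n-1}=0$ against the nonvanishing right null vector forces $b=0$. Therefore $B=0$, and $\W_n$ is spectrally arbitrary by Theorem~\ref{NC}.

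The main obstacle is conceptual rather than computational: recognizing that $\W_n$ need not admit an irreducible nilpotent realization, and that one should instead stack the Garnett--Shader realization of $\T_{n-1}$ into a reducible but still index-$n$ nilpotent matrix. Once this is set up, the block reduction offloads the entire $B_{11}$ computation onto \cite{GS}, and the only genuinely new work is the three short null-vector/range arguments that annihilate the border data $b$, $c$, and $\delta$; the delicate point there is the nonvanishing of the $(n-1)$-st entries of the null vectors of the irreducible tridiagonal $M$.
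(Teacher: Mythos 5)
Your proof is correct and follows essentially the same route as the paper: the same bordered nilpotent matrix $N=\left[\begin{smallmatrix} M & \mathbf{e}^T\\ 0 & 0\end{smallmatrix}\right]$ built from the Garnett--Shader realization of $\T_{n-1}$, the same appeal to Theorem~\ref{NC}, and the same blockwise elimination of $c$, $B_{11}$, $b$, and $\delta$ (the paper certifies the nilpotent index via the proper Hessenberg form and kills the border entries via an invertible principal submatrix rather than your cyclic-vector and null-space arguments, but these are interchangeable). One small caution: your auxiliary claim that null vectors of an irreducible tridiagonal matrix have \emph{no} zero entries is false in general (interior entries can vanish); only the nonvanishing of the first and last entries holds, but that is exactly the instance your argument uses, so the proof stands.
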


\begin{proof} Let $n\geq 3$ and
    $$M = \left[ \begin{array}{ccccc}
m_{1, 1}& m_{1, 2} 	& 0		    & \cdots    & 0 \\
m_{2, 1}& 0 		& \ddots 	& \ddots    & \vdots \\
0 		& \ddots 	& \ddots 	&m_{n-3, n-2}	& 0 \\
\vdots 	& \ddots    & m_{n-2, n-3} 	& 0 	    & m_{n-2, n-1} \\
0 		& \cdots	& 0 		&m_{n-1, n-2}	& m_{n-1, n-1} \end{array} \right]$$ be the nilpotent matrix used in~\cite[Corollary~8]{GS} to show that $\T_{n-1}$ is spectrally arbitrary.  This means that if $A$ is a matrix in the centralizer
of $M$ with $A^T\circ\T_{n-1}=0$, then $A=0$.
Note also that the $\oast$ entries of 
$\T_{n-1}$ are nonzero in $M$. 

Let $N = \left[ \begin{array}{cc}
M & \mathbf{e}^T \\
0 	& 0 \end{array} \right].$  Then $N$ 
is a nilpotent matrix with pattern $\W_{n}$. 
Since $N$ is a proper Hessenberg matrix,
$N$ has nilpotent index $n$ (see, for example \cite[Theorem 7.4.4]{GV}).

Suppose $B$ is a matrix such that
$B^T\circ \W_{n}=0$.
 Then $B = \left[ \begin{array}{cc}
A     & \mathbf{y} \\
\mathbf{x} & a \end{array} \right]$, for some $\mathbf{x} = \left[x_1, x_2,\ldots,x_{n-2}, 0\right]$, $\mathbf{y} 
= \left[y_1, y_2,\ldots,y_{n-2}, 0\right]^T$, and $a \in \reals$, such that  $A^T\circ \T_{n-1}=0$.  Suppose that $B$ is in the centralizer of $N$. We claim that $B=0$. Observe that
\begin{equation*}\label{NB}
    NB = \left[ \begin{array}{cc}
MA+\mathbf{e}^T\mathbf{x}     & M\mathbf{y} + a\mathbf{e}^T \\
0                               & 0 \end{array} \right]\qquad \mbox{\rm and \qquad }
    BN = \left[ \begin{array}{cc}
AM     & A\mathbf{e}^T \\
\mathbf{x}M & \mathbf{x}\mathbf{e}^T \end{array} \right].
\end{equation*}
Let $L$ be the principal submatrix of $M$ consisting of the first $n-1$ rows and columns.
Since $L$ has exactly one transversal, $L$ is nonsingular. With $NB=BN$, it follows that $\mathbf{x}M = \mathbf{0}$ 
and hence $\mathbf{x}=\mathbf{0}$ since $L$ is nonsingular. 
Since $\mathbf{x} = \mathbf{0}$, $MA = AM$, and since the only matrix $A$ in the centralizer of $M$ with $A^T\circ\T_{n-1}=0$ is $0$, $A = 0$. This implies that $M\mathbf{y} + a\mathbf{e}^T = \mathbf{0}$. 
Ignoring the last row of this matrix equation gives us the equation
$L[y_1,\ldots,y_{n-2}]^T=\mathbf{0}$. 
Since $L$ is invertible,
we have $\mathbf{y}=\mathbf{0}$.
It follows that $a=0$ and hence $B=0$.
Therefore $\W_{n}$ is spectrally arbitrary for all $n\geq 3$ by Theorem~\ref{NC}.
\end{proof}

\begin{remark}{\rm 
While $\T_n$ has a signing that is spectrally
arbitrary~\cite{GS},  there is no signing of the nonzero
entries of $\W_n$ that is spectrally arbitrary. In particular,
$\W_n$ has exactly one nonzero transversal, so a nilpotent realization
requires a zero in one of the $\oast$ positions.
}\end{remark}

\section{Classifying patterns of order 3}\label{order3}

By Lemma~\ref{kcycle}, if $\A$ is an order $3$ irreducible pattern that is inertially arbitrary, then $D(\A)$ must have at least one 
loop and either a proper two-cycle or another loop, in order to have a composite $2$-cycle. Hence, accounting for irreducibility, we have
the following: 

\begin{lemma}\label{min5}
If $\A$ is an irreducible inertially arbitrary zero pattern of order $3$, then
$\A$ must have at least $5$ $\oast$ entries.
\end{lemma}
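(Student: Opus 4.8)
The plan is to combine the cycle-existence conditions from Lemma~\ref{kcycle} with the fact that irreducibility forces $D(\A)$ to be strongly connected, and then to bound the number of $\oast$ entries by a short case analysis. Throughout I would separate the $\oast$ entries into \emph{loops} (diagonal entries) and \emph{non-loop arcs} (off-diagonal entries) and count each type separately.

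First I would record two consequences of the hypotheses. Since $\A$ is inertially arbitrary, Lemma~\ref{kcycle} applied with $k=1$ forces $D(\A)$ to contain at least one loop, and applied with $k=2$ forces a composite $2$-cycle; on three vertices such a composite $2$-cycle must be either a proper $2$-cycle or a pair of disjoint loops. Separately, irreducibility means $D(\A)$ is strongly connected, so from each vertex there is a path to each other vertex; the first arc of such a path is a non-loop out-arc, and likewise each vertex has a non-loop in-arc. Hence every vertex has non-loop in-degree and out-degree at least $1$, and summing out-degrees shows $D(\A)$ has at least three non-loop arcs.

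Next I would split into the two cases coming from the composite $2$-cycle. If $D(\A)$ has a proper $2$-cycle on two vertices $i,j$, this uses the two arcs $i\to j$ and $j\to i$; by strong connectivity the remaining vertex $k$ needs a non-loop in-arc and a non-loop out-arc, and these are distinct from the two arcs of the $2$-cycle (which do not meet $k$), giving at least four non-loop arcs. Adding the guaranteed loop yields at least five $\oast$ entries. If instead $D(\A)$ has no proper $2$-cycle, then the composite $2$-cycle must come from two disjoint loops, so there are at least two loops; combined with the at least three non-loop arcs forced by strong connectivity, this again gives at least five entries.

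The steps are elementary, so I do not expect a deep obstacle; the only care needed is in the arc counting. The crucial point is to verify that in the proper-$2$-cycle case the two arcs incident to the third vertex are genuinely distinct from the arcs of the $2$-cycle, so that the non-loop count is truly at least four, and to confirm that the two cases are exhaustive, which they are because a composite $2$-cycle on three vertices must be one of the two listed types.
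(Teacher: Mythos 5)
Your proposal is correct and follows essentially the same route as the paper, which derives the bound from Lemma~\ref{kcycle} (a loop and a composite $2$-cycle, the latter being either a proper $2$-cycle or a second loop) combined with irreducibility; the paper leaves the arc counting implicit, and your case analysis simply makes that counting explicit and verifies the arcs are distinct. No gaps.
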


The next result describes the irreducible spectrally arbitrary patterns of order $3$ as demonstrated in~\cite{ESV}. 

\begin{theorem}\cite[Corollary~3.11]{ESV}\label{SAP3}
Given $\A$ is an 
irreducible zero pattern of order $3$, then $\A$ is spectrally arbitrary if and only if $\A$ is equivalent to 
a superpattern of
$$\left[ \begin{array}{ccc}
\oast & \oast & 0 \\
\oast & 0 & \oast \\
\oast & 0 & 0\end{array} \right], \quad
\left[ \begin{array}{ccc}
\oast & \oast & 0 \\
0 & 0 & \oast \\
\oast & \oast & 0\end{array} \right], \quad 
\left[ \begin{array}{ccc}
\oast & \oast & 0 \\
\oast & 0 & \oast \\
0  & \oast & \oast \end{array} \right], \quad {\rm or} \quad 
\left[ \begin{array}{ccc}
\oast & \oast & 0 \\
\oast & \oast & \oast \\
0  & \oast & 0 \end{array} \right].$$
\end{theorem}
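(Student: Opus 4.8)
The plan is to treat the four displayed matrices as the \emph{minimal} irreducible spectrally arbitrary patterns (SAPs) of order $3$ and prove both implications around that. For the ``if'' direction I would first check directly that each of the four is spectrally arbitrary. Writing a general $A\in Q(\A)$ and invoking Lemma~\ref{cycleprod}, the realizability question reduces to surjectivity of the map sending the nonzero entries to the triple $(E_1,E_2,E_3)$; for each of the four patterns one verifies this map is onto $\reals^3$ (or, uniformly, exhibits a nilpotent realization of index $3$ and applies Theorem~\ref{NC}). For the first pattern, for instance, one computes $E_1=a_{11}$, $E_2=-a_{12}a_{21}$, and $E_3=a_{12}a_{23}a_{31}$, which is visibly surjective. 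Since $Q(\A)\subseteq Q(\B)$ whenever $\B$ is a superpattern of $\A$, and since permutation similarity and transposition preserve the set of realizable characteristic polynomials, it follows at once that every pattern equivalent to a superpattern of one of the four is spectrally arbitrary.

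For the ``only if'' direction, let $\A$ be an irreducible order $3$ SAP. I would first gather necessary conditions to prune the search. Since spectrally arbitrary implies inertially arbitrary, Lemma~\ref{min5} forces at least $5$ nonzero entries and Lemma~\ref{kcycle} forces a composite $k$-cycle for each $k\in\{1,2,3\}$. Moreover $\A$ realizes $x(x^2+\ell^2)$, i.e.\ it allows refined inertia $(0,0,1,2)$, so Lemma~\ref{2cycle} forces $D(\A)$ to contain a \emph{proper} $2$-cycle; in particular any configuration whose only contribution to $E_2$ is a sum of products of loops is excluded. Finally irreducibility forces $D(\A)$ to be strongly connected, hence to have at least three off-diagonal arcs.

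The heart of the argument is then a finite enumeration, up to equivalence (permutation similarity together with transposition), of the irreducible digraphs meeting all of these conditions, organized by the number of nonzero entries. At $5$ entries the constraints leave only a short list: a single loop, a proper $2$-cycle, and two further off-diagonal arcs completing a composite $3$-cycle. Computing $(E_1,E_2,E_3)$ in each case shows that, up to equivalence, exactly two of them are spectrally arbitrary, namely the first two displayed patterns, while every other survivor fails surjectivity for a structural reason exposed by the computation; for instance the configuration carrying the two adjacent $2$-cycles $\{1,2\}$ and $\{2,3\}$ has no proper $3$-cycle, so its only composite $3$-cycle passes through the unique loop and $E_3$ vanishes identically whenever $E_1=0$. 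Passing to $6$ entries, the same method identifies the remaining two displayed patterns as the minimal irreducible SAPs that are \emph{not} superpatterns of the first two: both have no proper $3$-cycle and realize $E_3$ only through loop-plus-$2$-cycle composites, and they are inequivalent because their loops sit differently on the path $1$–$2$–$3$. One then checks that every other admissible $6$-entry digraph either already contains one of the four or fails to be spectrally arbitrary, and that nothing new arises at $7$, $8$, or $9$ entries, concluding that the minimal irreducible SAPs are precisely the four listed and hence that every irreducible SAP is equivalent to a superpattern of one of them.

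The main obstacle is guaranteeing completeness of this enumeration, and especially ruling out a ``rogue'' minimal irreducible SAP at $6$ entries beyond the two expected ones. A subtlety that makes this delicate is that one cannot simply delete nonzero entries from a large SAP until reaching a minimal one: deleting an entry may destroy irreducibility while \emph{preserving} spectral arbitrariness, since a reducible pattern can be spectrally arbitrary — for example the block-diagonal pattern with blocks $[\oast]$ and $\C_2$ realizes every monic cubic, as every real cubic has a real root. Consequently each candidate digraph must be tested directly for surjectivity of its coefficient map rather than reduced by a monotone deletion argument, and it is precisely the necessary conditions from Lemmas~\ref{kcycle}, \ref{2cycle}, and~\ref{min5} that keep this case analysis finite and tractable.
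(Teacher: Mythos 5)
First, a point of comparison: the paper does not prove this statement at all --- it imports it verbatim as \cite[Corollary~3.11]{ESV} --- so there is no internal argument to measure yours against. Judged on its own terms, your architecture is sound and your supporting computations are correct: the four displayed patterns are indeed spectrally arbitrary by direct surjectivity of the coefficient map from Lemma~\ref{cycleprod} (your formulas $E_1=a_{11}$, $E_2=-a_{12}a_{21}$, $E_3=a_{12}a_{23}a_{31}$ for the first pattern are right); spectral arbitrariness does force a proper $2$-cycle via Lemma~\ref{2cycle} applied to $x(x^2+1)$; and your warning that reducible order-$3$ patterns can be spectrally arbitrary (so one cannot reach a minimal pattern by monotone deletion) is a correct and relevant observation.

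The genuine gap is that the ``only if'' direction is never closed: you carry out the $5$-entry case, gesture at the $6$-entry case, assert that ``nothing new arises at $7$, $8$, or $9$ entries,'' and then explicitly name the completeness of the enumeration as the main unresolved obstacle. That is the entire content of the hard direction, so as written this is a plan rather than a proof. The gap is closable, and more cheaply than by stratifying on the number of $\oast$ entries: split instead on whether $D(\A)$ has a proper $3$-cycle. On three vertices any proper $2$-cycle and any proper $3$-cycle share exactly one arc, so the union of one loop, one proper $2$-cycle, and one proper $3$-cycle always consists of exactly five arcs and is equivalent to the first or second displayed pattern according as the loop does or does not lie on the $2$-cycle; since Lemmas~\ref{kcycle} and~\ref{2cycle} supply the loop and the proper $2$-cycle, every irreducible spectrally arbitrary pattern whose digraph has a proper $3$-cycle contains one of the first two patterns. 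If there is no proper $3$-cycle, strong connectivity forces the off-diagonal arcs to be precisely two $2$-cycles sharing a vertex (three off-diagonal arcs that are strongly connected form a directed triangle, and any fifth off-diagonal arc creates one), the required composite $3$-cycle then forces a loop at an end of the resulting path, one loop alone gives the non-spectrally-arbitrary patterns $\cP_{3,\alpha}$, and two or three loops give a superpattern of the third or fourth displayed pattern. Recasting your enumeration along these lines is what would turn your sketch into a complete argument.
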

Next we classify the irreducible patterns of order $3$ that are refined inertially arbitrary.  

\begin{theorem}\label{RIAP3}
Given $\A$ is an irreducible order $3$ pattern, then  
$\A$ is refined inertially arbitrary   
if and only if $\A$ is equivalent to a superpattern of either 
$$\left[ \begin{array}{ccc}
\oast & \oast & 0 \\
\oast & 0 & \oast \\
\oast & 0 & 0\end{array} \right], \quad 
\left[ \begin{array}{ccc}
\oast & \oast & 0 \\
0 & 0 & \oast \\
\oast & \oast & 0\end{array} \right],\quad {\rm or} \quad 
\left[ \begin{array}{ccc}
\oast & \oast & 0 \\
\oast & 0 & \oast \\
0 & \oast & 0 \end{array} \right].
$$
\end{theorem}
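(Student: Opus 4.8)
The plan is to prove both directions of the biconditional, treating the three displayed patterns as the minimal refined inertially arbitrary patterns and using throughout that any eigenvalue property is inherited by superpatterns.

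For sufficiency, the first two patterns are precisely the first two patterns of Theorem~\ref{SAP3}, hence spectrally arbitrary and therefore refined inertially arbitrary. The third pattern, call it $E$, is \emph{not} on the spectrally arbitrary list, so I would handle it directly. Writing a general realization $\begin{bmatrix}a&b&0\\c&0&d\\0&e&0\end{bmatrix}$ and applying Lemma~\ref{cycleprod} gives characteristic polynomial $x^3-ax^2-(bc+de)x+ade$, so $E$ realizes $x^3+c_2x^2+c_1x+c_0$ exactly when $c_2\neq0$ or $c_0=0$; the only obstruction is $c_2=0$ together with $c_0\neq0$, i.e.\ trace zero with no zero eigenvalue. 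Running through the eight refined inertias of order $3$ with $n_+\geq n_-$ (which suffice by the reversal symmetry noted in Section~\ref{defines}), every refined inertia with $n_z\geq1$ forces $c_0=0$ and every refined inertia with $n_z=0$ admits an eigenvalue configuration of nonzero sum, hence $c_2\neq0$; so all eight are realizable and $E$ is refined inertially arbitrary. Since the property passes to superpatterns, every superpattern of the three listed patterns is refined inertially arbitrary.

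For necessity, suppose $\A$ is irreducible and refined inertially arbitrary. Then $\A$ has at least five nonzero entries (Lemma~\ref{min5}), a loop and a composite $3$-cycle (Lemma~\ref{kcycle}), and, since $\A$ must realize $(0,0,1,2)$, a proper $2$-cycle (Lemma~\ref{2cycle}); moreover $D(\A)$ is strongly connected. After a permutation I place the proper $2$-cycle on $\{1,2\}$ and split on whether $D(\A)$ contains a proper $3$-cycle. If it does, then together with the $2$-cycle on $\{1,2\}$ and any single loop, $\A$ contains one of the six five-arc patterns arising from the two $3$-cycle orientations and the three loop positions; a short check shows each is equivalent (via permutation and/or transpose) to the first pattern or the second pattern, so $\A$ is equivalent to a superpattern of one of these.

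The heart of the argument is the case with no proper $3$-cycle, which I expect to be the main obstacle, since this is exactly where the new non-spectrally-arbitrary pattern $E$ appears. Here I would first show that a second proper $2$-cycle is forced: if vertex $3$ had any incoming arc, the absence of a proper $3$-cycle (given the fixed $2$-cycle on $\{1,2\}$) together with the absence of a second $2$-cycle would forbid every outgoing arc from $3$, contradicting strong connectivity. Relabelling so the second $2$-cycle is $\{2,3\}$, the no-$3$-cycle condition forces the arcs $(1,3)$ and $(3,1)$ to be absent, so the only remaining freedom lies in the loops; a composite $3$-cycle then requires a loop at $1$ or at $3$, and in either case $\A$ contains a copy of $E$. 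The delicate point to rule out is the tempting configuration with $2$-cycles $\{1,2\}$ and $\{2,3\}$ whose only loop sits at the shared vertex $2$: it has no composite $3$-cycle and so fails Lemma~\ref{kcycle}, confirming that such a pattern is not refined inertially arbitrary.
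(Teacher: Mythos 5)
Your proof is correct, but it reaches the result by a route that differs from the paper's in two substantive ways. For sufficiency of the third pattern (which is $\cP_{3,1}$ in the paper's notation), the paper simply invokes Theorem~\ref{pathRIAP}, whose proof runs through the properly signed nest machinery of Lemma~\ref{pathLemma0} and an inductive block decomposition valid for all $n$; you instead compute the characteristic polynomial $x^3-ax^2-(bc+de)x+ade$ directly and observe that the only unattainable monic cubics are those with zero trace and nonzero determinant, which no refined inertia of order $3$ forces. Your calculation is more elementary and self-contained, and it transparently exhibits why $\cP_{3,1}$ fails to be spectrally arbitrary while still being refined inertially arbitrary, though it does not scale to the general path patterns the paper needs elsewhere. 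For necessity, the paper asserts without argument that every irreducible order-$3$ pattern is a superpattern of $\HH_1$ or $\HH_2$ and then, in the $\HH_2$ branch, must separately dispose of proper superpatterns of $\cP_{3,2}$; you instead split on the existence of a proper $3$-cycle, derive the second proper $2$-cycle from strong connectivity when no $3$-cycle exists, and note that the absence of a $3$-cycle kills the arcs between the end vertices, so the only remaining freedom is the loop placement. This makes the exclusion of $\cP_{3,2}$ (loop only at the shared vertex) fall out of Lemma~\ref{kcycle} rather than Theorem~\ref{patheorem}, and it supplies a proof of the $\HH_1$/$\HH_2$ dichotomy that the paper leaves implicit. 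Both arguments rely on the same inputs (Lemmas~\ref{kcycle} and~\ref{2cycle}, Theorem~\ref{SAP3}, and superpattern inheritance), and your finite checks in the $3$-cycle case do verify as claimed.
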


\begin{proof}
Suppose $\A$ is refined inertially arbitrary.
If $\A$ is irreducible, then $\A$ is a superpattern of either 
$$\HH_1=\left[ \begin{array}{ccc}
0 & \oast & 0 \\
0 & 0 & \oast \\
\oast & 0 & 0\end{array} \right], \quad {\rm or} \quad 
\HH_2=\left[ \begin{array}{ccc}
0 & \oast & 0 \\
\oast & 0 & \oast \\
0 & \oast & 0 \end{array} \right].
$$
By Lemma~\ref{2cycle}, $D(\A)$ must have a proper $2$-cycle and a loop. If
 $\A$ is a superpattern pattern of $\HH_1$, then $\A$ is equivalent to a superpattern of one of the first two patterns
 in Theorem~\ref{SAP3}. Suppose $\A$ is a superpattern of $\HH_2$. Then $\A$ is equivalent to a superpattern
 of $\cP_{3,1}$ or $\cP_{3,2}$. If $\A$ is a superpattern of $\cP_{3,1}$ then $\A$ is 
 refined inertially arbitrary by Theorem~\ref{pathRIAP}.
  If $\A=\cP_{3,2}$  then $\A$ is not refined inertially arbitrary by Theorem~\ref{patheorem}. 
 If $\A$ is a proper superpattern of
 $\cP_{3,2}$, then $\A$ is one of the first two patterns in Theorem~\ref{SAP3},
 or $\A$ is a superpattern of $\cP_{3,1}$. 
\end{proof}

\begin{corollary}
If $\A$ is an   
irreducible refined inertially arbitrary pattern of order $3$ that is not spectrally arbitrary, 
then $\A$ is equivalent to $\cP_{3,1}$.  
\end{corollary}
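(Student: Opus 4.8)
The plan is to combine the classification of refined inertially arbitrary patterns from Theorem~\ref{RIAP3} with the classification of spectrally arbitrary patterns from Theorem~\ref{SAP3}. By Theorem~\ref{RIAP3}, any irreducible refined inertially arbitrary pattern $\A$ of order $3$ is equivalent to a superpattern of one of the three listed patterns $H_1, H_2, H_3$ (the first, second, and third matrices in that theorem). The first two of these three patterns already appear in Theorem~\ref{SAP3}, so any superpattern of either is spectrally arbitrary. Thus if $\A$ is \emph{not} spectrally arbitrary, it cannot be equivalent to a superpattern of the first two patterns in Theorem~\ref{RIAP3}; it must be equivalent to a superpattern of the third pattern, which I note is exactly $\cP_{3,1}$.

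First I would identify the third pattern $\bigl[\begin{smallmatrix}\oast & \oast & 0\\ \oast & 0 & \oast\\ 0 & \oast & 0\end{smallmatrix}\bigr]$ in Theorem~\ref{RIAP3} as $\cP_{3,1}$ (a path pattern with its single loop in the first row). Then I would argue that $\A$ must be $\cP_{3,1}$ itself rather than a proper superpattern: any proper superpattern of $\cP_{3,1}$ obtained by inserting one additional $\oast$ creates a second loop or completes a larger cycle structure, and one can check that each such proper superpattern is equivalent to a superpattern of one of the first two patterns in Theorem~\ref{SAP3}, hence spectrally arbitrary. This is precisely the case analysis already carried out in the final lines of the proof of Theorem~\ref{RIAP3}, where it is shown that a proper superpattern of $\cP_{3,2}$ (equivalently $\cP_{3,1}$ under the equivalence $\cP_{3,\alpha}\simeq\cP_{3,n-\alpha+1}$) is either one of the first two patterns of Theorem~\ref{SAP3} or a superpattern of $\cP_{3,1}$.

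The main step is therefore to verify that $\cP_{3,1}$ itself is refined inertially arbitrary but not spectrally arbitrary, and that it is the unique minimal such pattern. Refined inertial arbitrariness of $\cP_{3,1}$ is immediate from Theorem~\ref{pathRIAP}, and the failure of spectral arbitrariness follows from the observation made right after the definition of $\cP_{n,\alpha}$: there is no matrix in $Q(\cP_{3,1})$ with characteristic polynomial $x^3 + x^0$-type obstruction, i.e. $\cP_{3,1}$ cannot realize a polynomial with the coefficient of $x^{n-1}$ zero while the coefficient of $x^{n-3}$ is nonzero. Equivalently, $\cP_{3,1}$ does not appear in the list of Theorem~\ref{SAP3}, so it is not spectrally arbitrary.

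The hard part, if any, is purely bookkeeping: confirming that every equivalence class of irreducible refined inertially arbitrary order $3$ patterns that fails spectral arbitrariness collapses to the single pattern $\cP_{3,1}$, using the equivalences (permutation similarity and transposition) freely. Since Theorem~\ref{RIAP3} and Theorem~\ref{SAP3} have already done the heavy lifting of classification, the corollary reduces to cross-referencing the two lists and peeling off the two patterns common to both. I would present the argument as a short deduction: intersect the two classifications, discard the overlap, and conclude that the unique remaining equivalence class is that of $\cP_{3,1}$.
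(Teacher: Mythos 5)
Your overall strategy is the right one and is the same as the paper's (implicit) argument: intersect the classification of Theorem~\ref{RIAP3} with that of Theorem~\ref{SAP3}, observe that the first two generators are common to both lists, and reduce to showing that $\cP_{3,1}$ is the unique surviving pattern. However, two of your supporting claims are false as stated. First, your assertion that every proper superpattern of $\cP_{3,1}$ ``is equivalent to a superpattern of one of the first two patterns in Theorem~\ref{SAP3}'' fails for the two diagonal augmentations: adding the $(3,3)$ entry or the $(2,2)$ entry to $\cP_{3,1}$ produces exactly the \emph{third} and \emph{fourth} patterns of Theorem~\ref{SAP3}, respectively, and neither of these contains a proper $3$-cycle, so neither can be a superpattern of the first two patterns (each of which has a proper $3$-cycle). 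The conclusion you need --- that every proper superpattern of $\cP_{3,1}$ is spectrally arbitrary --- is still true, but the correct justification is that each of the four single-entry augmentations is equivalent to a superpattern of \emph{some} pattern in the list of Theorem~\ref{SAP3} (the off-diagonal augmentations at $(3,1)$ and $(1,3)$ land on superpatterns of the first pattern or its transpose; the diagonal augmentations land on the third and fourth patterns).

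Second, your parenthetical claim that $\cP_{3,2}$ is ``equivalently $\cP_{3,1}$ under the equivalence $\cP_{3,\alpha}\simeq\cP_{3,n-\alpha+1}$'' is wrong: for $n=3$ and $\alpha=2$ one gets $n-\alpha+1=2$, so that equivalence fixes $\cP_{3,2}$, and indeed $\cP_{3,2}$ and $\cP_{3,1}$ cannot be equivalent since Theorem~\ref{patheorem} shows $\cP_{3,2}$ is \emph{not} refined inertially arbitrary while $\cP_{3,1}$ is. Consequently the case analysis at the end of the proof of Theorem~\ref{RIAP3}, which treats proper superpatterns of $\cP_{3,2}$, does not supply the analysis you need for proper superpatterns of $\cP_{3,1}$; you must carry out that (short, four-case) check yourself as sketched above. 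With these two repairs the argument is complete and matches the intended deduction from Theorems~\ref{RIAP3} and~\ref{SAP3}.
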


\begin{theorem}\label{minIAP} If an irreducible zero pattern of order $3$ is inertially
arbitrary, but not refined inertially arbitrary, then it is equivalent to
$$\A_3 = \left[ \begin{array}{ccc}
\oast & \oast & 0 \\
0 & \oast & \oast \\
\oast & 0 & 0 \end{array} \right].$$
\end{theorem}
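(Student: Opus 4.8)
The plan is to prove the two implications separately. That $\A_3$ qualifies is immediate: it is the pattern $\A_n$ of Theorem~\ref{IAPN} with $n=3$, hence inertially arbitrary, while its digraph has no proper $2$-cycle (the two loops furnish its only composite $2$-cycle), so Lemma~\ref{2cycle} forbids the refined inertia $(0,0,1,2)$ and $\A_3$ is not refined inertially arbitrary. The substance of the theorem is the converse, which I would obtain through a structural enumeration anchored on the classification already available in Theorem~\ref{RIAP3}.

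So let $\A$ be irreducible, inertially arbitrary, and not refined inertially arbitrary. By Lemma~\ref{min5} it has at least five nonzero entries, and by Lemma~\ref{kcycle} its digraph carries composite $k$-cycles for $k=1,2,3$; in particular $\A$ has a loop. Every irreducible order $3$ pattern is a superpattern of one of the two loopless skeletons $\HH_1$ (the directed $3$-cycle) or $\HH_2$ (the path with two $2$-cycles) introduced in the proof of Theorem~\ref{RIAP3}, so I would split the argument according to which skeleton $\A$ refines.

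In the case $\A\supseteq\HH_2$, the two proper $2$-cycles are already present, and the composite $3$-cycle demanded by Lemma~\ref{kcycle} must be supplied either by a loop at an end vertex or by an arc closing a proper $3$-cycle; a loop at the centre alone gives no composite $3$-cycle, as in $\cP_{3,2}$, which is not even inertially arbitrary. Each resulting inertially arbitrary configuration is equivalent to a superpattern of $\cP_{3,1}$ or of one of the first two patterns of Theorem~\ref{RIAP3}, hence refined inertially arbitrary, contradicting the hypothesis. Similarly, if $\A\supseteq\HH_1$ and its digraph also contains one of the three reverse off-diagonal arcs, then $\A$ acquires a proper $2$-cycle and, together with its $3$-cycle and a loop, is equivalent to a superpattern of the first pattern of Theorem~\ref{RIAP3}, again refined inertially arbitrary. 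Thus $\A$ must be a superpattern of $\HH_1$ whose only off-diagonal entries are the three arcs of the $3$-cycle; with no proper $2$-cycle available, the composite $2$-cycle of Lemma~\ref{kcycle} forces at least two loops, and up to the cyclic symmetry of the $3$-cycle the minimal such pattern is exactly $\A_3$.

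The main obstacle is the case analysis of the preceding paragraph: one must verify, uniformly over all admissible loop sets and including the denser patterns with six or more nonzero entries, that every inertially arbitrary superpattern of $\HH_2$, and every superpattern of $\HH_1$ carrying a reverse arc, is genuinely captured by the classification of Theorem~\ref{RIAP3}. The most delicate point is the $\HH_1$ configuration carrying all three loops: by Lemma~\ref{iapdiagonal} it is inertially arbitrary, and, lacking a proper $2$-cycle, it is still not refined inertially arbitrary, so the enumeration must be pushed past the five-entry patterns to recognize it as a superpattern of $\A_3$ rather than as a separate exception, thereby confirming that $\A_3$ is the minimal, and in this sense the defining, pattern with the stated property.
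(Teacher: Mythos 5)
Your argument tracks the paper's proof almost step for step: the paper likewise reduces to the two loopless skeletons $\HH_1$ and $\HH_2$ from the proof of Theorem~\ref{RIAP3}, disposes of every inertially arbitrary superpattern of $\HH_2$ (via $\cP_{3,1}$, $\cP_{3,2}$, and the refined-inertia results of Section~\ref{pathsection}) and of every superpattern of $\HH_1$ containing a proper $2$-cycle (via the first two patterns of Theorem~\ref{SAP3}), and then forces two loops onto the bare $3$-cycle using the composite-$2$-cycle requirement of Lemma~\ref{kcycle}. One small slip along the way: $\HH_1$ together with one reverse arc and a loop is equivalent to a superpattern of the \emph{second} pattern of Theorem~\ref{RIAP3} when the loop is not incident to the resulting $2$-cycle, not always the first; this is harmless, since both patterns in that list are refined inertially arbitrary.

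The genuine gap is in your final, ``most delicate'' case, and here your proposal and the paper diverge---neither satisfactorily. You correctly observe that the $3$-cycle carrying all three loops is inertially arbitrary (Lemma~\ref{iapdiagonal}) yet, having no proper $2$-cycle, cannot realize the refined inertia $(0,0,1,2)$ by Lemma~\ref{2cycle}, and so is \emph{not} refined inertially arbitrary. But equivalence preserves the number of $\oast$ entries, so this $6$-entry pattern is not equivalent to the $5$-entry $\A_3$; declaring it ``a superpattern of $\A_3$ rather than a separate exception'' silently weakens the conclusion from ``equivalent to $\A_3$'' to ``equivalent to a superpattern of $\A_3$,'' which is not the statement being proved. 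The paper's own proof stumbles at exactly the same point, in the opposite direction: it excludes the three-loop configuration by asserting that such a pattern ``would be refined inertially arbitrary as noted in Lemma~\ref{iapdiagonal},'' but that lemma yields only inertial arbitrariness, and by Lemma~\ref{2cycle} the assertion is in fact false. So your diagnosis of this case is more accurate than the paper's treatment of it, and what you have actually uncovered is that Theorem~\ref{minIAP} as literally stated admits the three-loop pattern as a counterexample; the sustainable conclusion---and the one consistent with the superpattern phrasing of the corollary that follows---is that $\A$ is equivalent to a superpattern of $\A_3$ (that is, to $\A_3$ itself or to the $3$-cycle with three loops). As a proof of the literal statement your write-up therefore fails at this last step, unavoidably; you should either prove the amended statement explicitly or flag the discrepancy, rather than absorbing it into the phrase ``in this sense the defining pattern.''
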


\begin{proof}
Let $\A$ be inertially arbitrary. By Lemma~\ref{cycleprod}, $\A$ has an $\oast$ entry on the
main diagonal.  Note that $\A\neq\cP_{3,2}$ since any matrix with pattern $\cP_{3,2}$ is singular.
If $\A$ is a superpattern of $\cP_{3,1}$ or a proper superpattern of $\cP_{3,2}$,
 then $\A$ is refined inertially arbitrary as noted in the previous
proof. Thus, if $\A$ is not refined inertially arbitrary, then $\A$ is equivalent to a superpattern of the pattern $\B_1$
defined in the proof of Theorem~\ref{RIAP3}. 
 If $D(\A)$ has a proper $2$-cycle, then $\A$ will be equivalent to a superpattern of a
spectrally arbitrary pattern, namely one of the first two patterns in Theorem~\ref{SAP3}. Thus $D(\A)$ does not
have a proper $2$-cycle and hence,  by Lemma~\ref{cycleprod}, $\A$ must have two $\oast$ entries on the main diagonal.  
Note that $\A$ can not have three $\oast$ entries on the diagonal, otherwise it would be 
refined inertially arbitrary as noted in Lemma~\ref{iapdiagonal}.
Thus $D(\A)$ consists of a proper $3$-cycle with two loops. 
\end{proof}

\begin{corollary}
Given $\A$ is an irreducible order $3$ pattern, $\A$ 
is inertially arbitrary if and only if $\A$ is equivalent to a superpattern of 
$$ 
\left[ \begin{array}{ccc}
\oast & \oast & 0 \\
\oast & 0 & \oast \\
\oast & 0 & 0 \end{array} \right],
\left[ \begin{array}{ccc}
\oast & \oast & 0 \\
0 & 0 & \oast \\
\oast & \oast & 0 \end{array} \right], 
\left[ \begin{array}{ccc}
\oast & \oast & 0 \\
\oast & 0 & \oast \\
0 & \oast & 0 \end{array} \right],
\mbox{\rm{\quad or \quad}}
\left[ \begin{array}{ccc}
\oast & \oast & 0 \\
0 & \oast & \oast \\
\oast & 0 & 0 \end{array} \right].$$
\end{corollary}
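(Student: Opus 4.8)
The plan is to deduce this corollary by synthesizing the three structural results already established, treating the two directions of the biconditional separately. The key observation is that the four patterns in the list are precisely the union of the three refined inertially arbitrary representatives from Theorem~\ref{RIAP3} together with the pattern $\A_3$ from Theorem~\ref{minIAP}.

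For the forward direction, I would assume $\A$ is inertially arbitrary and split according to whether $\A$ is additionally refined inertially arbitrary. If $\A$ is refined inertially arbitrary, then Theorem~\ref{RIAP3} immediately shows $\A$ is equivalent to a superpattern of one of the first three listed patterns. If instead $\A$ is inertially arbitrary but not refined inertially arbitrary, then Theorem~\ref{minIAP} forces $\A$ to be equivalent to $\A_3$, which is exactly the fourth pattern in the list (and trivially a superpattern of itself). Since these two cases are exhaustive, every irreducible inertially arbitrary pattern of order $3$ is accounted for.

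For the reverse direction, I would invoke the fact, noted just before Lemma~\ref{iapdiagonal}, that any superpattern of a pattern allowing a given eigenvalue property also allows that property. The first three patterns are refined inertially arbitrary by Theorem~\ref{RIAP3}, hence inertially arbitrary, so all of their superpatterns are inertially arbitrary. The fourth pattern is $\A_3$, which is inertially arbitrary by Theorem~\ref{IAPN} with $n=3$, so its superpatterns are inertially arbitrary as well. Combining the two, any pattern equivalent to a superpattern of one of the four listed patterns is inertially arbitrary, completing the equivalence.

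The argument carries no genuine computational difficulty, since all the required eigenvalue realizations were already carried out in the cited theorems. The only point demanding care is confirming that the case split in the forward direction is truly exhaustive: this rests entirely on the dichotomy ``refined inertially arbitrary or not'' together with Theorem~\ref{minIAP}, which is the unique place where a pattern that is inertially but not refined inertially arbitrary is pinned down to a single equivalence class. Provided that theorem is correctly applied, the corollary follows by assembly rather than by any new technique.
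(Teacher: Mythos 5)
Your proposal is correct and follows essentially the same route as the paper, whose proof simply cites Theorem~\ref{RIAP3} and Theorem~\ref{minIAP}; your case split on whether $\A$ is refined inertially arbitrary is exactly the intended reading. You also make explicit the small point the paper leaves implicit, namely that the reverse direction for the fourth pattern needs $\A_3$ to be inertially arbitrary, which indeed comes from Theorem~\ref{IAPN} with $n=3$.
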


\begin{proof} 
The result follows from Theorem~\ref{RIAP3} and Theorem~\ref{minIAP}.
\end{proof}

\section{Order 4 refined inertially arbitrary patterns}\label{order4}
\tikzstyle{place}=[circle,draw=black!100,fill=black!100,thick,inner sep=0pt,minimum size=1mm]
\tikzstyle{left}=[>=latex,<-,semithick]
\tikzstyle{right}=[>=latex,->,semithick]
\tikzstyle{nleft}=[>=latex,-,semithick]
\tikzstyle{nright}=[>=latex,-,semithick]
\tikzstyle{double}=[>=latex,<->,semithick]
\tikzstyle{right2}=[-,semithick]
In this section, we determine all refined inertially arbitrary order $4$ zero patterns with the least possible number of $\oast$ entries. In general, it is an open question as to the minimum number of $\oast$ entries
in an inertially or a refined inertially arbitrary pattern. It is known~\cite{CF} that an irreducible spectrally arbitrary pattern of order $n$ requires at least $2n-1$ nonzero entries.
In Lemma~\ref{not6},  we determine that an order $4$ refined inertially arbitrary pattern requires at least
seven $\oast$ entries. Then, in Theorem~\ref{seven}, we determine which of the irreducible patterns 
with seven $\oast$ entries are refined inertially arbitrary.

\begin{lemma}\label{not6}
If $\A$ is an irreducible refined inertially arbitrary 
pattern of order $4$, then $\A$ must have at least seven $\oast$ entries.
\end{lemma}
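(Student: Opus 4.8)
The plan is to combine the forced cyclic conditions with a short degree count to reach a lower bound of six entries, and then to eliminate six by a single coefficient identity.

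First I would record the structural necessities. Since $\A$ is refined inertially arbitrary of order $4$ it realizes $(0,0,0,4)$, so by Lemma~\ref{2cycle} the digraph $D(\A)$ has a proper $2$-cycle; by Lemma~\ref{kcycle} it has a composite $k$-cycle for each $k=1,2,3,4$, in particular at least one loop; and by irreducibility $D(\A)$ is strongly connected. A strongly connected digraph on $4$ vertices needs at least $4$ off-diagonal arcs, and with exactly $4$ every vertex has in- and out-degree $1$, forcing a single Hamilton $4$-cycle with no $2$-cycle. Hence the proper $2$-cycle forces at least $5$ off-diagonal arcs, and the loop gives at least $6$ entries in total. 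The same degree bound pins down the borderline case: any $6$-entry pattern satisfying these conditions must split as exactly $5$ off-diagonal arcs plus $1$ loop, since fewer than $5$ off-diagonal arcs cannot be strongly connected while carrying a $2$-cycle, and zero loops violates Lemma~\ref{kcycle} at $k=1$.

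Next I would rule out six entries by examining a realization of $(0,0,0,4)$, where the cycle sums of Lemma~\ref{cycleprod} satisfy $E_1=E_3=0$ and $E_4>0$. With a single loop, $E_1$ equals that loop's diagonal entry, so $E_1=0$ forces that entry to be zero; then every composite cycle through the loop vanishes, and $E_4$ reduces to the sum of the off-diagonal composite $4$-cycle products. Thus $E_4>0$ requires $D(\A)$ to contain an off-diagonal composite $4$-cycle, namely a Hamilton $4$-cycle or two vertex-disjoint $2$-cycles. Two disjoint $2$-cycles already use $4$ arcs in two separate components, so strong connectivity would need at least two more arcs, exceeding five; this case cannot occur. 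Therefore the five off-diagonal arcs consist of a Hamilton $4$-cycle together with one extra arc, and to create a proper $2$-cycle the extra arc must reverse one Hamilton arc. Up to equivalence this fixes the off-diagonal digraph, and the loop must sit on one of the two vertices not on the resulting $2$-cycle, as otherwise there is no composite $3$-cycle, again violating Lemma~\ref{kcycle}.

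Finally I would compute the cycle sums for this unique surviving candidate. By Lemma~\ref{cycleprod} the only composite $3$-cycle is the product of the $2$-cycle with the disjoint loop, which yields the identity $E_3=E_1E_2$ for every $A\in Q(\A)$. I would then test the refined inertia $(0,4,0,0)$ (Hurwitz stability): writing the characteristic polynomial as $x^4+c_3x^3+c_2x^2+c_1x+c_0$ with $c_3=-E_1$, $c_2=E_2$, $c_1=-E_3$, $c_0=E_4$, the Routh--Hurwitz quantity $c_3c_2c_1-c_1^2-c_3^2c_0$ collapses under $E_3=E_1E_2$ to $-E_1^2E_4$, which is strictly negative whenever $E_1\neq0$ and $E_4>0$. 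Hence the candidate cannot realize $(0,4,0,0)$, so no $6$-entry pattern is refined inertially arbitrary and at least seven $\oast$ entries are required. I expect the main obstacle to lie not in this final Routh--Hurwitz computation, which is routine, but in verifying that the case split is genuinely exhaustive up to equivalence: that realizing $(0,0,0,4)$ really does force an off-diagonal composite $4$-cycle, and that with only five off-diagonal arcs and a mandatory proper $2$-cycle the Hamilton-cycle-plus-reversed-arc configuration is the only one that survives.
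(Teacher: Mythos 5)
Your proof is correct, and although it starts from the same two necessary conditions (Lemmas~\ref{kcycle} and~\ref{2cycle}) and ultimately isolates the same critical six-entry digraph as the paper --- a Hamilton $4$-cycle with one reversed arc and a loop placed off the resulting $2$-cycle --- the organization is genuinely different. The paper splits on the largest proper cycle present (only $2$-cycles; a proper $4$-cycle; a proper $3$-cycle but no $4$-cycle) and dispatches these with, respectively, an edge count on the underlying tree, the refined inertia $(0,2,0,2)$ against two explicit factorizations of the characteristic polynomial, and the refined inertia $(0,0,0,4)$. You instead obtain the bound of six by a degree count ($5$ off-diagonal arcs forced by strong connectivity plus a $2$-cycle, and one loop), and then use a single realization of $(0,0,0,4)$ to force an off-diagonal composite $4$-cycle: $E_1=0$ zeroes the unique loop entry and hence every composite cycle through it, while $E_4=\ell_1^2\ell_2^2>0$ must survive. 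That one observation simultaneously subsumes the paper's tree case and its $3$-cycle-only case and collapses the analysis to a single candidate up to equivalence. Your final exclusion also differs: both arguments rest on the identity $E_3=E_1E_2$ (the paper's ``coefficient of $x$ is the product of the coefficients of $x^3$ and $x^2$''), but where the paper tests $(0,2,0,2)$, you test $(0,4,0,0)$ and note that the Hurwitz determinant $\Delta_3=c_3c_2c_1-c_1^2-c_3^2c_0$ collapses to $-E_1^2E_4$, which is negative since stability forces $c_3=-E_1>0$ and $c_0=E_4>0$. I checked the pivotal steps --- two vertex-disjoint $2$-cycles would require six off-diagonal arcs, a Hamilton cycle plus a reversing arc is the only five-arc configuration carrying a proper $2$-cycle, and the loop must avoid that $2$-cycle to supply a composite $3$-cycle --- and all hold. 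Your route is somewhat tighter (one structural inertia, one stability inertia, one determinant), at the cost of invoking the Routh--Hurwitz criterion where the paper stays with elementary factorizations.
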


\begin{proof}
Suppose $\A$ is an irreducible refined inertially arbitrary pattern of order $4$. 
By Lemma~\ref{kcycle} and Lemma~\ref{2cycle}, $D=D(\A)$ must have a 
loop and a proper $2$-cycle. If $D$ had only $2$-cycles and no
proper cycles of higher order, then the underlying graph of $D$ is a tree (having $3$ edges). Thus, since $\A$ is irreducible, 
 $D$ would have at least $3\times 2 +1=7$ edges, including at least one loop. Therefore, assume that $D$ has a proper $k$-cycle for some $k >2$. 

First suppose that $D$ has a proper $4$-cycle. Since $D$ also has a proper $2$-cycle and a loop, 
$\A$ has at least six nonzero entries. Suppose $\A$ has only six nonzero entries. 
If the loop is incident to a proper $2$-cycle, then $D$ does not have a composite $3$-cycle
and so would fail to be inertially arbitrary by Lemma~\ref{kcycle}. Thus the loop is not
incident to the $2$-cycle in $D$. The resulting pattern does not allow refined
inertia $(0,2,0,2)$. In particular, if $A\in Q(\A)$ has inertia $(0,2,0,2)$, then
the characteristic polynomial of $A$ is of the either of the form 
$$(x^2+2\alpha x+\alpha^2+\beta^2)(x^2+\omega^2)
= x^4+2\alpha x^3+(\alpha^2+\beta^2+\omega^2)x^2+2\alpha \omega^2 x+ (\alpha^2+\beta^2)\omega^2$$
for some positive real numbers $\alpha$ and $\omega$, and non-negative real number $\beta$, or of the form
$$(x+\gamma)(x+\kappa)(x^2+\omega^2)=x^4+(\gamma+\kappa)x^3+(\gamma\kappa+\omega^2)x^2+(\gamma+\kappa)\omega^2x
+\gamma\kappa\omega^2,
$$
for some positive real numbers $\gamma, \kappa$ and $\omega$. 
However, the only composite $3$-cycle in $D$ is obtained by combining the loop with
the $2$-cycle, and hence by Lemma~\ref{cycleprod}, the coefficient of $x$ must by
the product of the coefficients of $x^3$ and $x^2$. This would imply that
$2\alpha(\alpha^2+\beta^2)=0$ in the first case, and 
$(\gamma+\kappa)(\gamma\kappa)=0$ in the latter case. Both of these would
be contradictions since $\alpha, \gamma$ and $\kappa$ are positive.

Suppose that $D$ does not have a proper $4$-cycle, but instead has a proper $3$-cycle. Assuming $\A$ has
less than seven $\oast$ entries, then $D$ can not have two proper $3$-cycles as well as a loop and
a proper $2$-cycle. Thus $D$ must have exactly one proper $3$-cycle and since $\A$ is irreducible, $D$ must have
the digraph in Figure~\ref{D}, with the loop placed so that $D$ has a composite
$4$-cycle (as required by Lemma~\ref{kcycle}). However, in this case, if the coefficient of $x^3$
is zero in the characteristic polynomial of $A\in Q(\A)$, then $\det(A)=0$. This
would imply that $\A$ does not allow refined inertia $(0,0,0,4)$.  
\begin{figure}[ht]
${\tikzpicture \phantom{\node (5) at (-0.25,0.28)[place]{};}
\node (1) at (-0.5,0.5)[place] {};
\node (2) at (0.5,0.5)[place] {};
\node (3) at (0.5,-0.5)[place] {};
\node (4) at (-0.5,-0.5)[place] {};
\draw [nright] (3) to [bend right=10] (4);
\draw [nright] (4) to [bend right=10] (3);
\draw [right] (1) to (2);
\draw [right] (2) to (3);
\draw [right] (3) to (1);
\draw [-] (-0.5,-0.55) arc (360:0:3pt);
\endtikzpicture}$\caption{}\label{D}
\end{figure}

 Therefore, an irreducible refined inertially arbitrary pattern must have at least
 seven $\oast$ entries.
\end{proof}

\begin{figure}[hb]
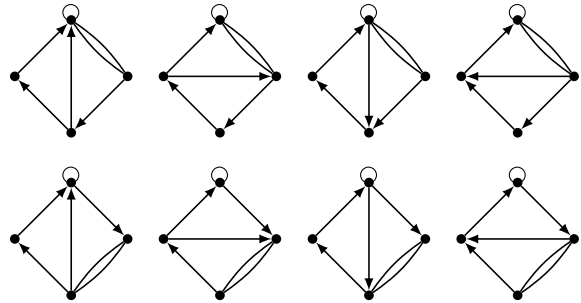

{\tikzpicture[baseline=-5] 
						\node (1) at (0, 0.75)[place] {};
						\node (2) at (0.75, 0)[place] {};
						\node (3) at (0,-0.75)[place] {};
						\node (4) at (-0.75,0)[place] {};
						\draw [nright] (2) to [bend right=10] (1);
						\draw [nright] (1) to [bend right=10] (2);
						\draw [right] (3) to (1);
						\draw [right] (4) to (1);
						\draw [right] (2) to (3);
						\draw [right] (3) to (4);
						\draw [-] (0.1, 0.85) arc (360:0:3pt);
						\endtikzpicture \quad
\tikzpicture[baseline=-5] 
						\node (1) at (0, 0.75)[place] {};
						\node (2) at (0.75, 0)[place] {};
						\node (3) at (0,-0.75)[place] {};
						\node (4) at (-0.75,0)[place] {};
						\draw [nright] (2) to [bend right=10] (1);
						\draw [nright] (1) to [bend right=10] (2);
						\draw [right] (4) to (2);
						\draw [right] (4) to (1);
						\draw [right] (2) to (3);
						\draw [right] (3) to (4);
						\draw [-] (0.1, 0.85) arc (360:0:3pt);
						\endtikzpicture \quad 
	\tikzpicture[baseline=-5] 
						\node (1) at (0, 0.75)[place] {};
						\node (2) at (0.75, 0)[place] {};
						\node (3) at (0,-0.75)[place] {};
						\node (4) at (-0.75,0)[place] {};
						\draw [nright] (2) to [bend right=10] (1);
						\draw [nright] (1) to [bend right=10] (2);
						\draw [right] (1) to (3);
						\draw [right] (4) to (1);
						\draw [right] (2) to (3);
						\draw [right] (3) to (4);
						\draw [-] (0.1, 0.85) arc (360:0:3pt);
						\endtikzpicture\quad		
												\tikzpicture[baseline=-5]
						\node (1) at (0, 0.75)[place] {};
						\node (2) at (0.75, 0)[place] {};
						\node (3) at (0,-0.75)[place] {};
						\node (4) at (-0.75,0)[place] {};
						\draw [nright] (2) to [bend right=10] (1);
						\draw [nright] (1) to [bend right=10] (2);
						\draw [right] (2) to (4);
						\draw [right] (4) to (1);
						\draw [right] (2) to (3);
						\draw [right] (3) to (4);
						\draw [-] (0.1, 0.85) arc (360:0:3pt);
						\endtikzpicture}\vspace{1em}
						
	{\tikzpicture[baseline=-5]
						\node (1) at (0, 0.75)[place] {};
						\node (2) at (0.75, 0)[place] {};
						\node (3) at (0,-0.75)[place] {};
						\node (4) at (-0.75,0)[place] {};
						\draw [nright] (2) to [bend right=10] (3);
						\draw [nright] (3) to [bend right=10] (2);
						\draw [right] (1) to (2);
						\draw [right] (3) to (1);
						\draw [right] (3) to (4);
						\draw [right] (4) to (1);
						\draw [-] (0.1, 0.85) arc (360:0:3pt);
						\endtikzpicture\quad
						\tikzpicture[baseline=-5]
						\node (1) at (0, 0.75)[place] {};
						\node (2) at (0.75, 0)[place] {};
						\node (3) at (0,-0.75)[place] {};
						\node (4) at (-0.75,0)[place] {};
						\draw [nright] (2) to [bend right=10] (3);
						\draw [nright] (3) to [bend right=10] (2);
						\draw [right] (1) to (2);
						\draw [right] (3) to (4);
						\draw [right] (4) to (1);
						\draw [right] (4) to (2);
						\draw [-] (0.1, 0.85) arc (360:0:3pt);
						\endtikzpicture\quad
						\tikzpicture[baseline=-5]
						\node (1) at (0, 0.75)[place] {};
						\node (2) at (0.75, 0)[place] {};
						\node (3) at (0,-0.75)[place] {};
						\node (4) at (-0.75,0)[place] {};
						\draw [nright] (2) to [bend right=10] (3);
						\draw [nright] (3) to [bend right=10] (2);
						\draw [right] (1) to (2);
						\draw [right] (3) to (4);
						\draw [right] (4) to (1);
						\draw [right] (1) to (3);
						\draw [-] (0.1, 0.85) arc (360:0:3pt);
						\endtikzpicture\quad
						\tikzpicture[baseline=-5]
						\node (1) at (0, 0.75)[place] {};
						\node (2) at (0.75, 0)[place] {};
						\node (3) at (0,-0.75)[place] {};
						\node (4) at (-0.75,0)[place] {};
						\draw [nright] (2) to [bend right=10] (3);
						\draw [nright] (3) to [bend right=10] (2);
						\draw [right] (1) to (2);
						\draw [right] (3) to (4);
						\draw [right] (4) to (1);
						\draw [right] (2) to (4);
						\draw [-] (0.1, 0.85) arc (360:0:3pt);
						\endtikzpicture}				
\caption{Digraphs of spectrally arbitrary patterns in $\ccC_4$}\label{C4}							
\end{figure}

\bigskip

Let $\ccC_n$ be the set of zero patterns with a digraph $D$ satisfying:~\begin{enumerate}
\item $D$ has exactly one $k$-cycle for each $k$, $1\leq k\leq n$,
\item $D$ has a directed Hamilton path, and
\item each proper cycle of $D$ contains exactly one arc not on the Hamilton path.
\end{enumerate}
Figure~\ref{C4} lists the digraphs corresponding to
the patterns in $\ccC_4$. Each of these patterns 
are spectrally arbitrary: 
\begin{lemma}\cite[Theorem 2.2]{ESV}\label{C4L}
If $\A$ is a pattern in $\ccC_n$, then
$\A$ is spectrally arbitrary. 
\end{lemma}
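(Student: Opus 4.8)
The plan is to work directly with the cycle-product expansion of the characteristic polynomial from Lemma~\ref{cycleprod} and to exploit the rigid combinatorial structure that conditions (1)--(3) force on $D=D(\A)$, showing that the map from arc weights to polynomial coefficients is triangular and hence surjective onto $\reals^n$. First I would fix a labelling of the vertices $v_1,\ldots,v_n$ along the directed Hamilton path guaranteed by condition (2), so that the path arcs are exactly $(v_i,v_{i+1})$ for $1\le i\le n-1$. The key structural observation is a description of the proper cycles: by condition (3) each proper cycle uses exactly one non-path arc, and deleting that arc leaves a directed walk on path arcs, which can only be an increasing run $v_i\to v_{i+1}\to\cdots\to v_j$. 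Hence every proper cycle consists of a contiguous segment $\{v_i,\ldots,v_j\}$ closed by a single back arc $(v_j,v_i)$ (a loop when $i=j$). By condition (1) there is exactly one proper cycle of each length $k$; I will write $b_k$ for the weight on its back arc and assign the fixed value $1$ to every path arc. Counting then gives $n-1$ path entries and $n$ distinct back-arc entries, the expected total of $2n-1$ nonzero positions.

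Next I would compute $E_k$, the sum of the composite $k$-cycle products. With unit path weights, the product of the unique proper $k$-cycle is $(-1)^{k-1}b_k$, while every other composite $k$-cycle is a vertex-disjoint union of proper cycles each of length strictly less than $k$, so its product is a monomial in $b_1,\ldots,b_{k-1}$ alone. Consequently
\begin{equation*}
E_k = (-1)^{k-1}b_k + g_k(b_1,\ldots,b_{k-1}),
\end{equation*}
for a fixed polynomial $g_k$, and $E_k$ involves none of $b_{k+1},\ldots,b_n$ because no constituent proper cycle can have length exceeding $k$. Thus $(b_1,\ldots,b_n)\mapsto(E_1,\ldots,E_n)$ is lower triangular with diagonal entries $\pm1$.

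To finish, given any monic real polynomial of degree $n$, I would read off the required target values $E_1,\ldots,E_n$ via Lemma~\ref{cycleprod} and solve the triangular system recursively: $b_1$ is determined by $E_1$, and then $b_k=(-1)^{k-1}\bigl(E_k-g_k(b_1,\ldots,b_{k-1})\bigr)$ for $k=2,\ldots,n$. Each step is solvable since the coefficient of $b_k$ is $\pm1$, and the resulting real weights together with the unit path weights define a matrix $A\in Q(\A)$ with the prescribed characteristic polynomial. I would stress that this argument is clean precisely because $\A$ is a \emph{zero} pattern: some solved values $b_k$ may vanish, which is harmless since $Q(\A)$ permits zeros in the $\oast$ positions (for a nonzero or sign pattern one would additionally have to force the weights to be nonzero, or invoke the nilpotent--centralizer method of Theorem~\ref{NC}).

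The main obstacle I anticipate is not the final linear algebra but the structural step: carefully deducing from conditions (1) and (3) that every proper cycle is a single increasing path-segment closed by one back arc, that the $n$ back arcs (including the loop) are pairwise distinct and distinct from the path arcs, and---most importantly for triangularity---that the only contribution of $b_k$ to $E_k$ comes from the proper $k$-cycle. This last uniqueness is exactly what condition (1) supplies, and it is the crux that makes the coefficient map triangular rather than merely polynomial.
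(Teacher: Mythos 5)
The paper does not actually prove this lemma: it is imported verbatim as \cite[Theorem 2.2]{ESV}, so there is no internal argument to compare against. Your proof is correct and self-contained, and it is the natural generalization of the verification the paper itself sketches for $\C_n$ in Example~\ref{Cn} (unit weights on the path arcs, then read off coefficients via Lemma~\ref{cycleprod}); it is also in the spirit of the argument in \cite{ESV}. The two load-bearing steps both hold: (i) since a proper cycle has exactly one non-path arc, deleting it must leave an increasing run $v_i\to\cdots\to v_j$ along the Hamilton path, so the cycle is a contiguous segment closed by a back arc $(v_j,v_i)$, and distinct proper cycles have distinct lengths $j-i+1$ and hence distinct back arcs; (ii) $b_k$ can only enter a composite $k$-cycle product through the proper $k$-cycle itself, every other composite $k$-cycle being a disjoint union of shorter proper cycles, which gives the unitriangular (up to sign) map $(b_1,\ldots,b_n)\mapsto(E_1,\ldots,E_n)$ and the recursive solution. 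Your closing remark about zero patterns is exactly the right one --- solved values $b_k=0$ are admissible in $Q(\A)$, which is why no nilpotent--centralizer machinery is needed here. One small point worth a sentence in a polished write-up: conditions (1)--(3) do not by themselves say that $D$ has \emph{only} the $2n-1$ arcs you count; any further arc lies on no proper cycle (a forward ``jump'' arc cannot be the unique non-path arc of a cycle), so it contributes to no $E_k$ and may be weighted $0$, again because $\A$ is a zero pattern. With that observation added, the argument is complete.
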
 

\begin{remark}\label{YSAP}
{\rm The first digraph in Figure~\ref{Y} corresponds to the 
spectrally arbitrary pattern $\Y_4(3,1)$ from \cite{ESV}.
The second digraph in Figure~\ref{Y} has
the same cycle adjacency structure as $\Y_4(3,1)$ and
hence by Lemma~\ref{cycleprod}, it is also spectrally
arbitrary.} 
\end{remark}
\begin{figure}[h]
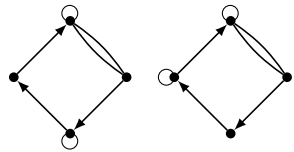

\tikzpicture[baseline=-5]
				\node (1) at (0, 0.75)[place] {};
				\node (2) at (0.75, 0)[place] {};
				\node (3) at (0,-0.75)[place] {};
				\node (4) at (-0.75,0)[place] {};
				\draw [nright] (1) to [bend right=10] (2);
				\draw [nright] (2) to [bend right=10] (1);
				\draw [right] (2) to (3);
				\draw [right] (3) to (4);
				\draw [right] (4) to (1);
				\draw [-] (0.1, 0.85) arc (360:0:3pt);
				\draw [-] (0.1, -0.85) arc (360:0:3pt);
				\endtikzpicture\quad
\tikzpicture[baseline=-5]
				\node (1) at (0, 0.75)[place] {};
				\node (2) at (0.75, 0)[place] {};
				\node (3) at (0,-0.75)[place] {};
				\node (4) at (-0.75,0)[place] {};
				\draw [nright] (1) to [bend right=10] (2);
				\draw [nright] (2) to [bend right=10] (1);
				\draw [right] (2) to (3);
				\draw [right] (3) to (4);
				\draw [right] (4) to (1);
				\draw [-] (0.1, 0.85) arc (360:0:3pt);
				\draw [-] (-0.75, 0) arc (360:0:3pt);
				\endtikzpicture
				\caption{Digraphs of $\Y_4(3,1)$ and a related spectrally arbitrary pattern.}\label{Y}
\end{figure}

\bigskip
The next theorem references the tables in the Appendix.
\begin{theorem}\label{seven}
If $\A$ is an irreducible order $4$ pattern with $7$ nonzero entries, then
\begin{enumerate}
\item $\A$ is spectrally arbitrary if and only if $\A$ is equivalent a pattern with a digraph in Figure~$\ref{C4}$ 
or Figure~$\ref{Y}$. 
\item\label{part2} $\A$ is refined inertially arbitrary, but not spectrally arbitrary, 
if and only if $\A$ is equivalent to $\cP_{4,1}$, $\cP_{4,2}$, or a pattern in Table~$\ref{RIAPT}$.
\item\label{part3} $\A$ is inertially arbitrary, but not refined inertially arbitrary,
if and only if $\A$ is equivalent to $\A_4$, a pattern in 
Table~$\ref{IAPT}$ or a pattern in Table~\ref{IAP4LastCase}. 
\end{enumerate}
\end{theorem}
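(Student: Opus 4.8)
The plan is to prove all three parts simultaneously by an exhaustive classification of the irreducible digraphs on four vertices with exactly seven arcs (counting loops), carried out up to the equivalence generated by permutation similarity and transposition. First I would catalogue these digraphs, immediately discarding any that are not strongly connected (by irreducibility) and any failing the necessary conditions already established: by Lemma~\ref{kcycle} a candidate that is inertially arbitrary must possess a composite $k$-cycle for every $k$ with $1\le k\le 4$, and by Lemma~\ref{2cycle} a refined inertially arbitrary candidate must in addition contain a proper $2$-cycle. These two filters eliminate the bulk of the seven-arc digraphs and leave a manageable finite list, which I would organize according to the length of the longest proper cycle and the number of loops, mirroring the case split already used in Lemma~\ref{not6}.

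For part (1) the positive direction is immediate: the patterns whose digraphs appear in Figure~\ref{C4} lie in $\ccC_4$ and are spectrally arbitrary by Lemma~\ref{C4L}, while those in Figure~\ref{Y} are spectrally arbitrary by Remark~\ref{YSAP}. The work is the converse, showing no other seven-arc pattern is spectrally arbitrary. For each remaining candidate I would exhibit a coefficient obstruction via Lemma~\ref{cycleprod}: since the pattern has so few cycles, the coefficients $E_1,\dots,E_4$ of the characteristic polynomial satisfy an algebraic dependency (for instance a product relation forcing one $E_k$ to vanish whenever another does, exactly as in the argument of Lemma~\ref{not6} and in the remark following the definition of $\cP_{n,\alpha}$), and I would name an explicit monic quartic that cannot be realized.

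For parts (2) and (3) I would first dispose of $\cP_{4,1}$ and $\cP_{4,2}$ by Theorem~\ref{patheorem} (both refined inertially arbitrary, neither spectrally arbitrary since they fail the coefficient condition noted after the definition of $\cP_{n,\alpha}$), and of $\A_4$ by Theorem~\ref{IAPN}. For the remaining tabulated patterns the strategy is the block decomposition used in Theorem~\ref{pathRIAP}: writing a pattern as a $2$-cycle block $\T$ bordered by a smaller pattern, I would reduce the realization of a refined inertia $(a,b,c,2d)+(x,y,z,w)$ with $(x,y,z,w)\in\{(1,1,0,0),(0,0,2,0),(0,0,0,2)\}$ to the order-two induction, which covers every refined inertia except possibly $(4,0,0,0)$ and $(3,0,1,0)$ up to reversal. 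These two exceptional refined inertias I would obtain from a properly signed nest via Lemma~\ref{pathLemma0} (applied, as in the path proofs, both to the full pattern and to the reducible subpattern with one border entry deleted), verifying the nest directly as in Lemma~\ref{pathLemma3}. To separate part (2) from part (3), for each inertially-but-not-refined pattern I would exhibit the missing refined inertia---typically $(0,0,0,4)$ or $(0,0,2,2)$, unattainable because the available composite cycles force a determinant or trace relation incompatible with a purely imaginary spectrum---while still realizing every ordinary inertia by the zero-determinant-plus-shift argument of Theorem~\ref{IAPN} (or by Lemma~\ref{iapdiagonal} when two loops are present).

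The hardest step will be the converse direction of all three parts: confirming that the coefficient obstructions are genuinely exhaustive, that is, that every seven-arc digraph surviving the cycle-count filters falls into exactly one of the three classes with no overlaps or omissions. This is where the case analysis is heaviest, and where I would lean most on the tables in the Appendix to record, for each equivalence class, both a realizing family of matrices for the refined inertias it does attain and the explicit polynomial or inertia it fails to attain.
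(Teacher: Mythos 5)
Your overall skeleton does match the paper's: an exhaustive enumeration of the strongly connected seven-arc digraphs organized by the number of loops and the cycle structure, the filters from Lemma~\ref{kcycle} and Lemma~\ref{2cycle}, Lemma~\ref{C4L} and Remark~\ref{YSAP} for the spectrally arbitrary patterns, coefficient obstructions via Lemma~\ref{cycleprod} for the negative directions, and the Appendix tables as the repository of realizations. The paper's proof is exactly this case analysis, with the positive direction for the tabulated patterns discharged entirely by the explicit matrices listed in Tables~\ref{RIAPT}, \ref{IAPT} and \ref{IAP4LastCase}.

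The genuine gap is in your proposed mechanism for showing that the patterns of Table~\ref{RIAPT} realize all refined inertias. You claim that writing each such pattern as a $2\times 2$ block $\T$ bordered by a smaller pattern reduces the problem to the order-two induction of Theorem~\ref{pathRIAP}, covering everything except $(4,0,0,0)$ and $(3,0,1,0)$. That reduction works for $\cP_{4,1}$ and $\cP_{4,2}$ precisely because the complementary $2\times 2$ diagonal block is $\C_2$, which is spectrally arbitrary. For several patterns in Table~\ref{RIAPT} no such decomposition exists: for $\R_7$ the block complementary to the proper $2$-cycle is upper triangular with two loops and cannot realize $(0,0,0,2)$, so the sum of block refined inertias never produces $(0,0,0,4)$; for $\R_1$ and $\R_6$ the complementary block is nilpotent or reducible of the form $\left[ \begin{smallmatrix} \oast & \oast \\ 0 & 0 \end{smallmatrix} \right]$, which forces a zero eigenvalue and so misses every refined inertia with $n_z=0$, including $(2,0,0,2)$, $(1,1,0,2)$ and $(0,0,0,4)$. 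These are exactly the refined inertias for which the paper must (and does) supply irreducible realizations by hand in Table~\ref{RIAPT}; they cannot be obtained from any reducible subpattern. Your properly-signed-nest step only recovers $(4,0,0,0)$ and $(3,0,1,0)$ and their reversals, so it does not repair this. A secondary, smaller issue is that the separation of parts (2) and (3) and the non-spectrally-arbitrary verdicts require pattern-specific coefficient identities (e.g.\ for $\R_4$, that trace zero, $E_2=0$ and $\det\neq 0$ force $\det<0$); you correctly identify that this is where the work lies, but the proposal as written does not yet contain it.
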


\begin{proof}  The tables in the Appendix, along with Lemma~\ref{C4L}, Remark~\ref{YSAP}, and Theorem~\ref{patheorem}, 
provide justification for part of the characterization. That the patterns in (\ref{part2}) are not spectrally arbitrary
and the patterns in (\ref{part3}) are not refined inertially arbitrary will be noted in the casework 
as we justify the converse of the characterization. 

Let $\A$ be an irreducible inertially arbitrary pattern of order $4$ with $7$ $\oast$ entries.
Then the digraph of $A$ has at least one loop and at least one $2$-cycle by Lemma~\ref{kcycle}. 
 
	\textbf{Case 1:} Suppose $D(\A)$ has exactly one loop. Then $D(\A)$ has a proper $2$-cycle.
	\begin{enumerate}[(A)]
		\item Suppose $D(\A)$ has exactly one proper $2$-cycle. 
		\begin{enumerate}[(I)]
			\item Suppose the loop is incident to the proper $2$-cycle. By Lemma~\ref{cycleprod}, $D(\A)$ must
			have a composite $4$-cycle in order to realize inertia $(4,0,0)$.
			\begin{enumerate}[(a)]
				\item Suppose $D(\A)$ has a proper $4$-cycle. 
				\begin{enumerate}[(i)]
					\item Suppose the proper $2$-cycle shares an arc with a proper $4$-cycle. Then $\A$ is in $\ccC_4$ and
				$\A$ is spectrally arbitrary by Lemma~\ref{C4L}. These are the first four patterns from Figure~\ref{C4}. 
					\item Suppose the proper $2$-cycle does not share an arc with a proper $4$-cycle. Then 
					$\A$ is equivalent to $\B_1$ in Table~\ref{IAPT}.  
					$\B_1$ is not refined inertially arbitrary since it does not allow the
					 refined inertia $(0,0,0,4)$ since there exists a superpattern (see \cite[Example 2.16]{CF}) 
					 of this pattern that does not  allow refined inertia $(0,0,0,4)$. 
				\end{enumerate}
				\item Suppose $D(\A)$ has no proper $4$-cycle. Then $\A$ is equivalent to $\B_2$
				in Table~\ref{IAPT}.   
				$\B_2$ can not realize the refined inertia $(0,0,0,4)$ 
				since for any $A\in Q(\B_2)$, if the trace of $A$ is zero, then $\det(A)=0$.		 
			\end{enumerate}
			\item Suppose the loop of $D(\A)$ is not incident to the $2$-cycle. 
			\begin{enumerate}[(a)]
				\item Suppose $D(\A)$ has  a proper $4$-cycle. 
				\begin{enumerate}[(i)]
					\item Suppose the proper $2$-cycle shares an arc with a $4$-cycle. Then $\A$ is in $\ccC_4$ and
				$\A$ is spectrally arbitrary by Lemma~\ref{C4L}. These correspond to the second four digraphs in Figure~\ref{C4}. 
					
					\item  Suppose the proper $2$-cycle does not share an arc with a $4$-cycle.
					Then $\A$ is equivalent to $\B_3$ in Table~\ref{IAPT}.  
					$\B_3$ 	does not allow refined inertia $(0,0,0,4)$ and so is not refined
					inertially arbitrary.  In particular, one can check that if
					$A\in Q(\B_3)$ has characteristic polynomial 
					$p(x)=x^4+E_2x^2+E_4$, with $E_2$ and $E_4$ nonzero,
					then $E_2$ and $E_4$ must have opposite signs.
				\end{enumerate}
				\item Suppose $D(\A)$ has no proper $4$-cycle. Then $\A$
				is equivalent to either  $\B_4$ or $\B_5$ in Table~\ref{IAPT}. 
				There is no matrix $A\in Q(\B_4)\cup Q(\B_5)$ with refined inertia $(0,0,0,4)$ 
				since if the trace of $A$ is zero, then $\det(A)=0$. Thus, these
				two patterns are not refined inertially arbitrary.
				\end{enumerate}
		\end{enumerate}
		\item Suppose $D(\A)$ has exactly two proper $2$-cycles. 
		\begin{enumerate}[(I)]
			\item Suppose both proper $2$-cycles are incident to the loop. Considering $\A$ is 
			irreducible and has seven nonzero entries, $\A$ would need
			to be equivalent to one of the first two patterns in Figure~\ref{nIAPD}. But the digraph of 
			first pattern  has no composite $3$-cycle and the digraph of the second pattern has no composite $4$-cycle. 
			Thus, by Lemma~\ref{kcycle}, neither of these patterns are inertially arbitrary.  
					\begin{figure}
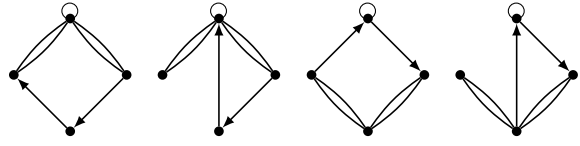

					{\tikzpicture[baseline=-5]
					\node (1) at (0, 0.75)[place] {};
					\node (2) at (0.75, 0)[place] {};
					\node (3) at (0,-0.75)[place] {};
					\node (4) at (-0.75,0)[place] {};
					\draw [nright] (1) to [bend right=10] (2);
					\draw [nright] (2) to [bend right=10] (1);
					\draw [nright] (1) to [bend right=10] (4);
					\draw [nright] (4) to [bend right=10] (1);
					\draw [right] (2) to (3);
					\draw [right] (3) to (4);
					\draw [-] (0.1, 0.85) arc (360:0:3pt);
					\endtikzpicture\quad
										\tikzpicture[baseline=-5]
					\node (1) at (0, 0.75)[place] {};
					\node (2) at (0.75, 0)[place] {};
					\node (3) at (0,-0.75)[place] {};
					\node (4) at (-0.75,0)[place] {};
					\draw [nright] (1) to [bend right=10] (2);
					\draw [nright] (2) to [bend right=10] (1);
					\draw [nright] (1) to [bend right=10] (4);
					\draw [nright] (4) to [bend right=10] (1);
					\draw [right] (2) to (3);
					\draw [right] (3) to (1);
					\draw [-] (0.1, 0.85) arc (360:0:3pt);
					\endtikzpicture\quad
\tikzpicture[baseline=-5]
					\node (1) at (0, 0.75)[place] {};
					\node (2) at (0.75, 0)[place] {};
					\node (3) at (0,-0.75)[place] {};
					\node (4) at (-0.75,0)[place] {};
					\draw [nright] (3) to [bend right=10] (2);
					\draw [nright] (2) to [bend right=10] (3);
					\draw [nright] (3) to [bend right=10] (4);
					\draw [nright] (4) to [bend right=10] (3);
					\draw [right] (1) to (2);
					\draw [right] (4) to (1);
					\draw [-] (0.1, 0.85) arc (360:0:3pt);
					\endtikzpicture\quad					
\tikzpicture[baseline=-5]
					\node (1) at (0, 0.75)[place] {};
					\node (2) at (0.75, 0)[place] {};
					\node (3) at (0,-0.75)[place] {};
					\node (4) at (-0.75,0)[place] {};
					\draw [nright] (3) to [bend right=10] (2);
					\draw [nright] (2) to [bend right=10] (3);
					\draw [nright] (3) to [bend right=10] (4);
					\draw [nright] (4) to [bend right=10] (3);
					\draw [right] (1) to (2);
					\draw [right] (3) to (1);
					\draw [-] (0.1, 0.85) arc (360:0:3pt);
					\endtikzpicture				
					}
\caption{Digraphs of patterns that are not inertially arbitrary}\label{nIAPD}					
\end{figure}
			\item Suppose $D(\A)$ has one proper $2$-cycle incident to the loop and the proper $2$-cycles are incident to each other. 
			Since $D(\A)$ must have a composite $4$-cycle, $\A$ must be equivalent to 
			$\R_1$ in Table~\ref{RIAPT} or $\B_6$ Table~\ref{IAPT}. 
					$\R_1$ is not spectrally arbitrary since if $A\in Q(\R_1)$ 
					 with trace zero, then the sum of the composite $3$-cycles is zero. 
					 $\B_6$ is not refined inertially arbitrary, since if $A\in Q(\B_6)$ 
					 with trace zero, then $\det(A)=0$.

			\item Suppose one proper $2$-cycle of $D(\A)$ is incident to the loop but the proper $2$-cycles 
			are not incident to each other. Then $\A$ is equivalent to one of the
			four patterns $\R_2, \R_3, \R_4, \R_5$ in Table~\ref{RIAPT}. The patterns
			$\R_2$ and $\R_5$ are not spectrally arbitrary since if $A\in Q(\R_2)\cup Q(\R_5)$, 
			with the sum of the composite $2$-cycles zero, then $\det(A)\neq 0$.
			The pattern $\R_3$ is not spectrally arbitrary since if $A\in Q(\R_3)$
			has trace zero, then the sum of the composite $3$-cycles is zero. The 
			pattern $\R_4$ is not spectrally arbitrary since if $A\in Q(\R_4)$
			with trace zero, $\det(A)\neq 0$, and the sum of the composite
			$2$-cycles is zero, then $\det(A)<0$.
			
			\item Suppose both proper $2$-cycles are not incident to the loop. Then, since
			$\A$ is irreducible, $D(\A)$ must be the third or fourth digraph 
			in Figure~\ref{nIAPD}. The fourth digraph has no composite
			 $4$-cycle, so its corresponding pattern is not inertially arbitrary
			 by Lemma~\ref{kcycle}. One can check that
			 the third digraph corresponds to a pattern that  does not allow inertia $(2, 0, 2)$.

		\end{enumerate}
		\item Suppose $D(\A)$ has three proper $2$-cycles. In this case, the underlying graph of 
		$D(\A)$ must be a tree. If the tree is a star, then the digraph will
		have no composite $4$-cycle, thus the tree must be a path. In this case,
		$\A$ is equivalent to either the refined inertially arbitrary 
		pattern $\cP_{4,1}$ or $\cP_{4,2}$.
		
	\end{enumerate}
	\textbf{Case 2:} Suppose $D(\A)$ has exactly two loops. Note that there can be at most one proper $2$-cycle in this case, 
	 since there are only seven arcs and the digraph must be strongly connected. 
	\begin{enumerate}[(A)]
		\item Suppose $D(\A)$ has a proper $2$-cycle, and it is incident to both loops.
		Since $D(\A)$ must have a composite $3$-cycle, $\A$ will
		be equivalent to $\B_7$ in Table~\ref{IAPT}. In this
		case, if $A\in Q(\B_7)$ and the sum of the composite $3$-cycles of $A$ is zero, 
		then $\det(A)=0$. Thus $\A$ does not allow refined inertia $(0,0,0,4)$.
				
		\item Suppose $D(\A)$ has a proper $2$-cycle, and it is incident to exactly one loop. Then $\A$ is equivalent to either
		one of the spectrally arbitrary patterns in Figure~\ref{Y} or the 
		pattern $\R_6$ in Table~\ref{RIAPT}. If  $A\in Q(\R_6)$, and if the characteristic polynomial
		of $A$ is $x^4+c$, then $c\geq 0$. Thus $\R_6$ is not spectrally arbitrary.  
		
		\item Suppose $D(\A)$ has a proper $2$-cycle, and it is not incident to either loop. Then $\A$ is
		equivalent to either $\R_7$ in Table~\ref{RIAPT} or $\B_8$
		in Table~\ref{IAPT}. If $A\in Q(\R_7)$ with
		trace zero, then the sum of the composite $3$-cycles is zero too and
		so $\R_7$ is not spectrally arbitrary.  
		If $A\in\Q(\R_8)$
		with trace zero and the sum of the composite $3$-cycles is zero, then
		the sums of the composite $2$-cycles and $4$-cycles cannot both
		be positive. Thus, $\R_8$ does not allow refined inertia $(0,0,0,4)$.
		\item Suppose $D(\A)$ has no proper $2$-cycle. Then $\A$ is not refined inertially
		arbitrary by Lemma~\ref{2cycle}.
		\begin{enumerate}
		\item[(I)] Suppose $D(\A)$ has a proper $4$-cycle. Then $D(\A)$ will have
		  exactly one proper $3$-cycle. If only one loop is incident to
		  the proper $3$-cycle, then $D(\A)$ is equivalent to one of the
		  patterns $\J_1$ and $\J_4$ in Table~\ref{IAP4LastCase}. If both loops are incident to the
		  $3$-cycle, then $D(\A)$ is equivalent to $\J_2$ or $\J_3$.
		\item[(II)] Suppose $D(\A)$ has no proper $4$-cycle. If both loops are incident
		to both $3$-cycles, then $D(\A)$ has no composite $4$-cycle in which
		case $\A$ is not inertially arbitrary by Lemma~\ref{kcycle}.  
		 If both loops are on one of the proper $3$-cycles, then $D(\A)$
		 is equivalent to $\J_5$ in Table~\ref{IAP4LastCase}. 
		 If each $3$-cycle is incident to exactly one loop, then $D(\A)$
		 is equivalent to $\J_6$ in Table~\ref{IAP4LastCase}.
		\end{enumerate}
			\end{enumerate}

		\textbf{Case 3:} Suppose $D(\A$) has exactly three loops. Then $\A$ is equivalent to $\A_4$
		and $\A$ is inertially arbitrary but not refined inertially arbitrary by Theorem~\ref{IAPN}.	
			\hfill\end{proof}
\section{Concluding remarks}

In Section~\ref{order3}, we characterized the refined inertially arbitrary patterns of order three. In
Section~\ref{order4}, we determined which of the sparse irreducible patterns of order $4$ 
are spectrally arbitrary, which are refined inertially arbitrary and which are simply inertially arbitrary.
Characterizing all irreducible refined inertially arbitrary patterns of order $4$ is still an open problem. 
Such a project would be of interest especially if it involved a development of new techniques to determine
a refined inertially arbitrary pattern. 
It would also be of interest to explore characteristics of reducible refined inertially arbitrary patterns. 
These could be important for building irreducible patterns, since, for zero patterns, the eigenvalue 
properties that a pattern allows are preserved for superpatterns.

\section{Appendix}

This appendix provides the data for Theorem~\ref{seven}.
Table~\ref{RIAPT} lists digraphs of patterns which are refined inertially arbitrary; for each pattern, we list specific matrices that realize the various refined inertias. Since a pattern is preserved under matrix negation,
we only list matrices having inertias with $n_+\geq n_-$. Refined inertially arbitrary patterns 
$\cP_{4,1}$ and $\cP_{4,2}$ are not represented in Table~\ref{RIAPT}, nor are spectrally arbitrary patterns. 
 Table~\ref{IAPT} and Table~\ref{IAP4LastCase} list digraphs of patterns which are inertially arbitrary but not refined inertially arbitrary;  for each pattern we list specific matrices that realize the various inertias. 
 This data is used in the proof of Theorem~\ref{seven}. 

\scriptsize
\setlength{\tabcolsep}{.02in}
\begin{longtable}{rrlrrrrrrrrrrrrrrrrr}
\caption{Refined inertially arbitrary patterns.}\label{RIAPT}\\

\hline
\hline
\multicolumn{2}{c}{Pattern}  & \multicolumn{17}{c}{Matrix} & Inertia \\
\hline
\hline

\multirow{5}{*}{$\R_1$}
&   \multirow{5}{*}{\tikzpicture \phantom{\node (5) at (-0.25,0.28)[place]{};}
\node (1) at (-0.5,0.5)[place] {};
\node (2) at (0.5,0.5)[place] {};
\node (3) at (0.5,-0.5)[place] {};
\node (4) at (-0.5,-0.5)[place] {};
\draw [nright] (1) to [bend right=10] (2);
\draw [nright] (2) to [bend right=10] (1);
\draw [nright] (3) to [bend right=10] (2);
\draw [nright] (2) to [bend right=10] (3);
\draw [right] (3) to (4);
\draw [right] (4) to (1);
\draw [-] (-0.5,0.55) arc (360:0:3pt);
\endtikzpicture}
& $[$& 1 & 1& 0& 0;& $-1$& 0& 1& 0;& 0& $-2$& 0& 1;& $-1$& 0& 0& $\left. 0\right]$ & $\left(4, 0, 0, 0\right)$\\
& & $[$& -1 & 1& 0& 0;& 1& 0& 1& 0;& 0& $-2$& 0& 1;& 1& 0& 0& $\left. 0\right]$ & $\left(3, 1, 0, 0\right)$\\
& & $[$& 1 & 1& 0& 0;& $-1$& 0& 1& 0;& 0& $-1$& 0& 1;& 0& 0& 0& $\left. 0\right]$ & $\left(3, 0, 1, 0\right)$\\
& & $[$& -1 & 1& 0& 0;& $-1$& 0& 1& 0;& 0& $-1$& 0& 1;& $-2$& 0& 0& $\left. 0\right]$ & $\left(2, 2, 0, 0\right)$\\
& & $[$& -1 & 1& 0& 0;& 1& 0& 1& 0;& 0& $-1$& 0& 1;& 0& 0& 0& $\left. 0\right]$ & $\left(2, 1, 1, 0\right)$\\
& & $[$& 1 & 1& 0& 0;& $-1$& 0& 1& 0;& 0& 0& 0& 1;& 0& 0& 0& $\left. 0\right]$ & $\left(2, 0, 2, 0\right)$\\
& & $[$& 1 & 1& 0& 0;& $-1$& 0& 1& 0;& 0& $-1$& 0& 1;& $-1$& 0& 0& $\left. 0\right]$ & $\left(2, 0, 0, 2\right)$\\
& & $[$& -1 & 1& 0& 0;& 1& 0& 1& 0;& 0& 0& 0& 1;& 0& 0& 0& $\left. 0\right]$ & $\left(1, 1, 2, 0\right)$\\
& & $[$& -1 & 1& 0& 0;& 1& 0& 1& 0;& 0& $-1$& 0& 1;& 1& 0& 0& $\left. 0\right]$ & $\left(1, 1, 0, 2\right)$\\
& & $[$& 1 & 1& 0& 0;& 0& 0& 1& 0;& 0& 0& 0& 1;& 0& 0& 0& $\left. 0\right]$ & $\left(1, 0, 3, 0\right)$\\
& & $[$& 1 & 1& 0& 0;& 0& 0& 1& 0;& 0& $-1$& 0& 1;& 0& 0& 0& $\left. 0\right]$ & $\left(1, 0, 1, 2\right)$\\
& & $[$& 0 & 1& 0& 0;& $-1$& 0& 1& 0;& 0& 1& 0& 1;& 0& 0& 0& $\left. 0\right]$ & $\left(0, 0, 4, 0\right)$\\
& & $[$& 0 & 1& 0& 0;& $-1$& 0& 1& 0;& 0& $-1$& 0& 1;& 0& 0& 0& $\left. 0\right]$ & $\left(0, 0, 2, 2\right)$\\
& & $[$& 0 & 1& 0& 0;& $-1$& 0& 1& 0;& 0& $-1$& 0& 1;& $-1$& 0& 0& $\left. 0\right]$ & $\left(0, 0, 0, 4\right)$\\
\hline

\multirow{5}{*}{$\R_2$}
&   \multirow{5}{*}{\tikzpicture \phantom{\node (5) at (-0.25,0.28)[place]{};}
\node (1) at (-0.5,0.5)[place] {};
\node (2) at (0.5,0.5)[place] {};
\node (3) at (0.5,-0.5)[place] {};
\node (4) at (-0.5,-0.5)[place] {};
\draw [nright] (1) to [bend right=10] (2);
\draw [nright] (2) to [bend right=10] (1);
\draw [nright] (3) to [bend right=10] (4);
\draw [nright] (4) to [bend right=10] (3);
\draw [right] (2) to (3);
\draw [right] (3) to (1);
\draw [-] (-0.5,0.55) arc (360:0:3pt);
\endtikzpicture}
& $[$& 2 & 1& 0& 0;& $-1$& 0& $-1$& 0;& 1& 0& 0& $-2;$& 0& 0& 1& $\left. 0\right]$ & $\left(4, 0, 0, 0\right)$\\
& & $[$& -1 & 1& 0& 0;& 1& 0& $-1$& 0;& 1& 0& 0& $-1;$& 0& 0& 1& $\left. 0\right]$ & $\left(3, 1, 0, 0\right)$\\
& & $[$& 1 & 1& 0& 0;& 0& 0& $-1$& 0;& 1& 0& 0& $-2;$& 0& 0& 1& $\left. 0\right]$ & $\left(3, 0, 1, 0\right)$\\
& & $[$& -1 & 1& 0& 0;& $-1$& 0& $-1$& 0;& 1& 0& 0& $-1;$& 0& 0& 1& $\left. 0\right]$ & $\left(2, 2, 0, 0\right)$\\
& & $[$& -1 & 1& 0& 0;& $-1$& 0& $-2$& 0;& 1& 0& 0& 0;& 0& 0& 1& $\left. 0\right]$ & $\left(2, 1, 1, 0\right)$\\
& & $[$& 1 & 1& 0& 0;& $-1$& 0& 0& 0;& 1& 0& 0& 0;& 0& 0& 1& $\left. 0\right]$ & $\left(2, 0, 2, 0\right)$\\
& & $[$& 1 & 1& 0& 0;& $-1$& 0& $-1$& 0;& 1& 0& 0& $-2;$& 0& 0& 1& $\left. 0\right]$ & $\left(2, 0, 0, 2\right)$\\
& & $[$& -1 & 1& 0& 0;& 0& 0& $-1$& 0;& 1& 0& 0& 1;& 0& 0& 1& $\left. 0\right]$ & $\left(1, 1, 2, 0\right)$\\
& & $[$& -1 & 1& 0& 0;& $-1$& 0& $-2$& 0;& 1& 0& 0& 1;& 0& 0& 1& $\left. 0\right]$ & $\left(1, 1, 0, 2\right)$\\
& & $[$& 1 & 1& 0& 0;& 0& 0& 0& 0;& 1& 0& 0& 0;& 0& 0& 1& $\left. 0\right]$ & $\left(1, 0, 3, 0\right)$\\
& & $[$& 1 & 1& 0& 0;& $-1$& 0& 1& 0;& 1& 0& 0& 0;& 0& 0& 1& $\left. 0\right]$ & $\left(1, 0, 1, 2\right)$\\
& & $[$& 0 & 1& 0& 0;& 0& 0& 0& 0;& 1& 0& 0& 0;& 0& 0& 1& $\left. 0\right]$ & $\left(0, 0, 4, 0\right)$\\
& & $[$& 0 & 1& 0& 0;& $-1$& 0& 0& 0;& 1& 0& 0& 0;& 0& 0& 1& $\left. 0\right]$ & $\left(0, 0, 2, 2\right)$\\
& & $[$& 0 & 1& 0& 0;& $-1$& 0& 0& 0;& 1& 0& 0& $-1;$& 0& 0& 1& $\left. 0\right]$ & $\left(0, 0, 0, 4\right)$\\
\hline

\multirow{5}{*}{$\R_3$}
&   \multirow{5}{*}{\tikzpicture \phantom{\node (5) at (-0.25,0.28)[place]{};}
\node (1) at (-0.5,0.5)[place] {};
\node (2) at (0.5,0.5)[place] {};
\node (3) at (0.5,-0.5)[place] {};
\node (4) at (-0.5,-0.5)[place] {};
\draw [nright] (1) to [bend right=10] (2);
\draw [nright] (2) to [bend right=10] (1);
\draw [nright] (3) to [bend right=10] (4);
\draw [nright] (4) to [bend right=10] (3);
\draw [right] (2) to (3);
\draw [right] (4) to (1);
\draw [-] (-0.5,0.55) arc (360:0:3pt);
\endtikzpicture}
& $[$& 1 & 1& 0& 0;& $-1$& 0& 1& 0;& 0& 0& 0& 1;& 1& 0& $-2$& $\left. 0\right]$ & $\left(4, 0, 0, 0\right)$\\
& & $[$& -1 & 1& 0& 0;& 1& 0& $-1$& 0;& 0& 0& 0& 1;& 1& 0& $-2$& $\left. 0\right]$ & $\left(3, 1, 0, 0\right)$\\
& & $[$& 1 & 1& 0& 0;& $-1$& 0& 1& 0;& 0& 0& 0& 1;& 1& 0& $-1$& $\left. 0\right]$ & $\left(3, 0, 1, 0\right)$\\
& & $[$& -1 & 1& 0& 0;& $-1$& 0& $-1$& 0;& 0& 0& 0& 1;& 1& 0& $-1$& $\left. 0\right]$ & $\left(2, 2, 0, 0\right)$\\
& & $[$& -1 & 1& 0& 0;& 1& 0& $-1$& 0;& 0& 0& 0& 1;& 1& 0& $-1$& $\left. 0\right]$ & $\left(2, 1, 1, 0\right)$\\
& & $[$& 1 & 1& 0& 0;& $-1$& 0& 0& 0;& 0& 0& 0& 1;& 1& 0& 0& $\left. 0\right]$ & $\left(2, 0, 2, 0\right)$\\
& & $[$& 1 & 1& 0& 0;& $-1$& 0& 0& 0;& 0& 0& 0& 1;& 1& 0& $-1$& $\left. 0\right]$ & $\left(2, 0, 0, 2\right)$\\
& & $[$& -1 & 1& 0& 0;& 1& 0& 0& 0;& 0& 0& 0& 1;& 1& 0& 0& $\left. 0\right]$ & $\left(1, 1, 2, 0\right)$\\
& & $[$& -1 & 1& 0& 0;& 1& 0& 0& 0;& 0& 0& 0& 1;& 1& 0& $-1$& $\left. 0\right]$ & $\left(1, 1, 0, 2\right)$\\
& & $[$& 1 & 1& 0& 0;& 0& 0& 0& 0;& 0& 0& 0& 1;& 1& 0& 0& $\left. 0\right]$ & $\left(1, 0, 3, 0\right)$\\
& & $[$& 1 & 1& 0& 0;& 0& 0& 0& 0;& 0& 0& 0& 1;& 1& 0& $-1$& $\left. 0\right]$ & $\left(1, 0, 1, 2\right)$\\
& & $[$& 0 & 1& 0& 0;& $-1$& 0& $-1$& 0;& 0& 0& 0& 1;& 1& 0& 1& $\left. 0\right]$ & $\left(0, 0, 4, 0\right)$\\
& & $[$& 0 & 1& 0& 0;& $-1$& 0& 0& 0;& 0& 0& 0& 1;& 1& 0& 0& $\left. 0\right]$ & $\left(0, 0, 2, 2\right)$\\
& & $[$& 0 & 1& 0& 0;& $-1$& 0& 0& 0;& 0& 0& 0& 1;& 1& 0& $-1$& $\left. 0\right]$ & $\left(0, 0, 0, 4\right)$\\
\hline

\multirow{5}{*}{$\R_4$}
&   \multirow{5}{*}{\tikzpicture \phantom{\node (5) at (-0.25,0.28)[place]{};}
\node (1) at (-0.5,0.5)[place] {};
\node (2) at (0.5,0.5)[place] {};
\node (3) at (0.5,-0.5)[place] {};
\node (4) at (-0.5,-0.5)[place] {};
\draw [nright] (1) to [bend right=10] (2);
\draw [nright] (2) to [bend right=10] (1);
\draw [nright] (3) to [bend right=10] (4);
\draw [nright] (4) to [bend right=10] (3);
\draw [right] (2) to (3);
\draw [right] (4) to (2);
\draw [-] (-0.5,0.55) arc (360:0:3pt);
\endtikzpicture}
& $[$& 1 & $-1$& 0& 0;& 1& 0& $-1$& 0;& 0& 0& 0& 1;& 0& 1& $-2$& $\left. 0\right]$ & $\left(4, 0, 0, 0\right)$\\
& & $[$& -1 & 1& 0& 0;& 1& 0& $-1$& 0;& 0& 0& 0& 1;& 0& 1& $-2$& $\left. 0\right]$ & $\left(3, 1, 0, 0\right)$\\
& & $[$& 2 & $-1$& 0& 0;& 1& 0& $-1$& 0;& 0& 0& 0& 1;& 0& 1& $-2$& $\left. 0\right]$ & $\left(3, 0, 1, 0\right)$\\
& & $[$& -1 & $-1$& 0& 0;& 1& 0& $-1$& 0;& 0& 0& 0& 1;& 0& 1& $-1$& $\left. 0\right]$ & $\left(2, 2, 0, 0\right)$\\
& & $[$& -1 & 1& 0& 0;& 1& 0& $-1$& 0;& 0& 0& 0& 1;& 0& 1& $-1$& $\left. 0\right]$ & $\left(2, 1, 1, 0\right)$\\
& & $[$& 1 & $-1$& 0& 0;& 1& 0& $-1$& 0;& 0& 0& 0& 1;& 0& 1& $-1$& $\left. 0\right]$ & $\left(2, 0, 2, 0\right)$\\
& & $[$& 1 & $-1$& 0& 0;& 1& 0& 0& 0;& 0& 0& 0& 1;& 0& 1& $-1$& $\left. 0\right]$ & $\left(2, 0, 0, 2\right)$\\
& & $[$& -1 & $-1$& 0& 0;& 1& 0& $-2$& 0;& 0& 0& 0& 1;& 0& 1& 2& $\left. 0\right]$ & $\left(1, 1, 2, 0\right)$\\
& & $[$& -1 & 1& 0& 0;& 1& 0& 0& 0;& 0& 0& 0& 1;& 0& 1& $-1$& $\left. 0\right]$ & $\left(1, 1, 0, 2\right)$\\
& & $[$& 1 & $-1$& 0& 0;& 1& 0& 1& 0;& 0& 0& 0& 1;& 0& 1& 1& $\left. 0\right]$ & $\left(1, 0, 3, 0\right)$\\
& & $[$& 1 & 0& 0& 0;& 1& 0& 0& 0;& 0& 0& 0& 1;& 0& 1& $-1$& $\left. 0\right]$ & $\left(1, 0, 1, 2\right)$\\
& & $[$& 0 & 0& 0& 0;& 1& 0& 0& 0;& 0& 0& 0& 1;& 0& 1& 0& $\left. 0\right]$ & $\left(0, 0, 4, 0\right)$\\
& & $[$& 0 & $-1$& 0& 0;& 1& 0& 0& 0;& 0& 0& 0& 1;& 0& 1& 0& $\left. 0\right]$ & $\left(0, 0, 2, 2\right)$\\
& & $[$& 0 & $-1$& 0& 0;& 1& 0& 0& 0;& 0& 0& 0& 1;& 0& 1& $-1$& $\left. 0\right]$ & $\left(0, 0, 0, 4\right)$\\
\hline

\multirow{5}{*}{$\R_5$}
&   \multirow{5}{*}{\tikzpicture \phantom{\node (5) at (-0.25,0.28)[place]{};}
\node (1) at (-0.5,0.5)[place] {};
\node (2) at (0.5,0.5)[place] {};
\node (3) at (0.5,-0.5)[place] {};
\node (4) at (-0.5,-0.5)[place] {};
\draw [nright] (1) to [bend right=10] (2);
\draw [nright] (2) to [bend right=10] (1);
\draw [nright] (3) to [bend right=10] (4);
\draw [nright] (4) to [bend right=10] (3);
\draw [right] (1) to (4);
\draw [right] (3) to (1);
\draw [-] (-0.5,0.55) arc (360:0:3pt);
\endtikzpicture}
& $[$& 2 & $-1$& 0& $-1;$& 1& 0& 0& 0;& 1& 0& 0& $-2;$& 0& 0& 1& $\left. 0\right]$ & $\left(4, 0, 0, 0\right)$\\
& & $[$& -1 & 1& 0& $-1;$& 1& 0& 0& 0;& 1& 0& 0& $-1;$& 0& 0& 1& $\left. 0\right]$ & $\left(3, 1, 0, 0\right)$\\
& & $[$& 1 & 0& 0& $-1;$& 1& 0& 0& 0;& 1& 0& 0& $-2;$& 0& 0& 1& $\left. 0\right]$ & $\left(3, 0, 1, 0\right)$\\
& & $[$& -1 & $-1$& 0& $-1;$& 1& 0& 0& 0;& 1& 0& 0& $-1;$& 0& 0& 1& $\left. 0\right]$ & $\left(2, 2, 0, 0\right)$\\
& & $[$& -1 & $-1$& 0& $-2;$& 1& 0& 0& 0;& 1& 0& 0& 0;& 0& 0& 1& $\left. 0\right]$ & $\left(2, 1, 1, 0\right)$\\
& & $[$& 1 & $-1$& 0& 0;& 1& 0& 0& 0;& 1& 0& 0& 0;& 0& 0& 1& $\left. 0\right]$ & $\left(2, 0, 2, 0\right)$\\
& & $[$& 1 & $-1$& 0& $-1;$& 1& 0& 0& 0;& 1& 0& 0& $-2;$& 0& 0& 1& $\left. 0\right]$ & $\left(2, 0, 0, 2\right)$\\
& & $[$& -1 & 0& 0& $-1;$& 1& 0& 0& 0;& 1& 0& 0& 1;& 0& 0& 1& $\left. 0\right]$ & $\left(1, 1, 2, 0\right)$\\
& & $[$& -1 & $-1$& 0& $-2;$& 1& 0& 0& 0;& 1& 0& 0& 1;& 0& 0& 1& $\left. 0\right]$ & $\left(1, 1, 0, 2\right)$\\
& & $[$& 1 & 0& 0& 0;& 1& 0& 0& 0;& 1& 0& 0& 0;& 0& 0& 1& $\left. 0\right]$ & $\left(1, 0, 3, 0\right)$\\
& & $[$& 1 & $-1$& 0& 1;& 1& 0& 0& 0;& 1& 0& 0& 0;& 0& 0& 1& $\left. 0\right]$ & $\left(1, 0, 1, 2\right)$\\
& & $[$& 0 & 0& 0& 0;& 1& 0& 0& 0;& 1& 0& 0& 0;& 0& 0& 1& $\left. 0\right]$ & $\left(0, 0, 4, 0\right)$\\
& & $[$& 0 & $-1$& 0& 0;& 1& 0& 0& 0;& 1& 0& 0& 0;& 0& 0& 1& $\left. 0\right]$ & $\left(0, 0, 2, 2\right)$\\
& & $[$& 0 & $-1$& 0& 0;& 1& 0& 0& 0;& 1& 0& 0& $-1;$& 0& 0& 1& $\left. 0\right]$ & $\left(0, 0, 0, 4\right)$\\
\hline

\multirow{5}{*}{$\R_6$}
&   \multirow{5}{*}{\tikzpicture \phantom{\node (5) at (-0.25,0.28)[place]{};}
\node (1) at (-0.5,0.5)[place] {};
\node (2) at (0.5,0.5)[place] {};
\node (3) at (0.5,-0.5)[place] {};
\node (4) at (-0.5,-0.5)[place] {};
\draw [nright] (1) to [bend right=10] (2);
\draw [nright] (2) to [bend right=10] (1);
\draw [right] (2) to (3);
\draw [right] (3) to (4);
\draw [right] (4) to (2);
\draw [-] (-0.5,0.55) arc (360:0:3pt);
\draw [-] (0.7,-0.55) arc (360:0:3pt);
\endtikzpicture}
& $[$& 1 & $-1$& 0& 0;& 1& 0& 1& 0;& 0& 0& 2& 1;& 0& 1& 0& $\left. 0\right]$ & $\left(4, 0, 0, 0\right)$\\
& & $[$& -1 & 1& 0& 0;& 1& 0& 1& 0;& 0& 0& 2& 1;& 0& 1& 0& $\left. 0\right]$ & $\left(3, 1, 0, 0\right)$\\
& & $[$& 0 & $-1$& 0& 0;& 1& 0& $-1$& 0;& 0& 0& 2& 1;& 0& 1& 0& $\left. 0\right]$ & $\left(3, 0, 1, 0\right)$\\
& & $[$& -1 & $-1$& 0& 0;& 1& 0& $-1$& 0;& 0& 0& 0& 1;& 0& 1& 0& $\left. 0\right]$ & $\left(2, 2, 0, 0\right)$\\
& & $[$& -1 & 1& 0& 0;& 1& 0& 0& 0;& 0& 0& 1& 1;& 0& 1& 0& $\left. 0\right]$ & $\left(2, 1, 1, 0\right)$\\
& & $[$& 0 & $-1$& 0& 0;& 1& 0& $-1$& 0;& 0& 0& 1& 1;& 0& 1& 0& $\left. 0\right]$ & $\left(2, 0, 2, 0\right)$\\
& & $[$& 1 & $-1$& 0& 0;& 1& 0& 1& 0;& 0& 0& 1& 1;& 0& 1& 0& $\left. 0\right]$ & $\left(2, 0, 0, 2\right)$\\
& & $[$& -1 & 0& 0& 0;& 1& 0& 0& 0;& 0& 0& 1& 1;& 0& 1& 0& $\left. 0\right]$ & $\left(1, 1, 2, 0\right)$\\
& & $[$& -1 & 1& 0& 0;& 1& 0& 1& 0;& 0& 0& 1& 1;& 0& 1& 0& $\left. 0\right]$ & $\left(1, 1, 0, 2\right)$\\
& & $[$& 0 & 0& 0& 0;& 1& 0& 0& 0;& 0& 0& 1& 1;& 0& 1& 0& $\left. 0\right]$ & $\left(1, 0, 3, 0\right)$\\
& & $[$& 0 & $-1$& 0& 0;& 1& 0& 0& 0;& 0& 0& 1& 1;& 0& 1& 0& $\left. 0\right]$ & $\left(1, 0, 1, 2\right)$\\
& & $[$& 0 & 0& 0& 0;& 1& 0& 0& 0;& 0& 0& 0& 1;& 0& 1& 0& $\left. 0\right]$ & $\left(0, 0, 4, 0\right)$\\
& & $[$& 0 & $-1$& 0& 0;& 1& 0& 0& 0;& 0& 0& 0& 1;& 0& 1& 0& $\left. 0\right]$ & $\left(0, 0, 2, 2\right)$\\
& & $[$& $-\frac{1}{2}$ & $-2$& 0& 0;& 1& 0& $-1$& 0;& 0& 0& $\frac{1}{2}$& 1;& 0& 1& 0& $\left. 0\right]$ & $\left(0, 0, 0, 4\right)$\\
\hline

\multirow{5}{*}{$\R_7$}
&   \multirow{5}{*}{\tikzpicture \phantom{\node (5) at (-0.25,0.28)[place]{};}
\node (1) at (-0.5,0.5)[place] {};
\node (2) at (0.5,0.5)[place] {};
\node (3) at (0.5,-0.5)[place] {};
\node (4) at (-0.5,-0.5)[place] {};
\draw [nright] (3) to [bend right=10] (4);
\draw [nright] (4) to [bend right=10] (3);
\draw [right] (1) to (2);
\draw [right] (2) to (3);
\draw [right] (4) to (1);
\draw [-] (-0.5,0.55) arc (360:0:3pt);
\draw [-] (0.7,0.55) arc (360:0:3pt);
\endtikzpicture}
& $[$& 1 & 1& 0& 0;& 0& 1& 1& 0;& 0& 0& 0& 1;& 1& 0& $-2$& $\left. 0\right]$ & $\left(4, 0, 0, 0\right)$\\
& & $[$& -1 & 1& 0& 0;& 0& 2& 1& 0;& 0& 0& 0& 1;& 1& 0& $-1$& $\left. 0\right]$ & $\left(3, 1, 0, 0\right)$\\
& & $[$& 1 & 1& 0& 0;& 0& 1& 1& 0;& 0& 0& 0& 1;& 1& 0& $-1$& $\left. 0\right]$ & $\left(3, 0, 1, 0\right)$\\
& & $[$& -1 & 1& 0& 0;& 0& $-1$& 1& 0;& 0& 0& 0& 1;& $-1$& 0& $-1$& $\left. 0\right]$ & $\left(2, 2, 0, 0\right)$\\
& & $[$& -1 & 1& 0& 0;& 0& 2& 1& 0;& 0& 0& 0& 1;& 2& 0& 1& $\left. 0\right]$ & $\left(2, 1, 1, 0\right)$\\
& & $[$& 1 & 1& 0& 0;& 0& 1& 1& 0;& 0& 0& 0& 1;& 0& 0& 0& $\left. 0\right]$ & $\left(2, 0, 2, 0\right)$\\
& & $[$& 1 & 1& 0& 0;& 0& 1& 1& 0;& 0& 0& 0& 1;& 0& 0& $-1$& $\left. 0\right]$ & $\left(2, 0, 0, 2\right)$\\
& & $[$& -1 & 1& 0& 0;& 0& 1& 1& 0;& 0& 0& 0& 1;& 0& 0& 0& $\left. 0\right]$ & $\left(1, 1, 2, 0\right)$\\
& & $[$& -1 & 1& 0& 0;& 0& 1& 1& 0;& 0& 0& 0& 1;& $-1$& 0& $-2$& $\left. 0\right]$ & $\left(1, 1, 0, 2\right)$\\
& & $[$& 0 & 1& 0& 0;& 0& 1& 1& 0;& 0& 0& 0& 1;& 0& 0& 0& $\left. 0\right]$ & $\left(1, 0, 3, 0\right)$\\
& & $[$& 0 & 1& 0& 0;& 0& 1& 1& 0;& 0& 0& 0& 1;& 0& 0& $-1$& $\left. 0\right]$ & $\left(1, 0, 1, 2\right)$\\
& & $[$& -1 & 1& 0& 0;& 0& 1& 1& 0;& 0& 0& 0& 1;& $-1$& 0& $-1$& $\left. 0\right]$ & $\left(0, 0, 4, 0\right)$\\
& & $[$& -1 & 1& 0& 0;& 0& 1& 1& 0;& 0& 0& 0& 1;& $-2$& 0& $-2$& $\left. 0\right]$ & $\left(0, 0, 2, 2\right)$\\
& & $[$& 0 & 1& 0& 0;& 0& 0& 1& 0;& 0& 0& 0& 1;& $-1$& 0& $-2$& $\left. 0\right]$ & $\left(0, 0, 0, 4\right)$\\
\hline
\end{longtable}

\begin{longtable}{rrlrrrrrrrrrrrrrrrrr}
\caption{Inertially arbitrary patterns with a proper $2$-cycle.}\label{IAPT}\\
\hline
\hline
\multicolumn{2}{c}{Pattern}  & \multicolumn{17}{c}{Matrix} & Inertia \\
\hline
\hline

\multirow{5}{*}{$\B_1$}
&   \multirow{5}{*}{\tikzpicture \phantom{\node (5) at (-0.25,0.28)[place]{};}
\node (1) at (-0.5,0.5)[place] {};
\node (2) at (0.5,0.5)[place] {};
\node (3) at (0.5,-0.5)[place] {};
\node (4) at (-0.5,-0.5)[place] {};
\draw [nright] (1) to [bend right=10] (3);
\draw [nright] (3) to [bend right=10] (1);
\draw [right] (1) to (2);
\draw [right] (2) to (3);
\draw [right] (3) to (4);
\draw [right] (4) to (1);
\draw [-] (-0.5,0.55) arc (360:0:3pt);
\endtikzpicture}
& $[$& 2 & $-1$& 2& 0;& 0& 0& 1& 0;& $-2$& 0& 0& 1;& 1& 0& 0& $\left. 0\right]$ & $\left(4, 0, 0\right)$\\
& & $[$& -1 & $-1$& 0& 0;& 0& 0& 1& 0;& 2& 0& 0& $-1;$& 1& 0& 0& $\left. 0\right]$ & $\left(3, 1, 0\right)$\\
& & $[$& 1 & $-1$& 2& 0;& 0& 0& 1& 0;& $-1$& 0& 0& 0;& 1& 0& 0& $\left. 0\right]$ & $\left(3, 0, 1\right)$\\
& & $[$& -1 & $-1$& $-1$& 0;& 0& 0& 1& 0;& $-1$& 0& 0& 1;& 1& 0& 0& $\left. 0\right]$ & $\left(2, 2, 0\right)$\\
& & $[$& -1 & $-1$& 0& 0;& 0& 0& 1& 0;& 1& 0& 0& 0;& 1& 0& 0& $\left. 0\right]$ & $\left(2, 1, 1\right)$\\
& & $[$& 1 & 0& $-1$& 0;& 0& 0& 1& 0;& 1& 0& 0& 0;& 1& 0& 0& $\left. 0\right]$ & $\left(2, 0, 2\right)$\\
& & $[$& -1 & $-1$& 0& 0;& 0& 0& 1& 0;& 1& 0& 0& $-1;$& 1& 0& 0& $\left. 0\right]$ & $\left(1, 1, 2\right)$\\
& & $[$& 1 & $-1$& $-1$& 0;& 0& 0& 1& 0;& 0& 0& 0& 0;& 1& 0& 0& $\left. 0\right]$ & $\left(1, 0, 3\right)$\\
& & $[$& 0 & $-1$& $-1$& 0;& 0& 0& 1& 0;& 0& 0& 0& 0;& 1& 0& 0& $\left. 0\right]$ & $\left(0, 0, 4\right)$\\
\hline

\multirow{5}{*}{$\B_2$}
&   \multirow{5}{*}{\tikzpicture \phantom{\node (5) at (-0.25,0.28)[place]{};}
\node (1) at (-0.5,0.5)[place] {};
\node (2) at (0.5,0.5)[place] {};
\node (3) at (0.5,-0.5)[place] {};
\node (4) at (-0.5,-0.5)[place] {};
\draw [nright] (1) to [bend right=10] (2);
\draw [nright] (2) to [bend right=10] (1);
\draw [right] (2) to (4);
\draw [right] (3) to (2);
\draw [right] (4) to (3);
\draw [right] (4) to (1);
\draw [-] (-0.5,0.55) arc (360:0:3pt);
\endtikzpicture}
& $[$& 8 & 1& 0& 0;& $-24$& 0& 0& 1;& 0& 2& 0& 0;& 30& 0& 1& $\left. 0\right]$ & $\left(4, 0, 0\right)$\\
& & $[$& -1 & 1& 0& 0;& 1& 0& 0& 1;& 0& 1& 0& 0;& $-2$& 0& 1& $\left. 0\right]$ & $\left(3, 1, 0\right)$\\
& & $[$& 1 & 1& 0& 0;& $-2$& 0& 0& 1;& 0& 0& 0& 0;& 1& 0& 1& $\left. 0\right]$ & $\left(3, 0, 1\right)$\\
& & $[$& -1 & 1& 0& 0;& $-1$& 0& 0& 1;& 0& $-1$& 0& 0;& $-1$& 0& 1& $\left. 0\right]$ & $\left(2, 2, 0\right)$\\
& & $[$& -1 & 1& 0& 0;& $-1$& 0& 0& 1;& 0& 0& 0& 0;& $-2$& 0& 1& $\left. 0\right]$ & $\left(2, 1, 1\right)$\\
& & $[$& 1 & 1& 0& 0;& $-1$& 0& 0& 1;& 0& 0& 0& 0;& 0& 0& 1& $\left. 0\right]$ & $\left(2, 0, 2\right)$\\
& & $[$& -1 & 1& 0& 0;& 0& 0& 0& 1;& 0& 1& 0& 0;& $-2$& 0& 1& $\left. 0\right]$ & $\left(1, 1, 2\right)$\\
& & $[$& 1 & 1& 0& 0;& $-1$& 0& 0& 1;& 0& 0& 0& 0;& 1& 0& 1& $\left. 0\right]$ & $\left(1, 0, 3\right)$\\
& & $[$& 0 & 1& 0& 0;& $-1$& 0& 0& 1;& 0& $-1$& 0& 0;& 1& 0& 1& $\left. 0\right]$ & $\left(0, 0, 4\right)$\\
\hline

\multirow{5}{*}{$\B_3$}
&   \multirow{5}{*}{\tikzpicture \phantom{\node (5) at (-0.25,0.28)[place]{};}
\node (1) at (-0.5,0.5)[place] {};
\node (2) at (0.5,0.5)[place] {};
\node (3) at (0.5,-0.5)[place] {};
\node (4) at (-0.5,-0.5)[place] {};
\draw [nright] (2) to [bend right=10] (4);
\draw [nright] (4) to [bend right=10] (2);
\draw [right] (1) to (2);
\draw [right] (2) to (3);
\draw [right] (3) to (4);
\draw [right] (4) to (1);
\draw [-] (-0.5,0.55) arc (360:0:3pt);
\endtikzpicture}
& $[$& 2 & $-1$& 0& 0;& 0& 0& $-1$& 2;& 0& 0& 0& 1;& 2& $-2$& 0& $\left. 0\right]$ & $\left(4, 0, 0\right)$\\
& & $[$& -1 & $-1$& 0& 0;& 0& 0& $-1$& 1;& 0& 0& 0& 1;& 2& 0& 0& $\left. 0\right]$ & $\left(3, 1, 0\right)$\\
& & $[$& 1 & $-1$& 0& 0;& 0& 0& $-1$& 2;& 0& 0& 0& 1;& 1& $-1$& 0& $\left. 0\right]$ & $\left(3, 0, 1\right)$\\
& & $[$& -1 & $-1$& 0& 0;& 0& 0& $-1$& $-1;$& 0& 0& 0& 1;& $-1$& 0& 0& $\left. 0\right]$ & $\left(2, 2, 0\right)$\\
& & $[$& -1 & $-1$& 0& 0;& 0& 0& $-1$& 0;& 0& 0& 0& 1;& 1& 1& 0& $\left. 0\right]$ & $\left(2, 1, 1\right)$\\
& & $[$& 1 & $-1$& 0& 0;& 0& 0& 0& $-1;$& 0& 0& 0& 1;& $-1$& 1& 0& $\left. 0\right]$ & $\left(2, 0, 2\right)$\\
& & $[$& -1 & $-1$& 0& 0;& 0& 0& $-1$& 0;& 0& 0& 0& 1;& 2& 1& 0& $\left. 0\right]$ & $\left(1, 1, 2\right)$\\
& & $[$& 1 & $-1$& 0& 0;& 0& 0& $-1$& $-1;$& 0& 0& 0& 1;& 0& 0& 0& $\left. 0\right]$ & $\left(1, 0, 3\right)$\\
& & $[$& 0 & $-1$& 0& 0;& 0& 0& $-1$& $-1;$& 0& 0& 0& 1;& 0& 0& 0& $\left. 0\right]$ & $\left(0, 0, 4\right)$\\
\hline

\multirow{5}{*}{$\B_4$}
&   \multirow{5}{*}{\tikzpicture \phantom{\node (5) at (-0.25,0.28)[place]{};}
\node (1) at (-0.5,0.5)[place] {};
\node (2) at (0.5,0.5)[place] {};
\node (3) at (0.5,-0.5)[place] {};
\node (4) at (-0.5,-0.5)[place] {};
\draw [nright] (2) to [bend right=10] (3);
\draw [nright] (3) to [bend right=10] (2);
\draw [right] (1) to (2);
\draw [right] (2) to (4);
\draw [right] (4) to (3);
\draw [right] (4) to (1);
\draw [-] (-0.5,0.55) arc (360:0:3pt);
\endtikzpicture}
& $[$& 8 & $-162$& 0& 0;& 0& 0& $-24$& 1;& 0& 1& 0& 0;& 1& 0& 2& $\left. 0\right]$ & $\left(4, 0, 0\right)$\\
& & $[$& -1 & $-2$& 0& 0;& 0& 0& $-1$& 1;& 0& 1& 0& 0;& 1& 0& 1& $\left. 0\right]$ & $\left(3, 1, 0\right)$\\
& & $[$& 1 & $-1$& 0& 0;& 0& 0& $-2$& 1;& 0& 1& 0& 0;& 1& 0& 0& $\left. 0\right]$ & $\left(3, 0, 1\right)$\\
& & $[$& -1 & $-1$& 0& 0;& 0& 0& $-1$& 1;& 0& 1& 0& 0;& 1& 0& $-1$& $\left. 0\right]$ & $\left(2, 2, 0\right)$\\
& & $[$& -1 & $-1$& 0& 0;& 0& 0& $-1$& 1;& 0& 1& 0& 0;& 1& 0& 0& $\left. 0\right]$ & $\left(2, 1, 1\right)$\\
& & $[$& 1 & $-1$& 0& 0;& 0& 0& $-1$& 1;& 0& 1& 0& 0;& 1& 0& 0& $\left. 0\right]$ & $\left(2, 0, 2\right)$\\
& & $[$& -1 & $-1$& 0& 0;& 0& 0& 1& 1;& 0& 1& 0& 0;& 1& 0& 0& $\left. 0\right]$ & $\left(1, 1, 2\right)$\\
& & $[$& 1 & 0& 0& 0;& 0& 0& $-1$& 1;& 0& 1& 0& 0;& 1& 0& 0& $\left. 0\right]$ & $\left(1, 0, 3\right)$\\
& & $[$& 0 & $-1$& 0& 0;& 0& 0& $-1$& 1;& 0& 1& 0& 0;& 1& 0& 1& $\left. 0\right]$ & $\left(0, 0, 4\right)$\\
\hline

\multirow{5}{*}{$\B_5$}
&   \multirow{5}{*}{\tikzpicture \phantom{\node (5) at (-0.25,0.28)[place]{};}
\node (1) at (-0.5,0.5)[place] {};
\node (2) at (0.5,0.5)[place] {};
\node (3) at (0.5,-0.5)[place] {};
\node (4) at (-0.5,-0.5)[place] {};
\draw [nright] (4) to [bend right=10] (2);
\draw [nright] (2) to [bend right=10] (4);
\draw [right] (1) to (2);
\draw [right] (3) to (2);
\draw [right] (4) to (3);
\draw [right] (4) to (1);
\draw [-] (-0.5,0.55) arc (360:0:3pt);
\endtikzpicture}
& $[$& 8 & $-162$& 0& 0;& 0& 0& 0& 1;& 0& 2& 0& 0;& 1& $-24$& 1& $\left. 0\right]$ & $\left(4, 0, 0\right)$\\
& & $[$& -1 & $-2$& 0& 0;& 0& 0& 0& 1;& 0& 1& 0& 0;& 1& $-1$& 1& $\left. 0\right]$ & $\left(3, 1, 0\right)$\\
& & $[$& 1 & $-1$& 0& 0;& 0& 0& 0& 1;& 0& 0& 0& 0;& 1& $-2$& 1& $\left. 0\right]$ & $\left(3, 0, 1\right)$\\
& & $[$& -1 & $-1$& 0& 0;& 0& 0& 0& 1;& 0& $-1$& 0& 0;& 1& $-1$& 1& $\left. 0\right]$ & $\left(2, 2, 0\right)$\\
& & $[$& -1 & $-1$& 0& 0;& 0& 0& 0& 1;& 0& 0& 0& 0;& 1& $-1$& 1& $\left. 0\right]$ & $\left(2, 1, 1\right)$\\
& & $[$& 1 & $-1$& 0& 0;& 0& 0& 0& 1;& 0& 0& 0& 0;& 1& $-1$& 1& $\left. 0\right]$ & $\left(2, 0, 2\right)$\\
& & $[$& -1 & $-1$& 0& 0;& 0& 0& 0& 1;& 0& 0& 0& 0;& 1& 1& 1& $\left. 0\right]$ & $\left(1, 1, 2\right)$\\
& & $[$& 1 & 0& 0& 0;& 0& 0& 0& 1;& 0& 0& 0& 0;& 1& $-1$& 1& $\left. 0\right]$ & $\left(1, 0, 3\right)$\\
& & $[$& 0 & $-1$& 0& 0;& 0& 0& 0& 1;& 0& 1& 0& 0;& 1& $-1$& 1& $\left. 0\right]$ & $\left(0, 0, 4\right)$\\
\hline

\multirow{5}{*}{$\B_6$}
&   \multirow{5}{*}{\tikzpicture \phantom{\node (5) at (-0.25,0.28)[place]{};}
\node (1) at (-0.5,0.5)[place] {};
\node (2) at (0.5,0.5)[place] {};
\node (3) at (0.5,-0.5)[place] {};
\node (4) at (-0.5,-0.5)[place] {};
\draw [nright] (1) to [bend right=10] (2);
\draw [nright] (2) to [bend right=10] (1);
\draw [nright] (3) to [bend right=10] (2);
\draw [nright] (2) to [bend right=10] (3);
\draw [right] (3) to (4);
\draw [right] (4) to (2);
\draw [-] (-0.5,0.55) arc (360:0:3pt);
\endtikzpicture}
& $[$& 1 & $-2$& 0& 0;& 1& 0& 1& 0;& 0& $-1$& 0& 1;& 0& 1& 0& $\left. 0\right]$ & $\left(4, 0, 0\right)$\\
& & $[$& 1 & $-1$& 0& 0;& 1& 0& 1& 0;& 0& $-1$& 0& $-1;$& 0& 1& 0& $\left. 0\right]$ & $\left(3, 1, 0\right)$\\
& & $[$& 1 & $-1$& 0& 0;& 1& 0& 1& 0;& 0& $-1$& 0& 0;& 0& 1& 0& $\left. 0\right]$ & $\left(3, 0, 1\right)$\\
& & $[$& -1 & $-1$& 0& 0;& 1& 0& 1& 0;& 0& $-1$& 0& $-1;$& 0& 1& 0& $\left. 0\right]$ & $\left(2, 2, 0\right)$\\
& & $[$& -1 & 1& 0& 0;& 1& 0& 1& 0;& 0& $-1$& 0& 0;& 0& 1& 0& $\left. 0\right]$ & $\left(2, 1, 1\right)$\\
& & $[$& 1 & $-1$& 0& 0;& 1& 0& 1& 0;& 0& 0& 0& 0;& 0& 1& 0& $\left. 0\right]$ & $\left(2, 0, 2\right)$\\
& & $[$& -1 & 1& 0& 0;& 1& 0& 1& 0;& 0& 0& 0& 0;& 0& 1& 0& $\left. 0\right]$ & $\left(1, 1, 2\right)$\\
& & $[$& 1 & 0& 0& 0;& 1& 0& 1& 0;& 0& $-1$& 0& 0;& 0& 1& 0& $\left. 0\right]$ & $\left(1, 0, 3\right)$\\
& & $[$& 0 & $-1$& 0& 0;& 1& 0& 1& 0;& 0& $-1$& 0& 0;& 0& 1& 0& $\left. 0\right]$ & $\left(0, 0, 4\right)$\\
\hline

\multirow{5}{*}{$\B_7$}
&   \multirow{5}{*}{\tikzpicture \phantom{\node (5) at (-0.25,0.28)[place]{};}
\node (1) at (-0.5,0.5)[place] {};
\node (2) at (0.5,0.5)[place] {};
\node (3) at (0.5,-0.5)[place] {};
\node (4) at (-0.5,-0.5)[place] {};
\draw [nright] (1) to [bend right=10] (2);
\draw [nright] (2) to [bend right=10] (1);
\draw [right] (2) to (3);
\draw [right] (3) to (4);
\draw [right] (4) to (2);
\draw [-] (-0.5,0.55) arc (360:0:3pt);
\draw [-] (0.7,0.55) arc (360:0:3pt);
\endtikzpicture}
& $[$& 1 & $-2$& 0& 0;& 1& 1& 1& 0;& 0& 0& 0& 1;& 0& 1& 0& $\left. 0\right]$ & $\left(4, 0, 0\right)$\\
& & $[$& 1 & $-1$& 0& 0;& 1& $-1$& $-1$& 0;& 0& 0& 0& 1;& 0& 1& 0& $\left. 0\right]$ & $\left(3, 1, 0\right)$\\
& & $[$& 0 & $-1$& 0& 0;& 1& 2& 1& 0;& 0& 0& 0& 1;& 0& 1& 0& $\left. 0\right]$ & $\left(3, 0, 1\right)$\\
& & $[$& -1 & $-1$& 0& 0;& 1& $-1$& $-1$& 0;& 0& 0& 0& 1;& 0& 1& 0& $\left. 0\right]$ & $\left(2, 2, 0\right)$\\
& & $[$& 0 & $-1$& 0& 0;& 1& $-1$& $-2$& 0;& 0& 0& 0& 1;& 0& 1& 0& $\left. 0\right]$ & $\left(2, 1, 1\right)$\\
& & $[$& 0 & $-1$& 0& 0;& 1& 1& 0& 0;& 0& 0& 0& 1;& 0& 1& 0& $\left. 0\right]$ & $\left(2, 0, 2\right)$\\
& & $[$& -1 & $-1$& 0& 0;& 1& 2& 0& 0;& 0& 0& 0& 1;& 0& 1& 0& $\left. 0\right]$ & $\left(1, 1, 2\right)$\\
& & $[$& -1 & $-2$& 0& 0;& 1& 2& 0& 0;& 0& 0& 0& 1;& 0& 1& 0& $\left. 0\right]$ & $\left(1, 0, 3\right)$\\
& & $[$& -1 & $-1$& 0& 0;& 1& 1& 0& 0;& 0& 0& 0& 1;& 0& 1& 0& $\left. 0\right]$ & $\left(0, 0, 4\right)$\\
\hline

\multirow{5}{*}{$\B_8$}
&   \multirow{5}{*}{\tikzpicture \phantom{\node (5) at (-0.25,0.28)[place]{};}
\node (1) at (-0.5,0.5)[place] {};
\node (2) at (0.5,0.5)[place] {};
\node (3) at (0.5,-0.5)[place] {};
\node (4) at (-0.5,-0.5)[place] {};
\draw [nright] (3) to [bend right=10] (4);
\draw [nright] (4) to [bend right=10] (3);
\draw [right] (1) to (2);
\draw [right] (2) to (3);
\draw [right] (3) to (1);
\draw [-] (-0.5,0.55) arc (360:0:3pt);
\draw [-] (0.7,0.55) arc (360:0:3pt);
\endtikzpicture}
& $[$& 1 & $-1$& 0& 0;& 0& 1& 1& 0;& 1& 0& 0& $-2;$& 0& 0& 1& $\left. 0\right]$ & $\left(4, 0, 0\right)$\\
& & $[$& -1 & $-1$& 0& 0;& 0& 1& 1& 0;& 1& 0& 0& $-1;$& 0& 0& 1& $\left. 0\right]$ & $\left(3, 1, 0\right)$\\
& & $[$& 0 & $-1$& 0& 0;& 0& 1& 1& 0;& 1& 0& 0& $-2;$& 0& 0& 1& $\left. 0\right]$ & $\left(3, 0, 1\right)$\\
& & $[$& -1 & $-1$& 0& 0;& 0& $-1$& 1& 0;& 1& 0& 0& $-1;$& 0& 0& 1& $\left. 0\right]$ & $\left(2, 2, 0\right)$\\
& & $[$& -1 & $-1$& 0& 0;& 0& 0& 1& 0;& 1& 0& 0& $-1;$& 0& 0& 1& $\left. 0\right]$ & $\left(2, 1, 1\right)$\\
& & $[$& 0 & $-1$& 0& 0;& 0& 1& 1& 0;& 1& 0& 0& $-1;$& 0& 0& 1& $\left. 0\right]$ & $\left(2, 0, 2\right)$\\
& & $[$& -1 & $-1$& 0& 0;& 0& 0& 1& 0;& 1& 0& 0& 1;& 0& 0& 1& $\left. 0\right]$ & $\left(1, 1, 2\right)$\\
& & $[$& 0 & 0& 0& 0;& 0& 1& 1& 0;& 1& 0& 0& $-1;$& 0& 0& 1& $\left. 0\right]$ & $\left(1, 0, 3\right)$\\
& & $[$& 0 & 0& 0& 0;& 0& 0& 1& 0;& 1& 0& 0& $-1;$& 0& 0& 1& $\left. 0\right]$ & $\left(0, 0, 4\right)$\\
\hline
\end{longtable}
\begin{longtable}{rrlrrrrrrrrrrrrrrrrr}
\caption{Inertially arbitrary patterns with no proper $2$-cycle.}\label{IAP4LastCase}\\
\hline
\hline
\multicolumn{2}{c}{Pattern}  & \multicolumn{17}{c}{Matrix} & Inertia \\
\hline
\hline

\multirow{5}{*}{$\J_1$}
&   \multirow{5}{*}{\tikzpicture \phantom{\node (5) at (-0.25,0.28)[place]{};}
\node (1) at (-0.5,0.5)[place] {};
\node (2) at (0.5,0.5)[place] {};
\node (3) at (0.5,-0.5)[place] {};
\node (4) at (-0.5,-0.5)[place] {};
\draw [right] (3) to (4);
\draw [right] (4) to (1);
\draw [right] (1) to (2);
\draw [right] (2) to (3);
\draw [right] (1) to (3);
\draw [-] (-0.5,0.55) arc (360:0:3pt);
\draw [-] (0.7,0.55) arc (360:0:3pt);
\endtikzpicture}
  & $[$& 2& 7& 4& 0;& 0& 2& 1& 0;& 0& 0& 0& 1;& 1& 0& 0;& $\left. 0\right]$ & $\left(4, 0, 0\right)$\\
& & $[$& $-1$& 0& $-1$& 0;& 0& 1& 1& 0;& 0& 0& 0& 1;& 1& 0& 0;& $\left. 0\right]$ & $\left(3, 1, 0\right)$\\
& & $[$& 1& 1& 1& 0;& 0& 1& 1& 0;& 0& 0& 0& 1;& 1& 0& 0;& $\left. 0\right]$ & $\left(3, 0, 1\right)$\\
& & $[$& $-1$& $-1$& $-1$& 0;& 0& $-1$& 1& 0;& 0& 0& 0& 1;& 1& 0& 0;& $\left. 0\right]$ & $\left(2, 2, 0\right)$\\
& & $[$& $-1$& $-1$& $-1$& 0;& 0& 1& 1& 0;& 0& 0& 0& 1;& 1& 0& 0;& $\left. 0\right]$ & $\left(2, 1, 1\right)$\\
& & $[$& 1& 0& 0& 0;& 0& 1& 1& 0;& 0& 0& 0& 1;& 1& 0& 0;& $\left. 0\right]$ & $\left(2, 0, 2\right)$\\
& & $[$& $-1$& 0& 0& 0;& 0& 1& 1& 0;& 0& 0& 0& 1;& 1& 0& 0;& $\left. 0\right]$ & $\left(1, 1, 2\right)$\\
& & $[$& 0& 0& 0& 0;& 0& 1& 1& 0;& 0& 0& 0& 1;& 1& 0& 0;& $\left. 0\right]$ & $\left(1, 0, 3\right)$\\
& & $[$& 0& 0& 0& 0;& 0& 0& 1& 0;& 0& 0& 0& 1;& 1& 0& 0;& $\left. 0\right]$ & $\left(0, 0, 4\right)$\\
\hline

\multirow{5}{*}{$\J_2$}
&   \multirow{5}{*}{\tikzpicture \phantom{\node (5) at (-0.25,0.28)[place]{};}
\node (1) at (-0.5,0.5)[place] {};
\node (2) at (0.5,0.5)[place] {};
\node (3) at (0.5,-0.5)[place] {};
\node (4) at (-0.5,-0.5)[place] {};
\draw [right] (3) to (4);
\draw [right] (4) to (1);
\draw [right] (1) to (2);
\draw [right] (2) to (3);
\draw [right] (3) to (1);
\draw [-] (-0.5,0.55) arc (360:0:3pt);
\draw [-] (0.7,0.55) arc (360:0:3pt);
\endtikzpicture}
  & $[$& 1& 1& 0& 0;& 0& 3& 1& 0;& 4& 0& 0& $-1$;& 1& 0& 0;& $\left. 0\right]$ & $\left(4, 0, 0\right)$\\
& & $[$& $-1$& 1& 0& 0;& 0& 1& 1& 0;& $-1$& 0& 0& 1;& 1& 0& 0;& $\left. 0\right]$ & $\left(3, 1, 0\right)$\\
& & $[$& 1& 1& 0& 0;& 0& 1& 1& 0;& 1& 0& 0& 0;& 1& 0& 0;& $\left. 0\right]$ & $\left(3, 0, 1\right)$\\
& & $[$& $-1$& 1& 0& 0;& 0& $-1$& 1& 0;& $-1$& 0& 0& $-1$;& 1& 0& 0;& $\left. 0\right]$ & $\left(2, 2, 0\right)$\\
& & $[$& $-1$& 1& 0& 0;& 0& 0& 1& 0;& $-1$& 0& 0& 0;& 1& 0& 0;& $\left. 0\right]$ & $\left(2, 1, 1\right)$\\
& & $[$& 1& 1& 0& 0;& 0& 1& 1& 0;& 0& 0& 0& 0;& 1& 0& 0;& $\left. 0\right]$ & $\left(2, 0, 2\right)$\\
& & $[$& $-1$& 1& 0& 0;& 0& 0& 1& 0;& $-1$& 0& 0& 1;& 1& 0& 0;& $\left. 0\right]$ & $\left(1, 1, 2\right)$\\
& & $[$& 0& 1& 0& 0;& 0& 1& 1& 0;& 0& 0& 0& 0;& 1& 0& 0;& $\left. 0\right]$ & $\left(1, 0, 3\right)$\\
& & $[$& 0& 1& 0& 0;& 0& 0& 1& 0;& 0& 0& 0& 0;& 1& 0& 0;& $\left. 0\right]$ & $\left(0, 0, 4\right)$\\
\hline

\multirow{5}{*}{$\J_3$}
&   \multirow{5}{*}{\tikzpicture \phantom{\node (5) at (-0.25,0.28)[place]{};}
\node (1) at (-0.5,0.5)[place] {};
\node (2) at (0.5,0.5)[place] {};
\node (3) at (0.5,-0.5)[place] {};
\node (4) at (-0.5,-0.5)[place] {};
\draw [right] (3) to (4);
\draw [right] (4) to (1);
\draw [right] (1) to (2);
\draw [right] (2) to (3);
\draw [right] (1) to (3);
\draw [-] (-0.5,0.55) arc (360:0:3pt);
\draw [-] (0.7,-0.55) arc (360:0:3pt);
\endtikzpicture}
  & $[$& 3& $-1$& 4& 0;& 0& 0& 1& 0;& 0& 0& 1& 1;& 1& 0& 0;& $\left. 0\right]$ & $\left(4, 0, 0\right)$\\
& & $[$& $-1$& 1& $-1$& 0;& 0& 0& 1& 0;& 0& 0& 1& 1;& 1& 0& 0;& $\left. 0\right]$ & $\left(3, 1, 0\right)$\\
& & $[$& 1& 0& 1& 0;& 0& 0& 1& 0;& 0& 0& 1& 1;& 1& 0& 0;& $\left. 0\right]$ & $\left(3, 0, 1\right)$\\
& & $[$& $-1$& $-1$& $-1$& 0;& 0& 0& 1& 0;& 0& 0& $-1$& 1;& 1& 0& 0;& $\left. 0\right]$ & $\left(2, 2, 0\right)$\\
& & $[$& $-1$& 0& $-1$& 0;& 0& 0& 1& 0;& 0& 0& 0& 1;& 1& 0& 0;& $\left. 0\right]$ & $\left(2, 1, 1\right)$\\
& & $[$& 1& 0& 0& 0;& 0& 0& 1& 0;& 0& 0& 1& 1;& 1& 0& 0;& $\left. 0\right]$ & $\left(2, 0, 2\right)$\\
& & $[$& $-1$& 0& 0& 0;& 0& 0& 1& 0;& 0& 0& 1& 1;& 1& 0& 0;& $\left. 0\right]$ & $\left(1, 1, 2\right)$\\
& & $[$& 0& 0& 0& 0;& 0& 0& 1& 0;& 0& 0& 1& 1;& 1& 0& 0;& $\left. 0\right]$ & $\left(1, 0, 3\right)$\\
& & $[$& 0& 0& 0& 0;& 0& 0& 1& 0;& 0& 0& 0& 1;& 1& 0& 0;& $\left. 0\right]$ & $\left(0, 0, 4\right)$\\
\hline

\multirow{5}{*}{$\J_4$}
&   \multirow{5}{*}{\tikzpicture \phantom{\node (5) at (-0.25,0.28)[place]{};}
\node (1) at (-0.5,0.5)[place] {};
\node (2) at (0.5,0.5)[place] {};
\node (3) at (0.5,-0.5)[place] {};
\node (4) at (-0.5,-0.5)[place] {};
\draw [right] (3) to (4);
\draw [right] (4) to (1);
\draw [right] (1) to (2);
\draw [right] (2) to (3);
\draw [right] (2) to (4);
\draw [-] (-0.5,0.55) arc (360:0:3pt);
\draw [-] (0.7,-0.55) arc (360:0:3pt);
\endtikzpicture}
  & $[$& 1& 1& 0& 0;& 0& 0& 11& 4;& 0& 0& 3& 1;& 1& 0& 0;& $\left. 0\right]$ & $\left(4, 0, 0\right)$\\
& & $[$& $-1$& 1& 0& 0;& 0& 0& 0& $-1$;& 0& 0& 1& 1;& 1& 0& 0;& $\left. 0\right]$ & $\left(3, 1, 0\right)$\\
& & $[$& 1& 1& 0& 0;& 0& 0& 1& 1;& 0& 0& 1& 1;& 1& 0& 0;& $\left. 0\right]$ & $\left(3, 0, 1\right)$\\
& & $[$& $-1$& 1& 0& 0;& 0& 0& $-1$& $-1$;& 0& 0& $-1$& 1;& 1& 0& 0;& $\left. 0\right]$ & $\left(2, 2, 0\right)$\\
& & $[$& $-1$& 1& 0& 0;& 0& 0& $-1$& $-1$;& 0& 0& 1& 1;& 1& 0& 0;& $\left. 0\right]$ & $\left(2, 1, 1\right)$\\
& & $[$& 1& 1& 0& 0;& 0& 0& 0& 0;& 0& 0& 1& 1;& 1& 0& 0;& $\left. 0\right]$ & $\left(2, 0, 2\right)$\\
& & $[$& $-1$& 1& 0& 0;& 0& 0& 0& 0;& 0& 0& 1& 1;& 1& 0& 0;& $\left. 0\right]$ & $\left(1, 1, 2\right)$\\
& & $[$& 0& 1& 0& 0;& 0& 0& 0& 0;& 0& 0& 1& 1;& 1& 0& 0;& $\left. 0\right]$ & $\left(1, 0, 3\right)$\\
& & $[$& 0& 1& 0& 0;& 0& 0& 0& 0;& 0& 0& 0& 1;& 1& 0& 0;& $\left. 0\right]$ & $\left(0, 0, 4\right)$\\
\hline

\multirow{5}{*}{$\J_5$}
&   \multirow{5}{*}{\tikzpicture \phantom{\node (5) at (-0.25,0.28)[place]{};}
\node (1) at (-0.5,0.5)[place] {};
\node (2) at (0.5,0.5)[place] {};
\node (3) at (0.5,-0.5)[place] {};
\node (4) at (-0.5,-0.5)[place] {};
\draw [right] (4) to (3);
\draw [right] (4) to (1);
\draw [right] (1) to (2);
\draw [right] (3) to (2);
\draw [right] (2) to (4);
\draw [-] (-0.5,0.55) arc (360:0:3pt);
\draw [-] (0.7,0.55) arc (360:0:3pt);
\endtikzpicture}
  & $[$& 1& 3& 0& 0;& 0& 3& 0& 1;& 0& 1& 0& 0;& 1& 0& 1;& $\left. 0\right]$ & $\left(4, 0, 0\right)$\\
& & $[$& 1& $-1$& 0& 0;& 0& $-1$& 0& 1;& 0& $-1$& 0& 0;& 1& 0& 1;& $\left. 0\right]$ & $\left(3, 1, 0\right)$\\
& & $[$& 1& 1& 0& 0;& 0& 1& 0& 1;& 0& 0& 0& 0;& 1& 0& 1;& $\left. 0\right]$ & $\left(3, 0, 1\right)$\\
& & $[$& $-1$& $-1$& 0& 0;& 0& $-1$& 0& 1;& 0& $-1$& 0& 0;& 1& 0& 1;& $\left. 0\right]$ & $\left(2, 2, 0\right)$\\
& & $[$& $-1$& $-1$& 0& 0;& 0& 0& 0& 1;& 0& 0& 0& 0;& 1& 0& 1;& $\left. 0\right]$ & $\left(2, 1, 1\right)$\\
& & $[$& 1& 0& 0& 0;& 0& 1& 0& 1;& 0& 0& 0& 0;& 1& 0& 1;& $\left. 0\right]$ & $\left(2, 0, 2\right)$\\
& & $[$& $-1$& $-1$& 0& 0;& 0& 1& 0& 1;& 0& 1& 0& 0;& 1& 0& 1;& $\left. 0\right]$ & $\left(1, 1, 2\right)$\\
& & $[$& 0& $-1$& 0& 0;& 0& 1& 0& 1;& 0& 1& 0& 0;& 1& 0& 1;& $\left. 0\right]$ & $\left(1, 0, 3\right)$\\
& & $[$& 0& $-1$& 0& 0;& 0& 0& 0& 1;& 0& 1& 0& 0;& 1& 0& 1;& $\left. 0\right]$ & $\left(0, 0, 4\right)$\\
\hline

\multirow{5}{*}{$\J_6$}
&   \multirow{5}{*}{\tikzpicture \phantom{\node (5) at (-0.25,0.28)[place]{};}
\node (1) at (-0.5,0.5)[place] {};
\node (2) at (0.5,0.5)[place] {};
\node (3) at (0.5,-0.5)[place] {};
\node (4) at (-0.5,-0.5)[place] {};
\draw [right] (4) to (3);
\draw [right] (4) to (1);
\draw [right] (1) to (2);
\draw [right] (3) to (2);
\draw [right] (2) to (4);
\draw [-] (-0.5,0.55) arc (360:0:3pt);
\draw [-] (0.7,-0.55) arc (360:0:3pt);
\endtikzpicture}
  & $[$& 1& $-3/2$& 0& 0;& 0& 0& 0& 1;& 0& 11/2& 3& 0;& 1& 0& 1;& $\left. 0\right]$ & $\left(4, 0, 0\right)$\\
& & $[$& $-1$& $-1$& 0& 0;& 0& 0& 0& 1;& 0& 0& 1& 0;& 1& 0& 1;& $\left. 0\right]$ & $\left(3, 1, 0\right)$\\
& & $[$& 1& $-1$& 0& 0;& 0& 0& 0& 1;& 0& 2& 2& 0;& 1& 0& 1;& $\left. 0\right]$ & $\left(3, 0, 1\right)$\\
& & $[$& $-1$& $-1$& 0& 0;& 0& 0& 0& 1;& 0& $-1$& $-1$& 0;& 1& 0& 1;& $\left. 0\right]$ & $\left(2, 2, 0\right)$\\
& & $[$& $-1$& $-1$& 0& 0;& 0& 0& 0& 1;& 0& $-1$& 1& 0;& 1& 0& 1;& $\left. 0\right]$ & $\left(2, 1, 1\right)$\\
& & $[$& 1& $-1$& 0& 0;& 0& 0& 0& 1;& 0& 1& 1& 0;& 1& 0& 1;& $\left. 0\right]$ & $\left(2, 0, 2\right)$\\
& & $[$& $-1$& $-1$& 0& 0;& 0& 0& 0& 1;& 0& 1& 1& 0;& 1& 0& 1;& $\left. 0\right]$ & $\left(1, 1, 2\right)$\\
& & $[$& 0& 0& 0& 0;& 0& 0& 0& 1;& 0& 0& 1& 0;& 1& 0& 1;& $\left. 0\right]$ & $\left(1, 0, 3\right)$\\
& & $[$& 0& $-1$& 0& 0;& 0& 0& 0& 1;& 0& 1& 0& 0;& 1& 0& 1;& $\left. 0\right]$ & $\left(0, 0, 4\right)$\\
\hline
\end{longtable}
\normalsize

\end{document}